\numberwithin{equation}{section}
\numberwithin{figure}{section}
\theoremstyle{plain}
\newtheorem{thm}{\protect\theoremname}[section]
\theoremstyle{definition}
\newtheorem{defn}[thm]{\protect\definitionname}
\theoremstyle{definition}
\newtheorem{problem}[thm]{\protect\problemname}
\theoremstyle{remark}
\newtheorem{rem}[thm]{\protect\remarkname}
\theoremstyle{plain}
\newtheorem{cor}[thm]{\protect\corollaryname}
\theoremstyle{plain}
\newtheorem{lem}[thm]{\protect\lemmaname}
\theoremstyle{definition}
\newtheorem{example}[thm]{\protect\examplename}
\theoremstyle{plain}
\newtheorem{prop}[thm]{\protect\propositionname}
\numberwithin{equation}{section}
\newcommand{\Ric}{\operatorname{Ric}}
\newcommand{\C}{\mathbb{C}}
\newcommand{\R}{\mathbb{R}}
\newcommand{\bz}{\bar{z}}
\newcommand{\bw}{\bar{w}}
\newcommand{\Z}{\mathbb{Z}}
\newcommand{\del}{\partial}
\newcommand{\delb}{\bar{\partial}}
\newcommand{\ga}{\alpha}
\newcommand{\gb}{\beta}
\renewcommand{\bar}[1]{\overline{#1}}
\renewcommand{\i}{\sqrt{-1}}
\DeclareMathOperator{\Real}{Re}
\providecommand{\corollaryname}{Corollary}
\providecommand{\definitionname}{Definition}
\providecommand{\examplename}{Example}
\providecommand{\lemmaname}{Lemma}
\providecommand{\problemname}{Problem}
\providecommand{\propositionname}{Proposition}
\providecommand{\remarkname}{Remark}
\providecommand{\theoremname}{Theorem}
\begin{document}
\title[complex surfaces with split tangent]{On canonical metrics of complex surfaces with split tangent and related geometric PDEs }
\author{Hao Fang, Joshua Jordan}
\begin{abstract}
In this paper, we study bi-Hermitian metrics on complex surfaces with split holomorphic tangent bundle and construct 2 types of metric cones. We introduce a new type of fully non-linear geometric PDE on such surfaces and establish smooth solutions. As a geometric application, we solve the prescribed Bismut Ricci problem. In various settings, we obtain canonical metrics on 2 important classes of complex surfaces: primary Hopf surfaces and Inoue surfaces of type $\mathcal{S}_{M}$.
\end{abstract}

\date{\today}

\maketitle
\tableofcontents{}

\section{Introduction}

In this paper, we study Hermitian metrics on complex surfaces with
split tangent bundles. We begin by introducing a structural
condition that  is fundamental to our geometric setup.
\begin{defn}
\label{def:splittangent}A connected complex manifold $(M^{n},I)$ of dimension
$n$ with complex structure $I$ is said to have a \emph{split tangent
bundle, or split tangent, }if 
\[
T^{1,0}M=T^{+}\oplus T^{-},
\]
 where $T^{+}$ and $T^{-}$ are two non-trivial holomorphic sub-bundles. 
\end{defn}

Complex manifolds with split tangent have been extensively studied
in algebraic geometry. In particular, surface examples have
been completely classified by Beauville \cite{Beauville}, where their
universal coverings either have product structures or are Hopf surfaces.
Important examples include Inoue surfaces of type $\mathcal{S}_{M}$
and primary Hopf surfaces, which are two prototypical cases of class
VII manifolds in the Enriques-Kodaira classification theory of complex
surfaces.

In this paper, we explore  metric properties of
surfaces with split tangent. 

\subsection{Notations and setups}

We  begin by introducing some relevant notation and constructions that will be used in this
paper.

First, for a complex manifold $(M,I)$,
with $I$ being a complex structure, an $I$-compatible Hermitian
metric $h$ induces a non-degenerate differential form $\omega=h(I\cdot,\cdot)\in\Lambda^{1,1}(M)$,
which will be called the \emph{Hermitian form of $h$}. In a local
holomorphic coordinate, $\omega= \sqrt{-1}h_{i\bar{j}}dz^{i}\wedge d\bar{z}^{i}$,
where $(h_{i\bar{j}})$ is a positive Hermitian matrix. Through out
this paper, we fix the orientation of $M$ such that an $I$-compatible
Hermitian form $\omega$ is positive. 

For $M$ a complex manifold with split tangent, we define 
\[
\Lambda^{\pm\ p,q}(M)=\Lambda^{p}(T^{\pm,*}(M))\wedge\Lambda^{q}\bar{(T^{\pm,*}(M))}
\]

\begin{defn}
\label{def:metric}For $M$ satisfying Definition \ref{def:splittangent},
we define the space of all $(1,1)$ forms \emph{of split
type} to be
\[
\Lambda^{s}(M):=\{\eta=\eta^{+}+\eta^{-}|\ \eta^{+}\in\Lambda^{+\ 1,1}(M),\ \eta^{-}\in\Lambda^{-\ 1,1}(M)\}
\]
Furthermore, we refer to a Hermitian metric $h$ as \emph{of split
type} if its associated Hermitian form, $\omega$, is of split type.
Locally, we may write the components of a split-type fundamental form
as $\omega^{+}=\i h_{i\bar{j}}^{+}v^{i}\wedge\bar{v}^{j}$ and $\omega^{-}=\i h_{\alpha\bar{\beta}}^{-}w^{i}\wedge\bar{w}^{\beta}$,
where $\{v^{i}\}$ and $\{w^{\alpha}\}$ are bases for $T^{+,*}$
and $T^{-,*}$, respectively. For future use, we define an involution
$\iota:\Lambda^{s}(M)\to\Lambda^{s}(M)$ as 
\begin{equation}
\iota(\eta^{+}+\eta^{-})=\eta^{+}-\eta^{-}.\label{eq:involution-i}
\end{equation}
\end{defn}

Second, we impose  integrability conditions on the Hermitian form.
A Hermitian form $\omega$ is K\"ahler if $d\omega=0$. The K\"ahler
condition has significant topological and geometrical restrictions.
However, important examples of complex surfaces are known to be non-Kähler. In this paper, we first consider the larger class
of\emph{ pluriclosed} metrics instead. A differential form $\omega$ is called\emph{
pluriclosed} if 
\[
\sqrt{-1}\partial\bar{\partial}\omega=0.
\]
Note that we have $\sqrt{-1}\partial\bar{\partial}=\frac{-1}{2}dId$
as a real operator. A well-known result of Gauduchon \cite{gauduchon:nulle},
stated also as Theorem \ref{thm:Gauduchonmetric}, claims that any
Hermitian metric on a compact, complex surface is conformal to a pluriclosed
metric. Therefore, pluri-closedness is a mild constraint in the surface
case.

Third, we introduce a useful operator from bi-complex and generalized
K\"ahler geometry. Since $\partial$: $C^{\infty}(M)\to\Lambda^{1,0}(M)=\Lambda^{+\ 1,0}\oplus\Lambda^{-\ 1,0}$,
we apply proper projections to define $\partial_{\pm}:C^{\infty}(M)\to\Lambda^{\pm\ 1,0}$
such that $\partial=\partial_{+}+\partial_{-}$ \cite{ag:gkwithsplit}
. We have a similar decomposition for $\delb$.
\begin{defn}
\cite[Lemma 7.77]{gfs} For $u\in C^{\infty}(M)$, the \emph{box operator}
$\Box:C^{\infty}(M)\to\Lambda^{s}(M)$ is defined by
\[
\Box:=\i(\del_{+}\delb_{+}-\del_{-}\delb_{-}).
\]
\end{defn}

Fourth, following Streets \cite[c.f. Definition 7.3, 7.4]{s:borninfeld},
we introduce a cohomology group in which important geometric quantities
live. A direct computation shows that $\sqrt{-1}\partial\bar{\partial}\circ\Box u=0$,
which leads to the following chain map:

\begin{equation}
0\to C^{\infty}(M)\overset{\Box}{\longrightarrow}\Lambda^{s}(M)\overset{\i\del\delb}{\longrightarrow}\Lambda^{2,2}(M).\label{eq:add010}
\end{equation}

\begin{defn}
\label{def:splittypeclass}Notation as above. We define the \emph{split
type cohomology group} 
\end{defn}

\[
\mathcal{H}(M)=\{\phi\in\Lambda^{s}(M)|\ \sqrt{-1}\del\delb\phi=0\}/\{\Box u|u\in C^{\infty}(M)\},
\]

and its associated \emph{positive cone} $\mathcal{P\subset\mathcal{H}}$
as

\[
\mathcal{P}(M)=\{[\phi]\in\mathcal{H\ }|\ \exists\ \omega\in[\phi]\text{ s.t. }\omega>0\}.
\]

Fifth, we introduce a space of metric classes following Streets \cite{s:borninfeld}. Whereas, in the K\"ahler
manifold case, any two K\"ahler metrics within the same K\"ahler
class differ by $\sqrt{-1}\del\delb u$, for manifolds with split
tangent bundle, it is natural to study metrics of split type differing
by $\Box u$. More concretely, for a fixed Hermitian metric $\omega_{0}$,
we are interested in metrics of the following form
\[
\omega=\omega_{0}+\Box u>0.
\]
Therefore, the positive cone $\mathcal{P}$ in Definition \ref{def:splittypeclass}
replaces K\"ahler cone in K\"ahler geometry under our setup.

Last, we point out that there exists a dual geometric construction.
For a complex surface $(M,I)$ with split tangent, note that $\Box$
may also be defined for general differential forms instead of smooth
functions. We may then consider \emph{$\Box$-closed Hermitian metrics}.
Notice that similar to (\ref{eq:add010}), for any $u\in C^{\infty}(M)$
and letting $\pi:\Lambda^{1,1}(M)\to\Lambda^{s}(M)$ be the natural
projection,
\[
\Box(\pi(\sqrt{-1}\partial\bar{\partial}u))=0.
\]
We have the following:
\begin{defn}
Notation as above. We define \emph{the second split type cohomology
group} 
\end{defn}

\[
\mathcal{H}'(M)=\{\phi\in\Lambda^{s}(M)|\ \Box\phi=0\}/\{\pi(\sqrt{-1}\partial\bar{\partial}u)|u\in C^{\infty}(M)\},
\]

and the associated \emph{positive cone} $\mathcal{P}'\subset\mathcal{H}'$ 

\[
\mathcal{P}'(M)=\{[\phi]\in\mathcal{H}'\ |\ \exists\ \omega\in[\phi]\text{ s.t. }\omega>0\}.
\]

\subsection{Structure of cohomology and a uniformization theorem}

Now we state the first result of our paper as follows, generalizing a result of Streets and Ustinovskiy \cite[Lemma 5.3]{su:gkrsolitons}:
\begin{thm}
\label{thm: dimension}For a complex surface with split tangent, and
notation set as above, we have $\dim\mathcal{H}=2.$ Also, we have
$\dim\mathcal{H}'=2.$ 
\end{thm}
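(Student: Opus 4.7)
The plan is a Hodge-theoretic argument that exploits two structural features: the real line bundles $\Lambda^{\pm,1,1}(M) = L^{\pm}\otimes\bar{L}^{\pm}$ (with $L^\pm := (T^\pm)^*$) are topologically trivial, and the second-order operators $\sqrt{-1}\partial_+\bar\partial_+$ and $\sqrt{-1}\partial_-\bar\partial_-$ commute. Fix any smooth split-type Hermitian metric $\omega_0 = \omega_0^+ + \omega_0^-$ on $M$; its positive components $\omega_0^\pm$ give global trivializations $\Lambda^s(M) \cong C^\infty(M)^{\oplus 2}$ via $(f, g) \leftrightarrow f\omega_0^+ + g\omega_0^-$ and $\Lambda^{2,2}(M) \cong C^\infty(M)$ via $\omega_0^+ \wedge \omega_0^-$. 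Using the vanishing of $\Lambda^{\pm, p, q}$ on a surface whenever $p$ or $q$ exceeds $1$, one identifies (to leading order) the chain map (\ref{eq:add010}) with
\[
\Box u \longleftrightarrow (L_+ u, -L_- u), \qquad \sqrt{-1}\partial\bar\partial(f, g) \longleftrightarrow L_- f + L_+ g,
\]
where $L_\pm$ are the leaf Laplacians along the $T^\pm$-foliations, commuting by the integrability (Frobenius) of $T^\pm$. A direct symbol computation then gives
\[
\Box\Box^* + (\sqrt{-1}\partial\bar\partial)^*(\sqrt{-1}\partial\bar\partial) = (|\xi^+|^4 + |\xi^-|^4)\cdot\mathrm{Id}
\]
on $\Lambda^s$, strictly positive for $\xi \neq 0$ since $T^{1,0}M = T^+\oplus T^-$; similarly $\Box^*\Box$ is elliptic on $C^\infty$. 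Hence $\mathcal{H}$, $H^0 := \ker\Box$, and $H^2 := \Lambda^{2,2}(M) / \mathrm{image}(\sqrt{-1}\partial\bar\partial|_{\Lambda^s})$ are all finite-dimensional.

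Next, I would directly compute $H^0 = H^2 = \mathbb{R}$. For $H^0$: if $\Box u = 0$ then $L_+ u = L_- u = 0$, so $(L_+ + L_-) u = 0$; since $L_+ + L_-$ is elliptic on $M$ with only constants in its kernel (maximum principle, using $T^+\oplus T^- = TM$), this forces $u$ to be constant. The statement $H^2 = \mathbb{R}$ follows either directly (solve $L_- f + L_+ g = h$ by taking $f = g = (L_+ + L_-)^{-1} h$, with compatibility $\int_M h\cdot\omega_0^+\wedge\omega_0^- = 0$ via Fredholm alternative) or by the self-duality of the complex under the pairings $(\phi, \sigma) \mapsto \int_M \phi\wedge\iota(\sigma)$ on $\Lambda^s$ and integration on $C^\infty \times \Lambda^{2,2}$, verified by the Stokes-type identity
\[
\int_M \Box u\wedge\iota(\sigma) = -\int_M u \cdot \sqrt{-1}\partial\bar\partial\,\sigma.
\]

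Finally, to conclude $\dim\mathcal{H} = 2$, I would invoke the joint spectral decomposition of the commuting essentially self-adjoint pair $(L_+, L_-)$, obtained by diagonalizing them simultaneously on each finite-dimensional eigenspace of the elliptic operator $L_+ + L_-$. On each joint eigenspace $V_{(\lambda,\mu)}$, mode-by-mode analysis shows that for $(\lambda,\mu)\neq (0,0)$ the kernel of $\sqrt{-1}\partial\bar\partial$ and the image of $\Box$ coincide (contributing zero), whereas on $V_{(0,0)} = \mathbb{R}$ both conditions are trivial and yield exactly $\mathbb{R}^2$. The main obstacle is rigorously handling the lower-order correction terms in $L_\pm$ coming from the curvature of the trivializing sections $\omega_0^\pm$ (which prevent exact self-adjointness with respect to the background $L^2$ inner product): one addresses this either by choosing $\omega_0$ with $\omega_0^\pm$ individually pluriclosed (a Gauduchon-type existence statement for each line bundle $\Lambda^{\pm,1,1}$) or by invoking stability of finite-dimensional cohomology under lower-order perturbations of elliptic operators. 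The claim $\dim\mathcal{H}' = 2$ follows by an analogous argument, or equivalently via the explicit isomorphism $\mathcal{H} \cong \mathcal{H}'$ induced by $(f, g) \mapsto (f, -g)$ in the trivialization above.
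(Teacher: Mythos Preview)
Your ellipticity argument is correct and yields finite-dimensionality; your computation $H^{0}=\R$ via the maximum principle and $H^{2}=\R$ via the duality identity $\int_{M}\Box u\wedge\iota(\sigma)=-\int_{M}u\,\i\del\delb\sigma$ are both sound. The genuine gap is the final step, the joint spectral decomposition of $(L_{+},L_{-})$. The scalar leaf Laplacians do \emph{not} commute in general: in adapted local coordinates $(z,w)$ one has $L_{+}u=g^{-1}u_{z\bar z}$ and $L_{-}u=h^{-1}u_{w\bar w}$ with $g,h$ depending on \emph{all} variables, so $[L_{+},L_{-}]$ involves $\del_{w}g$, $\del_{z}h$, etc.; Frobenius integrability only makes the form-valued operators $\i\del_{+}\delb_{+}$ and $\i\del_{-}\delb_{-}$ commute, not their trivialized scalar versions. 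Likewise $L_{\pm}$ are not self-adjoint for $\omega_{0}^{+}\wedge\omega_{0}^{-}$ unless $h_{z}=g_{w}=0$, which fails precisely on the Hopf and Inoue surfaces of interest. Your two proposed fixes do not close this: asking for $\omega_{0}^{\pm}$ individually pluriclosed only gives $g_{w\bar w}=0$, not $g_{w}=0$, and ``stability under lower-order perturbations'' preserves the \emph{index} of the elliptic complex but not the simultaneous diagonalizability your mode-by-mode argument needs.

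The paper takes a completely different and more elementary route. It observes that your duality pairing $\{\eta,\gamma\}=\int_{M}\iota(\eta)\wedge\gamma$ descends to $\mathcal{H}$ and proves (Theorem~\ref{t:perpcriterion}) that for $[\omega_{2}]\in\mathcal{P}$ the vanishing $\{[\omega_{1}],[\omega_{2}]\}=0$ forces $[\omega_{1}]=c[\omega_{2}]$, by rewriting this as solvability of a Chern--Poisson equation against the Gauduchon metric $\omega_{2}$. It then exhibits two positive classes $[\omega_{0}],[\omega_{1}]$ with $\{\omega_{1},\omega_{0}\}\neq 0$ and, for arbitrary $[\omega]$, finds $a,b$ with $b\omega_{1}+\omega_{0}>0$ and $\{a\omega_{1}+\omega,\,b\omega_{1}+\omega_{0}\}=0$, whence $[\omega]\in\mathrm{span}([\omega_{0}],[\omega_{1}])$. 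No spectral theory is used, and the proportionality criterion is reused throughout the paper. Your reduction of $\dim\mathcal{H}'=2$ to $\dim\mathcal{H}=2$ via $\iota$ matches Lemma~\ref{lem:involution}.
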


The second part of Theorem \ref{thm: dimension} is a direct
consequence of its first part, due to the involution constructed in
Definition \ref{def:metric} (see Lemma \ref{lem:involution} for
details). The first part of Theorem \ref{thm: dimension} indicates
that $\mathcal{H},$ which is the tangent space of $\mathcal{P},$
is finite dimensional. Similar results are not known in general non-K\"ahler
Hermitian geometry. Theorem \ref{thm: dimension} indicates our set
up may be viewed as a proper analogue of K\"ahler geometry. For the
proof of Theorem \ref{thm: dimension}, we introduce a simple algebraic
criteria for linear dependence within $\mathcal{H}$. We explore a
linear elliptic PDE of split type to prove Theorem \ref{thm: dimension}
where the dimension being 2, as well as the pluriclosed condition,
plays an important role. 

We use Theorem \ref{thm: dimension} to show a surprising uniformization theorem for pluriclosed bi-Hermitian metrics on Hopf surfaces. 

\begin{defn}[\cite{Kodaira+1975+1471+1510}]
\label{def:Hopf}A primary diagonal Hopf surface $H_{a,b}$, where $\alpha=\Real a>0$, $\beta={\Real b>0}$,
is defined as
\[
H=H_{a,b}=\{(z,w)\in\mathcal{\mathbb{C}}^{2}\backslash(0,0)\}/\sim,
\]
where $(z,w)\sim(e^a z,e^{b}z)$ is an equivalence relation.
\end{defn}
 In \cite{su:gkrsolitons}, Streets-Ustinovskiy constructed a family of Bismut-Ricci soliton metrics, denoted as $\omega^{SU}_t=\omega_t\in \Lambda^s(H_{a,b})$ with $t\in\R$, which are holomophically and metrically equivalent to each other.   For future use, we define \begin{equation}\label{omegaprime2}
     \omega'_t=\frac{d}{dt} \omega^{SU}_t.
 \end{equation} See Section 3 for more details.

 \begin{thm}[Uniformization for Hopf surfaces]Notations as above, we have
    
\label{uniformization}  
\begin{equation}
\mathcal{P}(H_{a,b})=\{[s\omega_{t}],\ s>0,\ t\in\mathbb{R}\}.\label{eq:add123}
\end{equation}
\begin{equation}
    \mathcal{P}'(H_{a,b})=\emptyset.\label{eq:add1234}
\end{equation}
Equivalently, for any bi-Hermitian pluriclosed metric $\omega$ on $H_{a.b}$, there exists a smooth function $u\in C^\infty(H_{a,b})$ such that $\omega_u=\omega+\Box u=s \omega^{SU}_{t}$, where $s>0$. 
\end{thm}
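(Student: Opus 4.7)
The plan exploits Theorem~\ref{thm: dimension}: since $\dim\mathcal{H}(H_{a,b})=2$ and the Streets--Ustinovskiy family with positive scalings already gives a $2$-parameter family in $\mathcal{P}$, the claim reduces to showing that the parametrization $\Phi:(0,\infty)\times\mathbb{R}\to\mathcal{H}(H_{a,b})$, $\Phi(s,t)=s[\omega_{t}^{SU}]$, is a bijection onto $\mathcal{P}(H_{a,b})$.

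I would first verify that $\{[\omega_{0}^{SU}],\,[\omega'_{0}]\}$ is a basis of $\mathcal{H}(H_{a,b})$.  Using the explicit $U(1)^{2}$-invariant description of $\omega_{t}^{SU}$ on $H_{a,b}$ from \cite{su:gkrsolitons} together with the algebraic linear-dependence criterion developed in the proof of Theorem~\ref{thm: dimension}, one rules out any relation $A\omega_{0}^{SU}+B\omega'_{0}=\Box u$ with $(A,B)\neq(0,0)$; combined with $\dim\mathcal{H}=2$ this forces a basis.  The differential of $\Phi$ at $(s,t)$ is spanned by $[\omega_{t}^{SU}]$ and $s[\omega'_{t}]$, which remain linearly independent for all $t$ by smooth dependence (or by equivariance under the natural $\mathbb{C}^{*}$-action on $H_{a,b}$).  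Hence $\Phi$ is an immersion from a $2$-manifold into a $2$-dimensional target, and its image is open and contained in $\mathcal{P}(H_{a,b})$.  For surjectivity, I would take $[\phi]\in\mathcal{P}(H_{a,b})$ with basis coordinates $(A,B)$ in $\{[\omega_{0}^{SU}],[\omega'_{0}]\}$ and invert the equation $s[\omega_{t}^{SU}]=A[\omega_{0}^{SU}]+B[\omega'_{0}]$; the soliton rescaling structure of the SU family should give an explicit solution with $s>0$ precisely when $[\phi]\in\mathcal{P}$.  Once $(s,t)$ is determined, the class identity produces $u\in C^{\infty}(H_{a,b})$ with $\phi=s\omega_{t}^{SU}+\Box u$, yielding (\ref{eq:add123}) and the ``equivalently'' statement.

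For (\ref{eq:add1234}), I would invoke the involution $\iota$ from Definition~\ref{def:metric} and the induced isomorphism $\mathcal{H}\cong\mathcal{H}'$ from Lemma~\ref{lem:involution}, which interchanges $[\omega^{+}+\omega^{-}]$ with $[\omega^{+}-\omega^{-}]$.  By (\ref{eq:add123}), every class in $\mathcal{H}'$ then has a representative of the form $s(\omega_{t}^{+,SU}-\omega_{t}^{-,SU})+\pi(\sqrt{-1}\del\delb u)$, whose $T^{-}$ component is $-s\,\omega_{t}^{-,SU}+\sqrt{-1}\del_{-}\delb_{-}u$; evaluating at a maximum of $u$ (where $\sqrt{-1}\del_{-}\delb_{-}u\le 0$ on $T^{-}$) makes this component non-positive, contradicting positivity of the full representative.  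Hence $\mathcal{P}'(H_{a,b})=\emptyset$.

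The main obstacle is the surjectivity claim in the second paragraph: local immersivity is cheap, but verifying that the SU curve genuinely fills the open positive cone inside $\mathcal{H}\cong\mathbb{R}^{2}$ up to scaling requires an explicit inversion of $(s,t)\mapsto s[\omega_{t}^{SU}]$, which must be carried out in coordinates via the soliton rescaling structure.
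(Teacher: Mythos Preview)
Your plan for \eqref{eq:add123} is close in spirit to the paper's, but the paper bypasses the immersion/inversion step entirely.  The key computation is Lemma~\ref{integral}: one evaluates the bracket $\{\cdot,\cdot\}$ of Definition~\ref{def:bracket} explicitly to obtain $\{\omega_t,\omega\}=ct$ and $\{\omega_t,\omega'\}=c$.  Combined with Theorem~\ref{t:perpcriterion} this gives the \emph{linear} identity $[\omega_t]=[\omega]+t[\omega']$ in $\mathcal{H}$, so that $\tilde{\mathcal{P}}:=\{[s\omega_t]:s>0\}$ is exactly the open half-plane $\{p[\omega]+q[\omega']:p>0\}$.  No ``soliton rescaling'' or inversion is needed.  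The remaining step is to show $\mathcal{P}=\tilde{\mathcal{P}}$, and here the paper uses a different mechanism than your surjectivity argument: the complement of $\tilde{\mathcal{P}}\cup(-\tilde{\mathcal{P}})$ in $\mathcal{H}$ is the line $\mathbb{R}[\omega']$, and since $(\omega')^+\ge 0$, $(\omega')^-\le 0$ one has $\mathbb{R}[\omega']\subset\mathcal{H}^-$; Lemma~\ref{nomix} then forces $\mathcal{P}\cup(-\mathcal{P})=\mathcal{H}^+\subset\tilde{\mathcal{P}}\cup(-\tilde{\mathcal{P}})$, and disjointness of $\mathcal{P}$ and $-\mathcal{P}$ finishes it.

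Your argument for \eqref{eq:add1234} has a genuine gap.  The assertion ``every class in $\mathcal{H}'$ has a representative of the form $s(\omega_t^{+}-\omega_t^{-})+\pi(\sqrt{-1}\del\delb u)$'' is false: under $\iota$ this would say every class in $\mathcal{H}$ equals some $s[\omega_t]$, but since $[\omega_t]=[\omega]+t[\omega']$ the image of $(s,t)\mapsto s[\omega_t]$ misses the punctured line $\{q[\omega']:q\neq 0\}$.  Your maximum-principle argument (which is essentially Lemma~\ref{nomix}) correctly excludes $\iota(\pm\mathcal{P})$ from $\mathcal{P}'$, but you still owe the case $[\iota(q\omega')]$.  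For this one cannot use a global extremum of $u$, because $\iota(\omega')$ is only positive \emph{semi}-definite: from \eqref{omegaprime} one checks $(\omega')^{+}$ vanishes along the elliptic curve $\{w=0\}$ and $(\omega')^{-}$ along $\{z=0\}$.  The clean way to finish is to restrict to $E=\{w=0\}$: if $q\iota(\omega')+\pi(\sqrt{-1}\del\delb u)>0$ then on $E$ one would need $\sqrt{-1}\del_+\delb_+u>0$, i.e.\ $u|_E$ strictly subharmonic on a compact Riemann surface, which is impossible.
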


Theorem \ref{uniformization} can be compared to the standard uniformization theorem for $\mathbb S^2$, where any complex metric can be deformed to the standard round metric up to a conformal map. By Theorem \ref{uniformization},  one may use a conformal factor to move  any bi-Hermitian metric on $H_{a,b}$ to a pluriclosed metric, which can then moved by an additional $\Box u$  term to the standard Streets-Ustinovskiy metric. Note that, even for the regular Hopf surface case, where $\alpha=\beta$, Theorem \ref{uniformization} is new. 

In addition, we also have a complete characterization of the metric cone.
\begin{thm} \label{Pstructure}
For $H_{a,b}$,
    $[\Omega]\in \mathcal{P}$ if and only if for any pluriclosed form $\Omega\in[\Omega]$
    $$ \int_H \iota (\omega'_0)\wedge \Omega>0, $$where $\iota $ is the involution given in (\ref{eq:involution-i}) and $\omega'_0$ is defined in (\ref{omegaprime2}). See also  (\ref{omegaprime}) for its explicit form. 
\end{thm}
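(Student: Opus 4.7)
The plan is to use Theorem \ref{uniformization} and $\dim \mathcal{H} = 2$ from Theorem \ref{thm: dimension} to reduce the statement to finite-dimensional linear algebra plus a single explicit sign computation. First I would introduce the bilinear pairing
\[
Q\colon\mathcal{H}\times\mathcal{H}\to\R,\qquad Q([\eta],[\Omega])=\int_H \iota(\eta)\wedge\Omega,
\]
and verify that it descends to cohomology. The key input is that in complex dimension two with split tangent each factor $T^{\pm}$ has rank one, so (i) $\eta^{\pm}\wedge\alpha^{\pm}=0$ whenever $\alpha^{\pm}\in\Lambda^{\pm\,1,1}$, and (ii) mixed terms such as $\eta^{+}\wedge\del_{+}\delb_{-}u$ vanish by type. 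Combined with the pluriclosed identity $\i\del\delb\eta=0$ and integration by parts, these imply $\int_H\iota(\eta)\wedge\Box u=0$ and $\int_H\iota(\Box v)\wedge\Omega=0$, so $Q$ is independent of the pluriclosed representative on either side. The same wedge-type accounting, together with the commutativity of $(1,1)$-forms, gives $\iota(\eta)\wedge\Omega=-\iota(\Omega)\wedge\eta$ pointwise, so $Q$ is antisymmetric.

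Since $Q$ is antisymmetric on the 2-dimensional space $\mathcal{H}$, it is either identically zero or a nondegenerate symplectic form up to scale. Taking $[\omega_0]$ and $[\omega'_0]$ as a candidate basis, Theorem \ref{uniformization} ensures the curve $t\mapsto[\omega^{SU}_t]$ is nontrivial, so its initial tangent $[\omega'_0]$ is independent of $[\omega_0]$. Using the explicit form of $\omega'_0$ in (\ref{omegaprime}) together with the pointwise identity $\omega_0^{+}\wedge\omega_0^{-}=\tfrac12\omega_0^{2}>0$, one computes
\[
Q([\omega'_0],[\omega_0])=\int_H\bigl((\omega'_0)^{+}\wedge\omega_0^{-}-(\omega'_0)^{-}\wedge\omega_0^{+}\bigr),
\]
and reads off strict positivity of its sign directly from the coefficients in $a,b$.

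With these structural facts in place, expand $[\omega^{SU}_t]=\lambda(t)[\omega_0]+\mu(t)[\omega'_0]$ in this basis; then $\lambda(0)=1$, $\mu(0)=0$, $\lambda'(0)=0$, $\mu'(0)=1$. Antisymmetry of $Q$ together with $Q([\omega'_0],[\omega'_0])=0$ yields
\[
Q([\omega'_0],[s\omega^{SU}_t])=s\,\lambda(t)\,Q([\omega'_0],[\omega_0]),
\]
so the forward inclusion $\mathcal{P}\subset\{[\Omega]:Q([\omega'_0],[\Omega])>0\}$ is equivalent to $\lambda(t)>0$ for all $t\in\R$, which I would verify directly from the Streets--Ustinovskiy family recalled in Section 3. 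The converse is then dimensional: both $\mathcal{P}$ and the half-plane $\{Q([\omega'_0],\cdot)>0\}$ are open cones in $\mathcal{H}\cong\R^{2}$, and $\mathcal{P}$ is already shown to sit inside the half-plane; once the direction map $t\mapsto\mu(t)/\lambda(t)\colon\R\to\R$ is seen to be surjective, the parameterization $(s,t)\mapsto [s\omega^{SU}_t]$ sweeps out all of the half-plane, giving equality. The main obstacle is the explicit identification of $\lambda(t)$ and the sign of $Q([\omega'_0],[\omega_0])$ in terms of $a,b$ from (\ref{omegaprime}); everything else is either formal or follows directly from Theorem \ref{uniformization}.
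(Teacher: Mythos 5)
Your proposal is, in essence, the paper's own argument, but you rebuild several ingredients from scratch that the paper has already prepared, and you leave as a ``main obstacle'' precisely the computation that makes the proof short. The pairing $Q$ you introduce is, after unwinding the involution, nothing but the bracket $\{\eta,\gamma\}=\int_M\iota(\eta)\wedge\gamma$ of Definition~\ref{def:bracket}; its well-definedness on $\mathcal{H}$-classes and its antisymmetry are established there and in Theorem~\ref{t:perpcriterion}, so your wedge-type accounting, while correct, repeats known material. More importantly, the proof of Theorem~\ref{uniformization} already contains the identity $[\omega_t]=[\omega+t\omega']$ (equation~(\ref{beijing5}), derived from Lemma~\ref{integral}), which is exactly the statement that your unknown coefficients satisfy $\lambda(t)\equiv 1$ and $\mu(t)=t$. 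Once you use this, both of your open questions --- that $\lambda(t)>0$ for all $t$, and that $t\mapsto\mu(t)/\lambda(t)$ is surjective --- are immediate, and the paper's two-line argument ($\mathcal{P}=\{p[\omega]+q[\omega']:p>0\}$, with $p$ read off by pairing against $[\omega']$) falls out. So the ``main obstacle'' you flag is actually resolved in Section~\ref{sec:Examples} and should be cited rather than re-derived.

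One step in your sketch is also not quite rigorous as stated: ``Theorem~\ref{uniformization} ensures the curve $t\mapsto[\omega^{SU}_t]$ is nontrivial, so its initial tangent $[\omega'_0]$ is independent of $[\omega_0]$.'' Nontriviality of a curve does not by itself give linear independence of the curve's value and its velocity at a single point; one could in principle have $[\omega'_0]=c[\omega_0]$ with the curve nonconstant for other reasons. The correct justification is the nonvanishing of the bracket, $\{\omega_0,\omega'_0\}\neq0$, which follows from Lemma~\ref{integral} together with Theorem~\ref{t:perpcriterion} (since $\{\omega_0,c\omega_0\}=0$ for any $c$). If you replace that heuristic with the bracket computation, and invoke (\ref{beijing5}) and Lemma~\ref{integral} instead of treating $\lambda,\mu$ as unknowns, your argument becomes a correct --- if somewhat longer --- version of the paper's proof.
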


Theorem \ref{Pstructure} gives a satisfactory intersection-theoretic criterion to determine the bi-Hermitian metric cone, as $\omega'_0$ is an explicit differential form.

\subsection{Prescribing Bismut Ricci}
Next, we discuss the related curvature problem. Note that in the K\"ahler
case, both K\"ahler metrics and their corresponding Kähler Ricci forms live
in $H_{\mathbb{R}}^{1,1}$ of a Kähler manifold. For manifolds with
split tangent, when metrics of split type are considered, both metrics
and their corresponding Bismut Ricci forms live in the split type
cohomology group $\mathcal{H}$ \cite[Lemma 7.77, Proposition 8.20]{gfs}.
More precisely, from any metric $\omega$ of split type, the $(1,1)$
component of its Bismut Ricci form of first type, denoted as $\Ric_{B}^{1,1}$, defines a class
in $[\Ric_{B}^{1,1}]\in\mathcal{H}$ which is independent of $\omega,$
and may be seen as the projection of $c_{1}(TM)\in H^{2}(M)$ into
$\mathcal{H}.$ In particular, following a general result of Streets
\cite[Proposition 8.20]{gfs}, we have 
\begin{equation}
\Ric_{B}^{1,1}(\omega)=-\Box(\log\det\omega^{+}-\log\det\omega^{-}),\label{eq:add012}
\end{equation}
where $\det\omega^{\pm}$ is computed with a local holomorphic coordinate.
See Section 2 for more details. 

Noting the strong similarity between (\ref{eq:add012}) and the well
known Käher-Ricci formula, Streets \cite{s:pcfongkwithsplit} poses the following Calabi problem as
an analogue of the Calabi problem in the Kähler geometry, which was
settled by Yau in his celebrated work \cite{yau:cyproof}. In dimension 2, the problem can be stated as
\begin{problem}
\label{prob:CalabiProblem} Notations as above. For a fixed $[\omega_{0}]\in\mathcal{P},$
and any $\rho\in[\Ric_{B}^{1,1}(\omega_{0})]\in\mathcal{H}$, find a smooth
$\omega_{u}=\omega_{0}+\Box u>0,$ such that $\Ric_{B}^{1,1}(\omega_{u})=\rho$. 
\end{problem}
In this paper, we are able to answer this question in the dimension 2 case.

\begin{thm}
\label{thm:1}If $M$ is a compact, complex surface with split tangent,
and notations are as above. For any fixed $[\omega_{0}]\in\mathcal{P},$
and any $\rho\in[\Ric_{B}^{1,1}]$, there exists a smooth $\omega_{u}=\omega_{0}+\Box u>0,$
such that $\Ric_{B}^{1,1}(\omega_{u})=\rho$. 
\end{thm}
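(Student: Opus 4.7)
The plan is to reduce the prescribed Bismut--Ricci problem to a scalar fully nonlinear elliptic PDE of split type, and then solve that PDE by the continuity method. Since $\rho$ and $\Ric_B^{1,1}(\omega_0)$ represent the same class in $\mathcal{H}$, we may write $\rho-\Ric_B^{1,1}(\omega_0)=\Box f$ for some $f\in C^\infty(M)$. Using formula (\ref{eq:add012}), the target equation $\Ric_B^{1,1}(\omega_u) = \rho$ will follow from the stronger scalar relation
\[
\log\frac{\det\omega_u^+}{\det\omega_0^+} - \log\frac{\det\omega_u^-}{\det\omega_0^-} = -f + c,
\]
for some constant $c\in\R$, since applying $-\Box$ to both sides recovers $\Ric_B^{1,1}(\omega_u) - \Ric_B^{1,1}(\omega_0) = \Box f$. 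Dimension two enters essentially here: because $T^{\pm}$ are both holomorphic line bundles, we have globally $\omega_u^{+} = (1+\Delta_+u)\,\omega_0^{+}$ and $\omega_u^{-} = (1-\Delta_-u)\,\omega_0^{-}$, where $\Delta_{\pm} := \tr_{\omega_0^{\pm}}(\i\del_{\pm}\delb_{\pm})$. The problem thus reduces to the scalar PDE
\begin{equation}\label{eq:newPDE}
\log(1+\Delta_+u) - \log(1-\Delta_-u) = -f + c,
\end{equation}
subject to the positivity constraints $1+\Delta_+u>0$, $1-\Delta_-u>0$, which together are equivalent to $\omega_u>0$.

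Equation (\ref{eq:newPDE}) is precisely the new fully nonlinear geometric PDE whose smooth solvability is the central analytic contribution of the paper (as advertised in the abstract). The constant $c$ is not free: integrating (\ref{eq:newPDE}) against the volume form $\omega_0\wedge\omega_0 = 2\,\omega_0^{+}\wedge\omega_0^{-}$ produces a compatibility condition that uniquely fixes $c$ in terms of $f$ and $\omega_0$. We then run the continuity method along
\[
\log(1+\Delta_+u_t) - \log(1-\Delta_-u_t) = -tf + c_t,\qquad t\in[0,1],
\]
with $u_0\equiv 0$ and $c_0=0$. The linearization at a solution $u_t$ is
\[
\phi\ \longmapsto\ \frac{\Delta_+\phi}{1+\Delta_+u_t} + \frac{\Delta_-\phi}{1-\Delta_-u_t},
\]
an elliptic operator with strictly positive coefficients along the path while positivity is preserved; the constant $c_t$ absorbs the one-dimensional cokernel and the normalization $\int_M u_t\,\omega_0\wedge\omega_0 = 0$ pins down $u_t$, so openness follows from the implicit function theorem. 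Closedness follows from uniform $C^{2,\alpha}$ a priori estimates for (\ref{eq:newPDE}). Finally, a smooth $u$ solving (\ref{eq:newPDE}) gives the desired positive split-type metric $\omega_u = \omega_0 + \Box u$ with $\Ric_B^{1,1}(\omega_u) = \rho$.

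The main obstacle is the closedness step, i.e.\ uniform a priori estimates for (\ref{eq:newPDE}). The operator differs from a complex Monge--Amp\`ere equation in two essential ways: it is a difference of two logarithms rather than a single $\log\det$, hence is \emph{not} concave in its natural second-derivative variables, ruling out a direct application of Evans--Krylov; and its ellipticity is only partial, since $\Delta_+$ sees only the $T^+$-directions and $\Delta_-$ only the $T^-$-directions. Deriving global $C^2$ bounds therefore requires mixing information across the splitting $T^{1,0}M=T^+\oplus T^-$, and is expected to use the pluriclosedness of $\omega_0$ and the integrability of $T^{\pm}$ in an essential way. This is the technical heart of the paper's earlier PDE existence theorem, which is invoked here to conclude.
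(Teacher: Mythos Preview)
Your reduction of the prescribed Bismut--Ricci problem to the scalar equation
\[
\log\frac{\omega_u^+}{\omega_0^+}-\log\frac{\omega_u^-}{\omega_0^-}=-f+c
\]
is exactly what the paper does, and this is the correct starting point. However, you have substantially mischaracterized the analytic content of this equation. Exponentiate both sides: the equation becomes
\[
\omega_u^+\wedge\omega_0^- = e^{-f+c}\,\omega_u^-\wedge\omega_0^+,
\]
which, after expanding $\omega_u^\pm=\omega_0^\pm\pm\i\del_\pm\delb_\pm u$, is a \emph{linear} Chern--Poisson equation
\[
\i\del\delb u\wedge\tilde\omega=(e^{-f+c}-1)\,\omega_0^+\wedge\omega_0^-,\qquad \tilde\omega:=e^{-f+c}\omega_0^++\omega_0^-.
\]
This is exactly the case $\alpha=\beta$ of equation~(\ref{eq:add0}), which the paper explicitly notes is linear; Theorem~\ref{thm:1} is stated to be a direct consequence of this linear case. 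The paper's proof (Theorem~\ref{t:TMAsolved}) proceeds by finding the Gauduchon factor of $\tilde\omega$ and using the intermediate value theorem (via Lemma~\ref{lem:Gauduchonfactorcont}) to select the constant $c$ so that the right-hand side satisfies the solvability condition of Theorem~\ref{thm:19}. No continuity method, no $C^{2,\alpha}$ a priori estimates, and no Evans--Krylov considerations are needed here.

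Consequently, several of your stated concerns are vacuous for this problem. The equation \emph{is} concave (indeed, linear) after the trivial rewriting above. The linearized operator you wrote down is genuinely elliptic, not ``partially'' elliptic: since $T^{1,0}M=T^+\oplus T^-$ and both coefficients $(1+\Delta_+u_t)^{-1}$, $(1-\Delta_-u_t)^{-1}$ are positive, the symbol is positive definite on all of $T^{1,0}M$. The fully nonlinear estimates you invoke (Sections~6 and~7 of the paper) are the technical heart of the paper, but they address the genuinely nonlinear case $\alpha\neq\beta$ of (\ref{eq:add0}), which is used for Theorem~\ref{thm:alpha} on Hopf surfaces, \emph{not} for the Bismut--Ricci problem.

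One point you do not address, and which the paper handles separately, is positivity of $\omega_u$: solving the linear equation gives a smooth $u$, but one must still check $\omega_u>0$. The paper obtains this from the cohomological argument of Remark~\ref{rem:positive} together with Lemma~\ref{nomix}, since the equation forces $\omega_u=g\tilde\omega$ with $g$ of constant sign, and $[\omega_u]=[\omega_0]\in\mathcal P$ rules out $g<0$.
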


We will comment on the PDE approach to prove~\ref{thm:1} in a more general setting later in the paper.

\subsection{Canonical metrics on Inoue Surfaces of type $\mathcal{S}_{M}$}
\begin{defn}\label{inoue}
For a unimodular matrix $M\in SL(3,\mathbb{Z})$ with eigenvalues
$\alpha\in\mathbb{R}$, $\alpha>1$ and $\beta,\bar{\beta}\in\C.$
Let $a=(a_{1},a_{2},a_{3})$, $b=(b_{1},b_{2},b_{3})$ be the real
and complex eigen-vectors of $M$ with respect to $\alpha$ and $\beta$,
respectively. We have $\alpha|\beta|^{2}=1$. Define a holomorphic action
group $G_{M}$ on $\mathbb{H\times\mathbb{C}}$ generated by 
\begin{align*}
g_{0}(z,w)= & (\alpha z,\beta w),\\
g_{i}(z,w)= & (z+a_{i},w+b_{i}),\quad i=1,2,3.
\end{align*}
An Inoue surface of type $\mathcal{S}_{M}$, denoted as $S$ is defined
as $\mathbb{H\times\mathbb{C}}/G_{M}$. 
\end{defn}
Let $T^+$ be the holomorphic line bundle locally generated by $\frac{\del}{\del z}$ and $T^-$ be the holomorphic line bundle locally generated by $\frac{\del}{\del w}$. $T^\pm$ are both flat  as the corresponding transition functions may be chosen as constants. It is clear that $S$ satisfies Definition \ref{def:splittangent}.  We give the following
\begin{defn}
    The Tricerri metric on the Inoue surface $S$ as above is defined as
    \begin{equation}
        \omega_{a,b}:=a\i\frac{dz\wedge d\bz}{\Im(z)^{2}}+b\i\Im(z)\,dw\wedge d\bw,
    \end{equation}while $a>0,b>0$ and $\Im(z)$
 is the imaginary part of $z.$
\end{defn}
When $a=b=1$, $\omega_{a,b}$ was first constructed by Tricerri \cite{MR0706055}. It is direct to check that these metrics are both $\Box$-closed and
pluriclosed. As a consequence of Theorem \ref{thm: dimension}, these Tricerri metrics can be viewed as representatives of $\mathcal{P}(S)$ and $\mathcal{P}'(S)$ in their respective sense. However, these metrics do not seem to carry curvature properties. 

We explore the special geometric properties of $S$ by the following construction
\begin{defn}
    For $S$ satisfying Definition \ref{inoue}, define a flat holomorphic line bundle 
    \begin{equation}\label{eqn:flatbundleinoue}
        l=(T^+)\otimes(T^-)\otimes (T^-).
    \end{equation}
\end{defn}

Note that any split type Hermitian metric $\omega=\sqrt{-1}(g^{+}dz\wedge d\bar{z}+g^{-}dw\wedge d\bar{w})$ induces a metric $g^l$ on $l$ naturally. In particular, for the local section $\sigma:={( \frac{\del}{\del z})}\otimes {( \frac{\del}{\del w})}\otimes {( \frac{\del}{\del w})} $, its $g^l$ norm  can be computed as
\begin{equation}\label{inducedmetric}
   |\sigma|^2_{g^l}={(g^-)^2}{g^+}. 
\end{equation}
It is now crucial to observe that $|\sigma|^2_{g^l}$ is {\em globally well-defined} on $S$ since $\ga|\gb|^2=1$.

We raise the following question
\begin{problem}{\label{flatmetric}}
    For the flat line bundle $l$ given in \ref{eqn:flatbundleinoue}, find a split type metric $\omega\in [\omega]\in \mathcal{P}'(S)$ such that the Chern connection of its induced metric $g^l$ is flat.
\end{problem} 

Problem \ref{flatmetric} may be seen as a Calabi-Yau type problem.  While a Calabi-Yau manifold $M$ has
a trivial canonical line bundle $K$, the Calabi-Yau metric in any
Kähler class induces a flat metric on $K$. Problem \ref{flatmetric} asks for a canonical flat metric on a flat line bundle.

Since the Chern curvature is computed by $-\i\del\delb \log |\sigma|^2$, by (\ref{inducedmetric}), Problem \ref{flatmetric} can be restated as: Given a $\Box$-closed metric $\omega$, find $\omega_u=\omega+\pi(\i\del\delb u)>0$ and $\xi \in\R$ such that
\begin{equation}
\left(\frac{\omega_{u}^{+}}{\omega_{0}^{+}}\right) \left(\frac{\omega_{u}^{-}}{\omega_{0}^{-}}\right)^2=e^\xi ,\label{eq:addinoue}
\end{equation}

Note that (\ref{eq:addinoue}) is an elliptic Monge-Ampère equation; while the corresponding problem for $\mathcal{P}$ will be hyperbolic. See also   (\ref{eq:add123}).

\begin{thm}
\label{thm:main3}For an Inoue surface $S$ of type $\mathcal{S}_{M}$, given as above, 
\[
\mathcal{P}(S)=\{[\omega_{a,b}]\in\mathcal{H}(S)\ |\ \ a>0,\ b>0.\}
\]
\[
\mathcal{P}'(S)=\{[\omega_{a,b}]\in\mathcal{H}'(S)\ |\ \ a>0,\ b>0.\}
\]
\begin{enumerate}
    
    \item For any pluriclosed split Hermitian metric $\omega$
on $S$, there exists a smooth 
$\omega_{u}=\omega+\Box u=\omega_{a,b}\in{\mathcal{P}}$;  
    
    \item For any $\Box$-closed split Hermitian metric $\omega$
on $S$, there exists a smooth $\omega_{u}=\omega+\pi(\sqrt{-1}\partial\bar{\partial}u)=\omega_{a,b}\in\text{\ensuremath{\mathcal{P}'}(S) ; }$

    \item $\omega_{a,b}$ is the unique metric in its ${\mathcal{P}'}$ class
whose induced metric on $l$ is flat.
\end{enumerate}

\end{thm}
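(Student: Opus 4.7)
The plan is to combine Theorem~\ref{thm: dimension}, which gives $\dim\mathcal{H}(S) = \dim\mathcal{H}'(S) = 2$, with an explicit intersection pairing restricted to the Tricerri family, exploiting the flatness of $T^{\pm}$ and the identity $\alpha|\beta|^{2} = 1$ to move freely between local metric coefficients and global functions on $S$.

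First, I would verify that each Tricerri metric $\omega_{a,b}$ is simultaneously pluriclosed and $\Box$-closed. For a split metric written locally as $\i h^{+}dz\wedge d\bar{z} + \i h^{-}dw\wedge d\bar{w}$, a direct computation shows that the pluriclosed condition reduces to $h^{+}_{w\bar{w}} + h^{-}_{z\bar{z}} = 0$ and the $\Box$-closed condition to $h^{+}_{w\bar{w}} - h^{-}_{z\bar{z}} = 0$; both hold trivially for $\omega_{a,b}$ since $h^{+} = a/\Im(z)^{2}$ is independent of $w$ and $h^{-} = b\,\Im(z)$ is harmonic in $z$. Next, I would introduce the intersection pairing
\[
\mathcal{H}(S) \times \mathcal{H}'(S) \to \R, \qquad ([\phi],[\psi]) \mapsto \int_{S} \phi \wedge \psi,
\]
and check its well-definedness by integration by parts: replacing $\phi$ by $\phi + \Box u$ changes the integral by a multiple of $\int_{S} u\,(h^{-}_{z\bar{z}} - h^{+}_{w\bar{w}})$, which vanishes because $\psi$ is $\Box$-closed, while the symmetric correction $\psi \mapsto \psi + \pi(\i\del\delb v)$ is handled by the pluriclosedness of $\phi$. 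A direct computation yields
\[
\int_{S} \omega_{a,b}\wedge \omega_{c,d} = (ad+bc)\,V_{S}, \qquad V_{S} := \int_{S}\bigl(\i\,dz\wedge d\bar{z}/\Im^{2}\bigr)\wedge \bigl(\i\,\Im\,dw\wedge d\bar{w}\bigr) > 0,
\]
so the $2\times 2$ pairing matrix on $\{[\omega_{1,0}],[\omega_{0,1}]\}$ is off-diagonal and non-degenerate; combined with Theorem~\ref{thm: dimension}, these classes span both $\mathcal{H}(S)$ and $\mathcal{H}'(S)$. For the positivity statement, any $\omega>0$ in a class $[\omega_{a,b}]$ satisfies $\int_{S}\omega\wedge\omega_{0,1} = aV_{S}$ and $\int_{S}\omega\wedge\omega_{1,0} = bV_{S}$, both positive since the integrands reduce to positive multiples of the volume form, forcing $a,b>0$. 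Parts~(1) and~(2) then follow at once: the equality of cohomology classes $[\omega] = [\omega_{a,b}]$ in $\mathcal{H}$ (resp.\ $\mathcal{H}'$) furnishes by the definition of the quotient a smooth $u$ with $\omega + \Box u = \omega_{a,b}$ (resp.\ $\omega + \pi(\i\del\delb u) = \omega_{a,b}$).

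For part~(3), evaluation on $\omega_{a,b}$ gives $|\sigma|^{2}_{g^{l}} = g^{+}(g^{-})^{2} = ab^{2}$, a positive constant, so the Chern curvature $-\i\del\delb\log|\sigma|^{2}$ vanishes. Conversely, suppose $\omega = \omega_{a,b} + \pi(\i\del\delb u) > 0$ also has flat induced metric on $l$. Then $(a/\Im^{2} + u_{z\bar{z}})(b\Im + u_{w\bar{w}})^{2}$ is a smooth positive pluriharmonic function on the compact surface $S$, hence equal to a constant $C$. Evaluating at a maximum of $u$ (where $u_{z\bar{z}}, u_{w\bar{w}} \leq 0$) gives $C \leq ab^{2}$, and evaluating at a minimum gives $C \geq ab^{2}$, so $C = ab^{2}$. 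Subtracting the identity $(a/\Im^{2})(b\,\Im)^{2} = ab^{2}$ from the flat equation and applying the mean value theorem to interpolate between the complex Hessians of $0$ and $u$ produces a linear second-order elliptic operator $L$ with no zeroth-order term satisfying $Lu = 0$; the strong maximum principle forces $u$ to be constant, and thus $\omega = \omega_{a,b}$.

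The principal technical obstacle is the integration-by-parts argument that makes the pairing well-defined on $\mathcal{H}(S)\times\mathcal{H}'(S)$; once this bookkeeping is carried out, the cohomological analysis reduces to linear algebra in dimension two, and the uniqueness in part~(3) follows from the standard strong maximum principle for linear elliptic operators of pure second order.
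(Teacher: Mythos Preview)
Your proposal is correct and follows essentially the same strategy as the paper: invoke Theorem~\ref{thm: dimension} to know $\dim\mathcal{H}=\dim\mathcal{H}'=2$, show the two Tricerri pieces $\omega_{1,0},\omega_{0,1}$ are independent and hence span, and read off $a,b>0$ from positivity of $\omega$; parts~(1)--(2) are then immediate from the definition of the quotient, and part~(3) is a maximum-principle uniqueness for the elliptic equation~(\ref{eq:addinoue}).

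The differences are minor but worth noting. To detect linear independence and positivity you build a pairing $\mathcal{H}\times\mathcal{H}'\to\R$ via $\int_S\phi\wedge\psi$; the paper instead uses the antisymmetric bracket $\{\cdot,\cdot\}$ on $\mathcal{H}$ from Definition~\ref{def:bracket}, and for positivity invokes Lemma~\ref{nomix} (the classes $[a\omega_1+b\omega_2]$ with $a,b$ of mixed sign lie in $\mathcal{H}^-$, hence cannot meet $\mathcal{P}$). Your pairing is exactly the paper's bracket transported through the involution $\iota$ of Lemma~\ref{lem:involution}, so the two computations are equivalent; your positivity argument via wedging with the semi-positive forms $\omega_{1,0},\omega_{0,1}$ is arguably more direct here than the general Lemma~\ref{nomix}. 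For part~(3) the paper says only ``ellipticity of the corresponding PDE implies uniqueness,'' whereas you spell out the argument: constancy of $g^+(g^-)^2$ by compactness, identification of the constant as $ab^2$ via extrema of $u$, and then linearization to a pure second-order elliptic equation $Lu=0$. That is precisely the content behind the paper's one-line remark.
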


 Theorem \ref{thm:main3} shows that  Tricerri metrics are indeed 
canonical metrics on Inoue surfaces in their respective $\mathcal{P}'$
class when we consider the induced metric on $l.$ Since we have a
complete classification of $\mathcal{P}'$ for Inoue surfaces, we
are not solving the corresponding non-linear PDE. The corresponding problem in $\mathcal{P}$ is clearly hyperbolic, which requires a completely different set of tools to study.

\subsection{A new fully non-linear PDE}
We now turn to the PDE aspect of our paper. With application to special
examples of Hopf surfaces  in mind, we concentrate
on the surface case. In this paper, on a complex surface $M$ with
split tangent, assume that $\omega_{0}\in[\omega]\in\mathcal{P},$
let $\omega_{u}=\omega_{0}+\Box u>0$ be another Hermitian metric.
Assume that $\alpha>0, \beta>0$ and $F\in C^{\infty}(M)$. We consider the
following partial differential equation:
\begin{equation}
\left(\frac{\omega_{u}^{+}}{\omega_{0}^{+}}\right)^{\beta}=e^{F+ \xi}\left(\frac{\omega_{u}^{-}}{\omega_{0}^{-}}\right)^\alpha,\label{eq:add0}
\end{equation}
where $\xi \in\mathcal{\mathbb{R}}$ and $u\in C^{\infty}(M)$ are unknowns.

Equation (\ref{eq:add0}) can be viewed as a Monge-Amp\`ere type
PDE of split type. When $\alpha=\beta,$ it is actually linear. In fact, Theorem \ref{thm:1} is a direct consequence of this case. 
However,
when $\alpha\neq\beta,$ (\ref{eq:add0}) is fully non-linear. There have
been many recent works in the real and complex cases of Monge-Amp\`ere
PDE of split type. See \cite{s:borninfeld,s:globallygenerated,s:pcfongkwithsplit,Streets2014PluriclosedFO,sw:evanskrylov}. 

It is interesting to point out that while there are PDEs exploring
partial components of the Hessian matrix \cite{sw:evanskrylov,Mooney-Savin},
our equation (\ref{eq:add0}), gives non-homogeneous weights to different
components when $\alpha\neq\beta$. As far as we are aware, this is a
new form of geometric Monge-Amp\`ere type equation.

Our main technical result of the paper is the following:
\begin{thm}
\label{thm:main}Let $\alpha, \beta>0$. Assume that $M$ is a closed, complex
surface with split tangent with notation as above. Given any fixed
$[\omega_{0}]\in\mathcal{P},$ and any smooth function $F\in C^{\infty}(M),$
there exists a unique pair of smooth $\omega_{u}=\omega_{0}+\Box u>0$
and $\xi \in\mathbb{R}$ to solve (\ref{eq:add0}).
\end{thm}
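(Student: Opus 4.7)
The plan is the continuity method. Consider the one-parameter family
\[
E_t : \left(\omega_u^+/\omega_0^+\right)^\beta = e^{tF+\xi}\left(\omega_u^-/\omega_0^-\right)^\alpha,\qquad t\in[0,1],
\]
with unknowns $(u,\xi)\in C^{k,\alpha}(M)\times\R$ normalized by $\int_M u\,dV_{\omega_0}=0$. At $t=0$ the pair $(0,0)$ solves $E_0$, so the set $S\subset[0,1]$ of parameters for which $E_t$ admits a smooth solution is nonempty, and it suffices to show $S$ is both open and closed.

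For openness, rewrite $E_t$ logarithmically as $\beta\log h^+-\alpha\log h^-=tF+\xi$, where $h^\pm:=\omega_u^\pm/\omega_0^\pm$ are scalar functions since $T^\pm$ are line bundles on a surface. The linearization at a solution is
\[
L_t(v)=\frac{\beta}{h^+}\cdot\frac{\i\,\del_+\delb_+ v}{\omega_0^+}+\frac{\alpha}{h^-}\cdot\frac{\i\,\del_-\delb_- v}{\omega_0^-},
\]
a split-type second-order elliptic operator with strictly positive coefficients on both pure Hessian directions. The strong maximum principle for such operators forces $\ker L_t=\R$; the one-dimensional cokernel obstruction is absorbed by the free parameter $\xi$, so the implicit function theorem in Hölder spaces delivers openness.

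The main work, and the central obstacle, lies in the a priori estimates needed for closedness. The scalar $\xi_t$ is controlled immediately: at a maximum $p$ of $u_t$, $\i\,\del_+\delb_+ u_t\le 0$ and $-\i\,\del_-\delb_- u_t\ge 0$, so $h^+(p)\le 1\le h^-(p)$, giving $tF(p)+\xi_t\le 0$; testing at a minimum yields the reverse inequality, hence $|\xi_t|\le \|F\|_{C^0}$. The $C^0$ bound on $u_t$ itself is more delicate, since the mixed signs in $\Box$ obstruct any direct maximum principle attack. Our planned approach is a Moser iteration of Tosatti--Weinkove type: test suitable powers of $u_t$ against a Gauduchon representative, integrate by parts cleanly using the split-type structure, and close the iteration by exploiting Theorem~\ref{thm: dimension} to pin down the cohomological average of $\Box u_t$ against a fixed basis of $\mathcal{H}$.

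Once $\|u_t\|_{C^0}$ is in hand, a Yau-style maximum principle computation applying $\Box$ to $E_t$, combined with the concavity of $\log$, yields a uniform two-sided bound $C^{-1}\le h^\pm\le C$, so $L_t$ becomes uniformly elliptic. The logarithmic form of $E_t$ depends only on the two pure Hessian entries $u_{+\bar{+}}$ and $u_{-\bar{-}}$, splitting as a concave function of the former plus a convex function of the latter; an Evans--Krylov type argument adapted to the split setting, in the spirit of Streets--Warren \cite{sw:evanskrylov}, upgrades the $C^2$ estimate to $C^{2,\alpha}$, and Schauder bootstrap yields smoothness, closing the continuity method. Uniqueness follows by a mean value argument: for two solutions $(u_i,\xi_i)$ the difference $w=u_1-u_2$ satisfies $Lw=\xi_1-\xi_2$ for a split-type elliptic operator $L$ with positive coefficients; testing at the extrema of $w$ forces $\xi_1=\xi_2$, and then the strong maximum principle together with the normalization forces $w\equiv 0$.
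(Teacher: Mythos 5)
The structural skeleton of your proposal --- continuity path $F_t=tF$, maximum-principle bound on $\xi$, openness via the implicit function theorem with the cokernel absorbed by $\xi$, and an Evans--Krylov argument exploiting the split concave/convex structure --- matches the paper, but the two central technical steps are either misidentified or missing.

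First, the $C^0$ bound. Your plan to run a Tosatti--Weinkove Moser iteration by ``pinning down the cohomological average of $\Box u_t$'' is too vague and does not identify the actual mechanism. The paper's estimate works in two stages: it first proves a \emph{Laplace lower bound} $\Delta_0 u\ge -C$, obtained from $\eta\le C\lambda^\beta$ and Young's inequality in the form $\lambda-C\lambda^\beta\ge -(1-\beta)C^{1/(1-\beta)}$ --- a step that crucially uses $\beta\neq 1$; then it feeds this lower bound into a Green's-function argument (Alesker--Shelukhin) to get $\|u\|_{L^1}\le C$, and only then invokes the Tosatti--Weinkove iteration (Lemma~3.4 of \cite{TWBalancedandHerm}) to upgrade $L^1$ to $L^\infty$. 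Your sketch contains no analogue of the Laplace lower bound and gives no indication of where the iteration would close.

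Second, and more seriously, your $C^2$ discussion is incomplete on both counts. You invoke ``the concavity of $\log$,'' but the paper explicitly stresses that equation (\ref{eq:add0}) is \emph{not} concave in the Hessian when $\alpha\neq\beta$ --- this is precisely what makes the estimate delicate. The two-sided bound on $\lambda$ is obtained from two separate test functions $\Phi=\log\lambda+\psi(u)$ with carefully calibrated choices of $\psi$ (linear for the lower bound, $\tau x-\log(x+1)$ for the upper), each exploiting $\beta<1$ in an essential way; a generic ``Yau-style'' computation will not close. Moreover, your proposal says nothing about the \emph{mixed} second derivatives $u_{z\bar w}$, $u_{zw}$, etc. Unlike the standard Monge--Amp\`ere equation, (\ref{eq:add0}) involves only the pure Hessian entries $u_{z\bar z}$, $u_{w\bar w}$, so a bound on $\lambda$, $\eta$ does \emph{not} control the full Hessian. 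The paper devotes a separate subsection (Lemma~\ref{lem:subsolution} and the ensuing local maximum-principle argument) to extracting these off-diagonal bounds before Evans--Krylov can be applied; without this step the $C^{2,\alpha}$ estimate cannot be reached. Finally, since all of your nonlinear estimates implicitly need $\beta\neq 1$, you would still have to dispatch the case $\alpha=\beta$ separately, which the paper does via the linear Theorem~\ref{t:TMAsolved}.
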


We make some remarks regarding the proof of Theorem \ref{thm:main}.
First, it is important to realize that (\ref{eq:add0}) is an elliptic
PDE when $\alpha>0,\beta>0$. Second, since the $\beta=1$ case is essentially linear, its
proof  is similar in spirit to that of Theorem \ref{thm: dimension}.
The general case is much more subtle. Though the dimension is low,
we have to apply many techniques in fully non-linear PDEs with 
interesting new components. We have used
heavily the fact that $\alpha\neq\beta$. We have to obtain both upper
and positive lower bounds of the diagonal terms of the Hessian. Off-diagonal terms of the Hessian have to be estimated separately. In addition, our PDE is not concave, which complicates {\em a priori} estimates further. See Section 4 for more details.

\subsection{New canonical metrics on Hopf surfaces }

With Theorem \ref{thm:main}, we give a new family of \emph{ Calabi-Yau type canonical} metrics on Hopf surfaces utilising the special geometric structure of split tangent.

It is known that the corresponding Bismut-Einstein problem can only
be solved on K\"ahler Calabi-Yau surfaces and regular Hopf surfaces
$H_{a,b}$ with $\alpha=\beta$ as in Definition \ref{def:Hopf}, on which
all Bismut-Ricci flat metrics are unique up to diffeomorphism as quotient
metrics from the standard product metric on $\mathbb{S}^{3}\times\mathbb{R}$
(e.g. \cite[Theorem 8.26]{gfs} or \cite[Theorem 1.4]{STREETS2012366}).
See Section 3 for further discussion. In general, the Streets-Ustinovskiy metric is given as a Bismut-Ricci soliton, which by Theorem \ref{uniformization} may be viewed as the canonical metric on Hopf surfaces.

Again, let $T^+$ be the holomorphic line bundle locally generated by $\frac{\del}{\del z}$ and $T^-$ be the holomorphic line bundle locally generated by $\frac{\del}{\del w}$. $T^\pm$ are both flat bundles over $H$. $H$ satisfies Definition \ref{def:splittangent}.
 
We consider first the regular Hopf surface where $a=b \in\R$. Due to the equivalence
relation, it is easy to see that  $l:=T^+ \otimes (T^-)^*$ is a trivial line bundle. Furthermore,  any Bismut Ricci-flat split metric $\omega$ induces a flat metric
on $l$. In fact, Theorem \ref{thm:1}. shows that the norm of section ${\frac{\del}{\del z}}\otimes ({\frac{\del}{\del w}})^* $
with respect to  the induced metric  is constant. Therefore, $l$ is metrically trivial, too. 

We turn to the case where $\alpha\neq \beta$. While the above construction
no longer works, we may consider a {\em virtual} flat line bundle 
$$l:= (T^+)^\beta\otimes (T^-)^{*\alpha}.$$    
 We note that when $\beta/ \alpha \in\mathbb Q$, $l^{\frac{m}{ \alpha}}$ is well defined over $H$ for a properly chosen integer $m$. Otherwise, $l$ is ill defined. However, the real line bundle $L:=l\otimes \bar{l}$ is well defined over $H$. In other words, if $\sigma:={( \frac{\del}{\del z})}^\beta \otimes {( \frac{\del}{\del w})}^{*\alpha} $, then $\sigma\otimes\bar{\sigma}$ may be viewed as a global non-vanishing section of $L$.
  For any split type Hermitian metric $\omega=\sqrt{-1}(g^{+}dz\wedge d\bar{z}+g^{-}dw\wedge d\bar{w})$ induces a metric $g^L$ on $L$ naturally. It is direct to see that
  \begin{equation}
   |\sigma\otimes\bar{\sigma}|_{g^L}=\frac{{(g^+)^\beta}}{({g^-})^\alpha}. 
\end{equation}

As in the Inoue surface case, we observe that $|\sigma\otimes\bar{\sigma}|_{g^L}$ is now a globally defined function as it is compatible with the equivalence relation used to define $H_{a,b}$. Therefore, we may pose the question of finding flat metrics on $L$, which means that the induced metric has vanishing Chern curvature.
We establish the following:
\begin{thm}
\label{thm:alpha}Notation as above. For any primary Hopf surface
$H_{a,b}$, where $\alpha= \Re(a)>0$, $\beta = \Re(b)>0$ and any fixed $[\omega_{0}]\in\mathcal{P},$
there exists a unique $\omega_{u}=\omega_{0}+\Box u>0,$ whose induced
metric on the trivial real line bundle $L$ is flat.
\end{thm}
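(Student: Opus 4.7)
The plan is to recognize the flat-metric condition on $L$ as a particular instance of equation (\ref{eq:add0}), and then apply Theorem \ref{thm:main} directly. For a split metric $\omega_u = \i(g_u^+ dz\wedge d\bz + g_u^- dw\wedge d\bw)$, the preceding discussion identifies $|\sigma\otimes\bar\sigma|_{g^L} = (g_u^+)^\beta/(g_u^-)^\alpha$ as a globally well-defined positive function on $H_{a,b}$, with Chern curvature equal to $-\i\del\delb$ of its logarithm. Flatness of the induced metric on $L$ is therefore equivalent to pluriharmonicity of $\log[(g_u^+)^\beta/(g_u^-)^\alpha]$.

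For existence, I would set
\[
F := -\log\frac{(g_0^+)^\beta}{(g_0^-)^\alpha}\in C^\infty(H_{a,b}),
\]
which is globally defined for exactly the same reason applied to $\omega_0$. Theorem \ref{thm:main} then produces a unique pair $(\omega_u = \omega_0 + \Box u > 0,\ \xi \in \R)$ solving (\ref{eq:add0}) with this $F$. A one-line rearrangement of (\ref{eq:add0}) yields
\[
\frac{(g_u^+)^\beta}{(g_u^-)^\alpha} = e^{F+\xi}\,\frac{(g_0^+)^\beta}{(g_0^-)^\alpha} = e^\xi,
\]
a positive constant, so the induced metric on $L$ is flat.

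For uniqueness, suppose $\omega_v = \omega_0 + \Box v > 0$ is another split metric whose induced metric on $L$ is flat. Then $h := \log[(g_v^+)^\beta/(g_v^-)^\alpha]$ is pluriharmonic on $H_{a,b}$. The key additional input I will need is a Liouville-type rigidity: \emph{every smooth pluriharmonic function on the primary Hopf surface with $\alpha,\beta>0$ is constant}. To prove it, pull $h$ back to the simply connected cover $\C^2\setminus\{0\}$, where it writes as $\Real G$ for some holomorphic $G$; by Hartogs, $G$ extends to all of $\C^2$. Invariance under $(z,w)\mapsto(e^a z, e^b w)$ forces $G(e^a z, e^b w) - G(z,w) = \i c$ for some $c\in\R$, and matching Taylor coefficients $G = \sum a_{jk} z^j w^k$ together with the strict positivity $\Real(aj+bk) = \alpha j + \beta k > 0$ for all $(j,k)\in\Z_{\geq 0}^2\setminus\{(0,0)\}$ forces $a_{jk}=0$ off the origin, so $G$ and hence $h$ is a constant $\eta$. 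Then $\omega_v$ satisfies (\ref{eq:add0}) with the same $F$ and with $\xi=\eta$, so the uniqueness clause of Theorem \ref{thm:main} gives $\omega_v = \omega_u$.

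The substantial analytic content has been absorbed into Theorem \ref{thm:main}; what remains is the algebraic reformulation of flatness, the observation that the correct prescribed datum $F$ is globally defined precisely because $L = l\otimes\bar l$ is, and the Liouville-type rigidity for pluriharmonic functions on $H_{a,b}$. The last step is the only non-routine new ingredient and depends crucially on $\alpha,\beta>0$.
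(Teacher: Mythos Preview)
Your proposal is correct and follows the paper's approach: the paper simply observes (in the remark immediately following the theorem) that the flatness condition on $L$ is exactly equation (\ref{eq:add0}), so Theorem \ref{thm:alpha} is a direct consequence of Theorem \ref{thm:main}. Your uniqueness argument via Hartogs extension and Taylor coefficients is correct but more elaborate than needed: on any compact connected complex manifold a pluriharmonic function $h$ satisfies $\Delta_{\omega}h=0$ for the Chern Laplacian of any Hermitian metric $\omega$, hence is constant by the strong maximum principle, so the specific structure of $H_{a,b}$ and the hypothesis $\alpha,\beta>0$ are not actually required for this Liouville step.
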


\begin{rem}
 The corresponding PDE problem is exactly (\ref{eq:add0}).  Therefore, Theorem \ref{thm:alpha} is a direct consequence of Theorem~\ref{thm:main}.
\end{rem}
\begin{rem}
    Theorem \ref{thm:alpha} asserts the existence of a second kind of canonical pluriclosed
representative for a fixed $[\omega]\in\mathcal{P}$. There are various constructions of pluriclosed metrics on general primary Hopf surfaces (e.g. \cite{su:gkrsolitons, Gauduchon-Ornea}). Our construction is new. It is interesting to compare two kinds of canonical metrics:  Streets-Ustinovskiy metrics and the ones coming from Theorem~\ref{thm:alpha}.
\end{rem}
\begin{rem}
    Since $\mathcal{P}'(H_{a,b})=\emptyset$, we do not have a dual problem in this case. 
\end{rem}

\subsection{Remarks on related works}

Our work is motivated and influenced by many existing works from various
areas. Here we make  comments on some of those. Our list is by
no means comprehensive. Interested readers are referred to works cited
below and references therein. 

In the physics literature, the generalized K\"ahler condition was
introduced by Gates-Hull-Roček\cite{ghr:twistedmultiplet}. Hull-Lindström-Roček-von
Unge-Zabzine identified a scalar equation \cite{Hull2010GeneralizedCM}
in the commuting case, which is related to pluriclosed flow \cite{s:pcfongkwithsplit}
and the PDE that we study in this paper. 

Hitchin in \cite{hitchin:genCY} and subsequently, Gualtieri in \cite{g:gencompgeo}
introduced the notion of generalized complex geometry and generalized
Kähler geometry. Streets and Tian introduced the pluriclosed flow
as an extension of K\"ahler-Ricci flow in \cite{st:pluriclosed}.
Later Street and Tian studied generalized K\"ahler-Ricci flow on
generalized K\"ahler manifolds \cite{STREETS2012366}. This setting
has proved flexible enough for the formulation of several non-K\"ahler
Calabi-Yau conjectures in different regimes. See, for example, \cite[Conjecture 1.1]{apostolov2022generalized},
\cite[Conjecture 1.2]{Apostolov2017TheNG}, and \cite[Conjecture 3.12]{s:pcfongkwithsplit}.
For further background, readers are encourage to refer to the book
by Garcia-Fernandez and Streets \cite[Chapters 7-9]{gfs}. See also Zheng \cite{Zhengsurvey}.

Bott-Chern and Aeppli Cohomologies and their analytical implications
have been extensively studied in non-Kähler complex geometry. See
the note by Schweitzer \cite{Schweitzer2007AutourDL} and the book
by Bismut \cite{bismut} for reference. See also the review by Angella
\cite{angella2015bottchern}. Our construction of split type cohomologies
is first raised in \cite{su:gkrsolitons}. 

For various examples of pluriclosed metrics, we mention the earlier works of  Gauduchon-Ornea
\cite{Gauduchon-Ornea} for a construction of special Gauduchon metrics
on all primary Hopf surfaces. Additionally, Apostolov and Dloussky have constructed pluriclosed metrics on Hopf surfaces which are compatible with two complex structures in \cite{apostolovanddloussky}.  Tian-Streets \cite[Theorem 1.4]{STREETS2012366}
have classified all Bismut Ricci flat metrics in dimension 2. Streets and Ustinovskiy discovered metrics, which we refer to as the Streets-Ustinovskiy metrics, on Hopf surfaces and showed that these are compact, steady generalized K\"ahler Ricci solitons \cite{su:gkrsolitons}; in the commuting case this is equivalent to being a Bismut Ricci soliton. Ye in \cite{Ye2022BismutEM} has proved that all non-Kähler Bismut Einstein metrics are Bismut Ricci flat.
Yang and Zheng \cite{Wang2016OnBF} have classified all Bismut flat
metrics in dimension 2 and 3. We also mention the construction of Tricerri \cite{MR0706055} on Inoue surfaces, which plays a crucial role in this paper. For other references regarding Bismut-Ricci problem, see\cite{s:pcfongkwithsplit,gjs:nonkahler,js:c3}.
%Ye in \cite{Ye2022BismutEM} has proved that all non-Kähler Bismut
%Einstein metrics are Bismut Ricci flat. 

For commuting-type generalized K\"ahler manifolds, Apostolov and
Gualtieri,\cite[Proposition 5]{ag:gkwithsplit} have shown that local
deformations are given by scalar functions. Streets has shown in specific cases that the corresponding cohomology group is finite dimensional \cite[Lemma 5.3]{su:gkrsolitons}, suggesting  a more general theory. 
%On standard Hopf surfaces, Theorem \ref{thm: dimension}
%and a result by Angella-Dloussky-Tomassini \cite{Angella2014OnBC}
%disprove this conjecture. See Remark \ref{rem:add100} %for details.

Monge-Amp\`ere equations have been studied extensively in complex
geometry, initiated by the celebrated works of Yau \cite{yau:cyproof}
on the Calabi conjecture. Many extensions in Hermitian geometry have
been proved by various authors, see \cite{guan2009complex,TW:MAonHerm,TW:MAforpluri,TW:HermformsMA,CTW}
and reference therein.

Split-type elliptic PDEs, as a natural extension of Hessian equations,
have seen some study in the context of both geometry and PDE. See
the works of Streets-Warren \cite{sw:evanskrylov} and Mooney-Savin\cite{Mooney-Savin}.

Several recent studies address metrics on Inoue surfaces after the
earlier work of Tricerri \cite{MR0706055}. See, for example, Shen-Smith
\cite{Shen2022TheCE}, Angella-Tosatti \cite{Angella2021LeafwiseFF}
and Tosatti-Weinkove \cite{TosattiWeinkoveCRF}. 

\subsection{Future problems}

In the following, we raise some future problems.

First, it is a surprising fact that Theorem \ref{thm:main} has two
distinct proofs for cases $\alpha=\beta$ and $\alpha\neq \beta$. It is geometrically
tempting to conjecture that resulting solutions, when properly normalized,
have some continuity property with respect to the parameter $\beta/\alpha$.
However, we have not been able to establish a uniform proof.

It is also our intention to study bi-Hermitian metrics on other families of complex surfaces with split tangent and their properties.

As indicated earlier, a PDE similar to (\ref{eq:addinoue}) 
may be studied directly, which may be useful to analyze the geometry of specific examples.

It is highly likely that  with our existing {\em{a priori}} estimates, a Bismut-Ricci flow approach should give an alternative proof of Theorem \ref{thm:1}, which should carry more geometric consequence than our continuity method approach.

Our results indicate potential constructions in higher dimensions
from both the PDE point of view and the geometric point of view. See \cite{gfs} for background and some interesting problems.

The rest of the paper is organized as follows. In Section 2, we collect
some background definitions and results regarding Gauduchon metrics,
Chern Laplacians, classifications of surfaces with split tangent and some formulas for Bismut Ricci curvature.
In Section 3, we explore the structure of the cohomology groups $\mathcal{H}$ and $\mathcal{H}'$, establish Theorem \ref{thm: dimension} and study metrics of Inoue surfaces. In Section 4, we study the metric cone of Hopf surfaces in detail and establish a uniformization theorem.
In Section 5, we establish
the linear version of Theorem \ref{thm:main} and give an affirmative
answer to Problem \ref{prob:CalabiProblem}. In Section 5, we establish
necessary {\em a priori} estimates to study the non-linear version of (\ref{eq:add0}).
In Section 6, we prove the full version of Theorem \ref{thm:main}
using the continuity method.

\subsection{Acknowledgments}

Both authors would like to thank Connor Mooney, Jeff Streets, Lihe
Wang and Jinyang Wu for discussions. They appreciate valuable comments of Jeff Streets on an earlier draft of this paper. The first named author thanks Bo Guan, Biao Ma, Wei Wei and Fangyang Zheng for discussions. Both authors   acknowledge partial support from NSF RTG grant DMS-2038103.

\section{Relevant Background}

In this section, we collect some background facts and results that
will be used in this paper.

First, for a complex manifold with split tangent, we introduce a
bi-Hermitian structure.
\begin{defn}
\label{def:J}For a complex manifold $(M,I)$ with split tangent,
as given in Definition \ref{def:splittangent}. We define a second
complex structure $J:TM\to TM$ such that $J|_{T^{\pm}}=\pm I.$ 

It is clear that $J^{2}=-\mathrm{Id}.$ A direct computation shows
that $J$ is also integrable. 
\end{defn}

Second, we introduce the notions of \emph{Gauduchon and pluriclosed
Hermitian metrics}.
\begin{defn}
A Hermitian metric $\omega$ on a complex manifold $(M^{n},I)$ is
called \emph{Gauduchon} if $\i\del\delb\omega^{n-1}=0$. When $n=2$,
the Gauduchon condition is equivalent to \emph{the pluriclosed condition.}
We will use these notions interchangeably.
\end{defn}

One reason for considering such metrics is that every conformal class
on a compact, complex manifold which contains a Hermitian metric admits
a Hermitian metric satisfying the Gauduchon condition \cite{gauduchon:nulle}.
We state this theorem in detail below as we will use it extensively
in this work.
\begin{thm}
\label{thm:Gauduchonmetric}\cite{gauduchon:nulle} Let $M^{n}$ be
a compact, complex manifold. Then, if $\omega$ is an arbitrary Hermitian
metric, there is a unique smooth function $f\in C^{\infty}(M)$, such
that
\begin{align}
\i\del\delb(e^{(n-1)f}\omega^{n-1}) & =0,\label{eq:gauduchoneqn}\\
\int_{M}e^{nf}\omega^{n} & =\int_{M}\omega^{n}.
\end{align}
\end{thm}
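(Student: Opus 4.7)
The plan is to reduce the existence of the Gauduchon conformal factor to a Fredholm / maximum-principle problem for a single linear elliptic operator, and then use a normalization argument to pin down the constant.

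First, I would recast the equation in operator form. Setting $u = e^{(n-1)f}$, I define the linear second-order operator $L : C^\infty(M) \to C^\infty(M)$ by
\[
\sqrt{-1}\,\partial\bar{\partial}(u\, \omega^{n-1}) = L(u)\, \omega^n.
\]
Expanding by the Leibniz rule, $L(u) = \operatorname{tr}_\omega(\sqrt{-1}\,\partial\bar{\partial} u) + \langle B, \partial u\rangle + \langle \bar{B},\bar{\partial} u\rangle + A u$ for some globally defined $A$, $B$ determined by $\omega$. Thus $L$ is elliptic with the same principal symbol as the Chern Laplacian. Solving (\ref{eq:gauduchoneqn}) is equivalent to finding a strictly positive $u \in \ker L$.

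Second, I would compute the formal $L^2(\omega^n)$-adjoint $L^*$. Since $M$ is closed, two integrations by parts give
\[
\int_M v\, L(u)\, \omega^n = \int_M v\, \sqrt{-1}\,\partial\bar{\partial}(u\,\omega^{n-1}) = \int_M u\, \sqrt{-1}\,\partial\bar{\partial} v \wedge \omega^{n-1},
\]
so $L^*(v) = \tfrac{1}{n}\operatorname{tr}_\omega(\sqrt{-1}\,\partial\bar{\partial} v)$. This is a pure second-order elliptic operator with no zeroth-order term. By the strong maximum principle applied on the compact manifold $M$, $\ker L^* = \mathbb{R}\cdot 1$, which is one-dimensional.

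Third, by Fredholm theory for elliptic operators between Sobolev spaces, $\dim\ker L = \dim\ker L^* = 1$. To upgrade a generator to a positive function, I would appeal to the Krein–Rutman theorem (or, equivalently, a direct argument: since $L^*$ has positive principal eigenfunction $1$ with eigenvalue $0$, the dual operator $L$ also has $0$ as its principal eigenvalue realized by a strictly positive eigenfunction $u_0$). This yields $u_0 \in C^\infty(M)$ with $u_0 > 0$ and $L(u_0) = 0$.

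Finally, any positive element of $\ker L$ is a positive scalar multiple of $u_0$. Writing $e^{(n-1)f} = c\, u_0$ with $c > 0$, the normalization
\[
\int_M e^{nf}\omega^n = c^{n/(n-1)} \int_M u_0^{n/(n-1)} \omega^n = \int_M \omega^n
\]
determines $c$ uniquely because $c \mapsto c^{n/(n-1)}$ is strictly increasing. Taking $f = \tfrac{1}{n-1}\log(c u_0)$ gives a unique smooth solution. The main obstacle in this outline is the third step: because $L$ has nontrivial zeroth- and first-order coefficients of indefinite sign (coming from $\partial\bar{\partial}\omega^{n-1}$ and $\partial\omega^{n-1}$), positivity of the kernel generator is not immediate from standard maximum-principle arguments for $L$ itself, and requires the duality to $L^*$ together with Krein–Rutman.
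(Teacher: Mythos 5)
The paper does not prove this statement; Theorem~\ref{thm:Gauduchonmetric} is cited as background directly from Gauduchon's 1977 Comptes Rendus note \cite{gauduchon:nulle}, so there is no in-paper argument to compare against. Your outline is a correct reconstruction of Gauduchon's original argument: recast the conformal Gauduchon condition as the positive-kernel problem for the linear second-order elliptic operator $L$ defined by $\i\del\delb(u\,\omega^{n-1}) = L(u)\,\omega^n$; observe that the formal $L^2(\omega^n)$-adjoint $L^*$ is (a constant multiple of) the Chern Laplacian on functions, which is pure second order, so Hopf's strong maximum principle on the connected compact manifold $M$ gives $\ker L^* = \mathbb{R}\cdot 1$; deduce $\dim\ker L = 1$ from index-zero Fredholm theory; produce a strictly positive generator $u_0$ of $\ker L$ by Krein--Rutman; and normalize.

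The one place where the sketch is genuinely incomplete is the Krein--Rutman step. The parenthetical claim that ``the dual operator $L$ also has $0$ as its principal eigenvalue realized by a strictly positive eigenfunction'' asserts two things at once without justification: Krein--Rutman applied to the resolvent $(L + c\,\mathrm{Id})^{-1}$ for $c\gg 0$ (which is compact by elliptic estimates and order-preserving by the maximum principle applied to $L + c\,\mathrm{Id}$) produces a strictly positive eigenfunction $u_0$ of $L$ with \emph{some} real eigenvalue $\mu$, but it does not by itself identify $\mu$. One must still argue $\mu = 0$, and this is exactly where the duality with $1 \in \ker L^*$ enters: if $L u_0 = \mu u_0$, then
\[
\mu\int_M u_0\,\omega^n = \int_M 1\cdot L(u_0)\,\omega^n = \int_M u_0\, L^*(1)\,\omega^n = 0,
\]
and since $u_0>0$ the left integral is strictly positive, forcing $\mu = 0$. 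With this inserted, the argument closes. The remaining ingredients you invoke --- index zero because the principal symbol of $L$ agrees with that of the Laplacian, smoothness of $u_0$ by elliptic regularity, and the normalization via the strictly monotone function $c\mapsto c^{n/(n-1)}$ --- are all correct and standard.
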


We list a couple properties of (\ref{eq:gauduchoneqn}) for future
use.
\begin{lem}
\label{lem:fconst} (c.f.\cite{gauduchon:nulle}) Suppose $\omega>0$
is Gauduchon. Let $f\in C^{\infty}(M)$ satisfy $\i\del\delb(f\omega^{n-1})=0$,
then $f\equiv const$.
\end{lem}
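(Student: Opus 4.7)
The plan is to interpret the hypothesis $\i\del\delb(f\omega^{n-1})=0$ as saying that $f$ lies in the kernel of a second-order linear elliptic operator with \emph{no} zeroth-order term, and then to conclude by the strong maximum principle on the compact manifold $M$. The key algebraic step is the Leibniz expansion
$$\i\del\delb(f\omega^{n-1}) = f\cdot\i\del\delb\omega^{n-1} + \i\del f\wedge\delb\omega^{n-1} - \i\delb f\wedge\del\omega^{n-1} + \i\del\delb f\wedge\omega^{n-1},$$
in which the first summand is precisely what the Gauduchon condition kills. I would then divide the remaining $(n,n)$-form by the nowhere-vanishing volume form $\omega^n$ to produce a globally defined real scalar $Lf$, yielding a linear differential operator $L\colon C^\infty(M)\to C^\infty(M)$ for which the hypothesis becomes $Lf=0$.

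Next, I would verify that $L$ is strictly elliptic with no zeroth-order term. Its top-order part comes from $\i\del\delb f\wedge\omega^{n-1}$, which equals $\frac{1}{n}(\Delta^{Ch}f)\,\omega^n$, where $\Delta^{Ch}$ denotes the Chern Laplacian of $\omega$; the principal symbol of $L$ is therefore the (positive-definite) symbol of $\Delta^{Ch}$. The two cross terms $\i\del f\wedge\delb\omega^{n-1}$ and $-\i\delb f\wedge\del\omega^{n-1}$ contribute only first-order terms in $f$, with smooth real coefficients determined by $\del\omega^{n-1}$ and $\delb\omega^{n-1}$ (their sum is real since they are complex conjugates up to sign). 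No zeroth-order term survives, by construction, thanks to the Gauduchon hypothesis.

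Finally, since $M$ is compact without boundary, $f$ attains an interior maximum at some point $p\in M$. The Hopf strong maximum principle applied to a linear, strictly elliptic operator with no zeroth-order term then forces $f$ to be constant in a neighborhood of $p$, and hence globally on the connected manifold $M$. The main point of care is simply that the expansion and the principal-symbol computation are well-defined invariantly on $M$, which is an immediate check in a local holomorphic frame; beyond this, the argument requires only standard elliptic PDE on compact manifolds and does not involve any curvature input beyond $\i\del\delb\omega^{n-1}=0$.
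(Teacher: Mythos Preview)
Your proof is correct and takes a different, more self-contained route than the paper's. The paper's argument is a one-line reduction to the uniqueness statement in Gauduchon's theorem (Theorem~\ref{thm:Gauduchonmetric}): after adding a large constant so that $f+C>0$, both the constant function $1$ and $f+C$ are positive solutions of $\i\del\delb(g\,\omega^{n-1})=0$, and since $\omega$ is already Gauduchon, uniqueness of the Gauduchon factor up to scaling forces $f+C$ to be constant. Your approach instead unpacks the equation via the Leibniz rule, uses the Gauduchon hypothesis to kill the zeroth-order term, and applies the strong maximum principle directly to the resulting strictly elliptic operator with vanishing zeroth-order coefficient. The advantage of your route is that it is self-contained and does not invoke the full Gauduchon existence/uniqueness result (whose proof ultimately rests on the same elliptic theory anyway); the paper's version is shorter once Theorem~\ref{thm:Gauduchonmetric} is on the table. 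One small point: your final step uses connectedness of $M$, which is assumed throughout the paper (Definition~\ref{def:splittangent}) but is worth stating explicitly since the lemma is phrased in greater generality.
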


\begin{proof}
Adding a sufficiently large constant $C>0$, we may assume that $f+C>0$.
We have
\[
\i\del\delb((f+C)\omega^{n-1})=0.
\]
Thus by Theorem \ref{thm:Gauduchonmetric}, $f+C$ is constant, which
leads to our conclusion.
\end{proof}
We state a technical result for future use.
\begin{lem}
\label{lem:Gauduchonfactorcont} With notation as in Theorem \ref{thm:Gauduchonmetric},
the solution $f$ of (\ref{eq:gauduchoneqn}) depends continuously
on the choice of metric $\omega$ when $\omega\mapsto f$ is considered
as a map $C^{3}(\Lambda^{1,1}(M))\to C^{3}(M)$.
\end{lem}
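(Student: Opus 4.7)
The plan is a sequential compactness argument leveraging the uniqueness clause of Theorem~\ref{thm:Gauduchonmetric}. Suppose $\omega_k\to\omega_0$ in $C^3$ and let $f_k$ be the corresponding Gauduchon factors; the goal is to show $f_k\to f_0$ in $C^3$.

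I would begin by linearizing: setting $v=e^{(n-1)f}$ turns (\ref{eq:gauduchoneqn}) into the linear problem
\[
L_\omega v := \i\del\delb(v\,\omega^{n-1}) = 0, \qquad N_\omega(v) := \int_M v^{n/(n-1)}\omega^n = \int_M \omega^n.
\]
Because $\i\del\delb v\wedge\omega^{n-1} = \tfrac{1}{n}(\tr_\omega \i\del\delb v)\,\omega^n$, the principal part of $L_\omega$ is the Chern Laplacian of $\omega$, while the lower-order coefficients are polynomial expressions in $\omega$ and its derivatives up to order two. Since $\omega_k\to\omega_0$ in $C^3$ with $\omega_0>0$, the operators $L_{\omega_k}$ are uniformly elliptic with coefficients uniformly bounded in $C^{0,1}\subset C^{0,\alpha}$ for every $\alpha<1$ (using the embedding $C^3\hookrightarrow C^{2,1}$ on the compact manifold $M$).

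Next I would extract uniform two-sided positive bounds on $v_k:=e^{(n-1)f_k}$. By the strong maximum principle $v_k>0$, and the Krylov--Safonov Harnack inequality for the uniformly elliptic equations $L_{\omega_k}v_k=0$ yields $\sup_M v_k\le C\inf_M v_k$ with $C$ independent of $k$; coupled with the normalization $N_{\omega_k}(v_k)=\int_M\omega_k^n$, which converges to $\int_M\omega_0^n>0$, this forces $c^{-1}\le v_k\le c$ uniformly. Schauder estimates then give a uniform $C^{2,\alpha}$ bound on $v_k$, and a further differentiation of $L_{\omega_k}v_k=0$ (exploiting the uniform $C^3$ control on $\omega_k$) upgrades this to a uniform $C^3$ bound.

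Finally, passing to the limit: by Arzel\`a--Ascoli a subsequence $v_{k_j}$ converges in $C^{2,\beta}$ (for any $\beta<\alpha$) to some positive $v_\infty$; the $C^3$ convergence $\omega_{k_j}\to\omega_0$ allows passage to the limit in both $L_{\omega_{k_j}}v_{k_j}=0$ and the normalization, yielding a positive $v_\infty$ with $L_{\omega_0}v_\infty=0$ and $N_{\omega_0}(v_\infty)=\int_M\omega_0^n$. The uniqueness clause of Theorem~\ref{thm:Gauduchonmetric} identifies $v_\infty=v_0:=e^{(n-1)f_0}$. A standard subsequence-of-subsequence argument promotes this to convergence of the full sequence, and the uniform $C^3$ bound upgrades the convergence to $C^3$. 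Since $v_0\ge c^{-1}>0$, the logarithm is smooth on the common range of the $v_k$, so $f_k=\tfrac{1}{n-1}\log v_k\to f_0$ in $C^3$. The main obstacle is establishing the two-sided positive Harnack bound uniformly in $k$ for the non-self-adjoint, non-divergence operators $L_{\omega_k}$; it rests on the uniform ellipticity and coefficient estimates from the linearization step, all of which are enforced by $\omega_k\to\omega_0$ in $C^3$ with $\omega_0$ positive definite.
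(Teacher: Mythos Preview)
Your approach is genuinely different from the paper's: you argue by sequential compactness (uniform a~priori bounds, Arzel\`a--Ascoli, and the uniqueness in Theorem~\ref{thm:Gauduchonmetric}), whereas the paper applies the Implicit Function Theorem for Banach spaces to the map $G(f,\omega)=(\i\del\delb(e^{(n-1)f}\omega^{n-1}),\,\frac{1}{\int\omega^n}\int e^{nf}\omega^n-1)$, verifying that $\partial_fG$ is a linear isomorphism via Lemma~\ref{lem:fconst} and the solvability of the Chern--Poisson equation. Your route is more hands-on and yields explicit bounds; the paper's route is shorter and gives local $C^1$ dependence (not merely continuity) for free.

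There is, however, a gap at the end of your argument. You assert that ``a further differentiation of $L_{\omega_k}v_k=0$'' produces a uniform $C^3$ bound on $v_k$, and that this together with $C^{2,\beta}$ convergence ``upgrades the convergence to $C^3$.'' Neither step is justified. First, the zeroth-order coefficient of $L_{\omega_k}$ involves $\del\delb\omega_k^{n-1}$, hence after one differentiation the coefficients of the equation for $\partial v_k$ are only $C^0$; Schauder theory does not apply, and $L^p$ theory gives only $v_k\in W^{3,p}\hookrightarrow C^{2,\alpha}$, not $C^3$. Second, even granting a uniform $C^3$ bound, precompactness in $C^3$ would require a bound in a strictly stronger norm (e.g.\ $C^{3,\epsilon}$), which is unavailable with only $C^3$ control on $\omega_k$; a uniform $C^3$ bound plus $C^{2,\beta}$ convergence does not imply $C^3$ convergence (consider $k^{-3}\sin(kx)$). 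Your argument does cleanly deliver continuity into $C^{2,\alpha}$ for every $\alpha<1$, which in fact suffices for the sole application of this lemma in the paper (the intermediate-value step in Theorem~\ref{t:TMAsolved}); but to obtain the $C^3$ target as stated you would need an additional idea---for instance, working with $\omega$ in $C^{3,\alpha}$ from the outset, or appealing to an IFT argument at the last step.
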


\begin{proof}
We denote
\[
C_{0}^{1}(\Lambda^{n,n})=\{\eta\in C^{1}(\Lambda^{n,n})|\int_{M}\eta=0\}.
\]
We consider the following map 
\begin{align*}
G:C^{3}(M)\times C^{3}(\Lambda^{1,1}) & \to C_{0}^{1}(\Lambda^{n,n})\times\mathbb{R}\\
(f,\omega) & \mapsto(\i\del\delb(e^{(n-1)f}\omega^{n-1}),\frac{1}{\int_{M}\omega^{n}}\int_{M}e^{nf}\omega^{n}-1).
\end{align*}
Suppose that $G(f_{0},\omega_{0})=0$ for some $(f_{0},\omega_{0})\in C^{3}(\Lambda^{1,1})\times C^{3}(M)$,
then the linearized map is given as follows:
\begin{align*}
\delta G|_{(f_{0},\omega_{0})}(\delta f,0) & =(\i\del\delb(\delta fe^{(n-1)f_{0}}\omega_{0}^{n-1}),\frac{n}{\int_{M}\omega_{0}^{n}}\int_{M}\delta fe^{nf_{0}}\omega_{0}^{n}).
\end{align*}
We claim that $\delta G|_{(f_{0},\omega_{0})}(*,0)$ is bijective. 

We consider the injectivity first. Note that $\delta G|_{(f_{0},\omega_{0})}(\delta f,0)=0$
is equivalent to the following
\begin{equation}
\begin{cases}
\i\del\delb(\delta fe^{(n-1)f_{0}}\omega_{0}^{n-1})=0,\\
\int_{M}\delta fe^{nf_{0}}\omega_{0}^{n}=0.
\end{cases}\label{eq:injec}
\end{equation}
Since $G(f_{0},\omega_{0})=0$, $e^{f_{0}}\omega_{0}$ is Gauduchon.
Therefore we apply Lemma \ref{lem:fconst} to the first equation of
(\ref{eq:injec}) to see that $\delta f$ is constant. Then, by the
second equation of (\ref{eq:injec}), we have concluded that $\delta f\equiv0.$ 

We then consider the surjectivity. Let $ge^{nf_{0}}\frac{\omega_{0}^{n}}{n!}\in C_{0}^{1}(\Lambda^{n,n})$
and $r\in\mathbb{R}$ and consider the following linear elliptic equation:
\begin{equation}
\begin{cases}
\i\del\delb(\delta fe^{(n-1)f_{0}}\omega_{0}^{n-1})=ge^{nf_{0}}\frac{\omega_{0}^{n}}{n!},\\
\frac{1}{\int_{M}\omega_{0}^{n}}\int_{M}\delta fe^{\frac{n}{n-1}f_{0}}\omega_{0}^{n}=r.
\end{cases}\label{eq:surjec}
\end{equation}
As the right hand side of the first equation of (\ref{eq:surjec})
has $ge^{nf_{0}}\frac{\omega_{0}^{n}}{n!}\in C_{0}^{1}(\Lambda^{n,n})$,
we have $\int_{M}g(e^{f_{0}}\omega_{0})^{n}=0.$ Therefore, the first
part of (\ref{eq:surjec}) can be solved by standard elliptic theory.
Pick any such solution $\hat{\delta f}$, then we may re-normalize
this to 
\[
\delta f=\hat{\delta f}+\left(r-\frac{1}{\int_{M}\omega_{0}^{n}}\int_{M}\hat{\delta f}e^{\frac{n}{n-1}f_{0}}\omega_{0}^{n}\right),
\]
which satisfies the second equation of (\ref{eq:surjec}). We have
thus established the surjectivity. 

Now that we have established the claim that the linearized operator
is bijective, we may apply the Implicit Function Theorem for Banach
Spaces \cite[Theorem 17.6]{gt} to show that the implicit function
$\omega\mapsto f_{\omega},$ defined by $G(f_{\omega},\omega)=0$
is continuous.
\end{proof}
\begin{defn}
Notation as above. We define the \emph{Chern Laplacian} as 
\[
\Delta_{\omega}u:=\frac{\i\del\delb u\wedge\omega^{n-1}}{\omega^{n}}.
\]
\end{defn}

The Poisson equation for the Chern Laplacian has a well-understood
classical solvability theory coming from the Fredholm alternative.
\begin{thm}
\label{thm:19}\cite{gauduchon:nulle,CTW} Let $\omega$ be an arbitrary
Hermitian metric on $M^{n}$ with Gauduchon factor $e^{f}$. Then,
for $v\in C^{\infty}(M)$, the Chern-Poisson equation, 
\[
\Delta_{\omega}u=v
\]
admits a unique smooth solution $u$ if and only if 
\[
\int_{M}ve^{(n-1)f}\omega^{n}=0.
\]
\end{thm}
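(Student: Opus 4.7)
The plan is to recognize the Chern Laplacian $\Delta_{\omega}$ as a linear elliptic operator of order two on $M^n$ and apply the Fredholm alternative, with the Gauduchon factor $e^{(n-1)f}$ producing the integrability condition.

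First I would establish necessity. If $\Delta_\omega u = v$, then (up to a positive combinatorial factor absorbed into $\omega^n$) we have the identity $n\, i\del\delb u\wedge\omega^{n-1} = v\,\omega^n$. Integrating against the weight $e^{(n-1)f}$ and integrating by parts twice gives
\[\int_M v\, e^{(n-1)f}\,\omega^n \; = \; n\int_M i\del\delb u\wedge e^{(n-1)f}\omega^{n-1} \; = \; n\int_M u\cdot i\del\delb\!\left(e^{(n-1)f}\omega^{n-1}\right).\]
By the definition of the Gauduchon factor, $e^f\omega$ is Gauduchon, i.e.\ $i\del\delb((e^f\omega)^{n-1}) = i\del\delb(e^{(n-1)f}\omega^{n-1}) = 0$, so the right side vanishes. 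This proves the stated orthogonality is necessary.

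For sufficiency I would work on Sobolev spaces and view $\Delta_\omega\colon H^{k+2}(M)\to H^k(M)$ as a bounded operator. The principal symbol is $\tfrac{1}{n}\omega^{i\bar j}\xi_i\xi_{\bar j}$, which is positive definite, so $\Delta_\omega$ is uniformly elliptic, hence Fredholm of index zero. To compute the kernel, note that $\Delta_\omega u = 0$ with the strong maximum principle (applicable because $\Delta_\omega$ has no zeroth-order term) forces $u$ to be constant; equivalently, one can pair $\Delta_\omega u = 0$ with $u\, e^{(n-1)f}\omega^n$ and integrate by parts using the Gauduchon identity above to obtain $\int_M |\del u|^2_\omega\, e^{(n-1)f}\omega^n = 0$. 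Thus $\ker\Delta_\omega = \mathbb{R}$, and by the index being zero, the cokernel is one-dimensional as well. The linear functional $v\mapsto \int_M v\, e^{(n-1)f}\omega^n$ is a nonzero element of the cokernel by the necessity argument, hence spans it, so the stated condition is exactly the integrability condition provided by the Fredholm alternative.

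Finally, a weak $L^2$ solution produced by the Fredholm alternative is smooth by standard elliptic regularity applied iteratively to $\Delta_\omega$, and uniqueness holds modulo constants in view of the kernel calculation (which is how the statement should be interpreted). The main technical point, and the step where care is required, is the correct identification of the cokernel: one must verify both that $e^{(n-1)f}\omega^n$ annihilates the image via the integration by parts above and that, conversely, every element of the cokernel is a multiple of it, which follows from the index-zero Fredholm property combined with the one-dimensional kernel computation.
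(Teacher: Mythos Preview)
Your proposal is correct and matches the approach the paper itself invokes: the paper does not give its own proof of this theorem but simply cites \cite{gauduchon:nulle,CTW} and remarks that the result follows from the Fredholm alternative, which is precisely the argument you outline. Your identification of the cokernel via the Gauduchon condition and the index-zero Fredholm property is the standard way to fill in the details, and your parenthetical remark that uniqueness should be read modulo constants is the correct reading of the statement.
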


Furthermore, by standard PDE techniques (see \cite[Appendix A]{Alesker2013309}),
the Chern Laplacian admits a Green's function with a lower bound.
\begin{thm}
\label{thm:Green}\cite[Appendix A]{Alesker2013309} For any second-order,
elliptic operator $A$ on a compact manifold $M$, there exists a
Green's function $G:M\times M\to\mathbb{R}$ satisfying 
\begin{enumerate}
\item $\int_{M}G(x,y)A\phi(y)dV(y)=\phi(x)-\frac{1}{|M|}\int_{M}\phi dV$
for all $\phi\in C^{\infty}(M)$,
\item $G(x,y)$ is smooth outside the diagonal $\Delta\subset M\times M$,
\item $G(x,y)\geq-D_{1}$ a.e. on $M\times M$ for some constant $D_{1}>0$,
and
\item For any fixed $x\in M$, $\|G(x,\cdot)\|_{L^{1}(M)}<D_{2}$ for a
constant $D_{2}>0$.
\end{enumerate}
\end{thm}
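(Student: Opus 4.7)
The plan is to construct the Green's function by the classical parametrix method and then correct by the Fredholm alternative, with the quantitative bounds (3) and (4) extracted directly from the explicit local form of the parametrix.

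First, I would build a local parametrix. Near each point $y\in M$, pick a normal coordinate chart and freeze the principal coefficients of $A^{*}$ at $y$. The resulting constant-coefficient, second-order elliptic operator admits a classical fundamental solution $\Gamma(x,y)$ of the form $c_{n}|x-y|^{2-n}$ when $n=\dim_{\R}M\geq 3$, and $c_{2}\log|x-y|$ when $n=2$. Multiplying by a cutoff supported in a small geodesic ball and gluing via a partition of unity along the diagonal of $M\times M$ produces a global kernel $E_{0}(x,y)$ satisfying $A^{*}_{y}E_{0}(x,y)=\delta_{x}(y)+K_{1}(x,y)$ with $K_{1}\in C^{\infty}(M\times M)$.

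Second, I would correct $E_{0}$ to obtain the true Green's function. Because $A$ is elliptic on a closed manifold, $A$ and $A^{*}$ are Fredholm and, in the present context (the relevant operator is the Chern Laplacian, cf.\ Theorem \ref{thm:19}), both kernels coincide with the constants. Hence $\phi-|M|^{-1}\int_{M}\phi\,dV$ always lies in the image of $A^{*}$, and a Neumann-type iteration (or equivalently, one inversion of $I+K_{1}$ on the complement of the kernel) modifies $E_{0}$ into a kernel $G$ satisfying property (1). Property (2) is then immediate from elliptic regularity: off the diagonal $G$ solves a homogeneous elliptic equation with smooth coefficients. For (3) and (4), near $\Delta$ the correction $G-E_{0}$ is bounded and $E_{0}$ agrees with $\Gamma(x,y)$ up to lower-order smooth terms; for $n\geq 3$ the singular part $c_{n}|x-y|^{2-n}$ is strictly positive, so $G$ is bounded below near $\Delta$ and trivially bounded away from $\Delta$ by smoothness and compactness of $M$, giving (3), while the local integrability of $|x-y|^{2-n}$ (or $|\log|x-y||$) over a ball of fixed radius yields (4) uniformly in $x$.

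The main subtlety I expect is verifying the algebraic assumption behind (1): that $\ker A^{*}=\R$, so that the projector onto the image of $A^{*}$ is exactly averaging against the uniform measure $|M|^{-1}dV$ and not a weighted Gauduchon-type measure. In the paper's setting this is underwritten by Theorems \ref{thm:Gauduchonmetric} and \ref{thm:19}; for a genuinely general elliptic $A$, the projector might need to be reweighted, but then the same parametrix argument still yields an analogous Green's function with a weighted projection. Tracking the constants $D_{1},D_{2}$ through the construction is routine, as they depend only on the ellipticity constants, coefficient bounds, and the geometry of $(M,A)$.
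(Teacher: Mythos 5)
This theorem is cited from Alesker--Shelukhin's appendix without proof, so there is no paper proof to compare against; your proposal is an independent verification, and the parametrix-plus-Fredholm framework you describe is the standard and appropriate route. Two steps, however, are not yet justified as written.

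First, freezing the principal coefficients at $y$, cutting off, and gluing gives a kernel $E_{0}$ whose remainder $A^{*}_{y}E_{0}(x,y)-\delta_{x}(y)$ is not smooth across the diagonal. It is only \emph{less singular} than $E_{0}$ (of order $|x-y|^{1-n}$, and worse once first- and zeroth-order terms are included). To obtain a remainder $K_{1}\in C^{\infty}(M\times M)$ one must iterate the construction (Levi's method), convolving the error kernel with itself finitely many times until it becomes smooth, or invoke the pseudodifferential parametrix, which produces a smoothing remainder in one step. Your proposal asserts $K_{1}\in C^{\infty}$ without either of these ingredients.

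Second, and more seriously, the claim that "for $n\geq 3$ the singular part $c_{n}|x-y|^{2-n}$ is strictly positive" conceals a sign that depends on the convention for $A$ and is not automatic. With the analyst's convention $\Delta=\sum\partial_{i}^{2}$ (so $\Delta u\leq 0$ at a maximum), the fundamental solution of $\Delta\Phi=\delta_{0}$ in $\R^{n}$ is $\Phi(x)=-\tfrac{1}{(n-2)\omega_{n-1}}|x|^{2-n}$, which tends to $-\infty$ at the origin; in that case the resulting Green's function is bounded \emph{above}, not below, and property (3) fails. The operator actually used in the paper, the Chern Laplacian $\Delta_{\omega}u=\tfrac{\i\del\delb u\wedge\omega^{n-1}}{\omega^{n}}$, has precisely this sign convention. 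Replacing $A$ by $-A$ negates $G$ and swaps a lower bound for an upper bound, so the statement "for any second-order elliptic operator" cannot hold with (1) and (3) simultaneously: a sign hypothesis on $A$ (or a compensating sign in (1)) is required, and you must actually verify that the sign works out for the operator at hand. This is not cosmetic, since the application in Theorem \ref{thm:L1apriori} genuinely needs the lower bound to run the $\|u\|_{1}$ estimate. Once the sign and the Levi iteration are handled, the rest of your argument is fine: (2) follows from interior elliptic regularity off the diagonal, (4) from local integrability of $|x-y|^{2-n}$ (resp.\ $\log|x-y|$ for $n=2$) together with smoothness away from the diagonal, and the projector caveat you raise at the end is correctly resolved in the paper's setting because $\ker\Delta_{\omega}$ consists of the constants when $\omega$ is Gauduchon (Theorems \ref{thm:Gauduchonmetric} and \ref{thm:19}).
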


Next, we state an algebro-geometric result due to Beauville which,
in our language, gives a classification of the compact, complex surfaces
which admit a non-trivially split tangent bundle. This result indicates many non-trivial examples. In addition, it implies the existence of a global choice of coordinates.
\begin{thm}
\label{thm:21}\cite[Theorem C]{Beauville} Let $M$ be a compact
complex surface. $M$ has a nontrivial split tangent bundle if and
only if one of the following occurs:
\begin{itemize}
\item The universal covering space of $M$ is a product of two simply-connected
Riemann surfaces $S_{1}\times S_{2}$ and the group $\pi_{1}(M)$
acts diagonally on $S_{1}\times S_{2}$; in this case, the splitting
of $T_{\C}M$ lifts to the direct sum decomposition $T_{\C}(U\times V)=T_{\C}U\oplus T_{\C}V$.
\item $M$ is a Hopf surface, with the universal covering space $\C^{2}\setminus \{(0,0)\}$.
It has $\pi_{1}(M)\cong\Z\oplus\Z/m\Z$, for some integer $m\geq1$;
it is generated by diagonal automorphisms $(z,w)\mapsto(e^{a}z,e^{b}w)$
with $a$, $b\in\C$, $\Re (a) >0$, $\Re (b)>0$ and $(z,w)\mapsto(\lambda z,\mu w)$ where $\lambda$
and $\mu$ are primitive $m$-th roots of unity.
\end{itemize}
\end{thm}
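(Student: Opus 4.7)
The plan is to follow Beauville's strategy, which combines the theory of holomorphic foliations with the Enriques-Kodaira classification of compact complex surfaces. Each holomorphic line sub-bundle $T^\pm \subset T^{1,0}M$ is trivially involutive, so by the holomorphic Frobenius theorem it integrates to a nonsingular holomorphic foliation $\mathcal{F}^\pm$ whose normal bundle is isomorphic to $T^\mp$. The splitting gives the Chern identities
\[
c_1(M) = c_1(T^+) + c_1(T^-), \qquad c_2(M) = c_1(T^+) \cdot c_1(T^-),
\]
which, combined with Noether's formula, supply the numerical constraints that drive a case analysis by Kodaira dimension $\kappa(M)$.

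If $\kappa(M) \ge 0$, I would invoke Bogomolov's theorem on holomorphic foliations of compact surfaces together with the classification of minimal surfaces of non-negative Kodaira dimension. These force, after passing to a suitable finite \'etale cover, both foliations to arise from holomorphic fibrations $M \to C_1$ and $M \to C_2$ onto compact Riemann surfaces; transversality of the splitting then yields an \'etale map $M \to C_1 \times C_2$, so the universal cover splits as $\widetilde{C}_1 \times \widetilde{C}_2$ with $\pi_1(M)$ acting diagonally, landing in the first alternative. If $\kappa(M) = -\infty$, the rational and ruled cases reduce similarly after contracting any exceptional curves preserved by the splitting. The remaining situation is Class VII, where one uses the classification of minimal Class VII surfaces: Inoue surfaces of type $\mathcal{S}_M$ have universal cover $\mathbb{H} \times \mathbb{C}$ and so also fall into the product alternative; Kato surfaces (with $b_2 > 0$ and a global spherical shell) must be ruled out because their defining contracting germ admits at most one invariant holomorphic foliation, preventing a transverse splitting; and Hopf surfaces fall into the second alternative via Kodaira's theorem identifying the universal cover with $\mathbb{C}^2 \setminus \{(0,0)\}$.

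The final step in the Hopf case is to pin down the form of $\pi_1(M)$: any deck transformation must preserve the global splitting $T^+ \oplus T^-$, hence must preserve the two coordinate axes of $\mathbb{C}^2$ (possibly permuting them). A direct analysis of $\operatorname{Aut}(\mathbb{C}^2 \setminus \{(0,0)\})$ then shows that, after a finite-index refinement, such an element is a diagonal map $(z, w) \mapsto (e^a z, e^b w)$ with $\operatorname{Re} a, \operatorname{Re} b > 0$ together with a possible scalar by a primitive $m$-th root of unity, yielding $\pi_1(M) \cong \mathbb{Z} \oplus \mathbb{Z}/m\mathbb{Z}$. The main obstacle is the Class VII step, particularly excluding Kato surfaces with larger $b_2$; this requires the deeper input of the Bogomolov-Li-Yau-Teleman classification program or a careful inspection of Dloussky's normal forms for contracting germs. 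The remaining verifications---Chern-number bookkeeping, Frobenius, and the diagonal form of the Hopf action---are essentially routine once the classification is in hand.
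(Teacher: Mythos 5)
This theorem is cited in the paper as \cite[Theorem C]{Beauville}; the paper itself gives no proof, so there is no internal argument to compare against. What you have written is a plausible high-level sketch of how such a classification might be obtained, but there are several genuine gaps that keep it from being a reconstruction of the proof.

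The most serious issue is the exclusion of minimal Class VII surfaces with $b_2 > 0$. You assert that a Kato surface ``admits at most one invariant holomorphic foliation'' because of its contracting germ, but you do not justify this, and it is essentially the conclusion in disguise. The Chern-number bookkeeping you set up does not rule these surfaces out either: from $c_1(T^+) + c_1(T^-) = c_1$ and $c_1(T^+)\cdot c_1(T^-) = c_2$ one gets $c_1(T^+)^2 + c_1(T^-)^2 = c_1^2 - 2c_2 = -3b_2$, which is perfectly consistent with the negative definiteness of the intersection form on a Class VII surface with $b_2 > 0$. A real argument is needed here. Second, invoking the ``Bogomolov--Li--Yau--Teleman classification program'' is anachronistic: Teleman's results on Class VII surfaces with small $b_2$ postdate Beauville's 1998 paper, and the full GSS conjecture is still open, so Beauville's proof cannot rely on a classification of Class VII. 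Beauville necessarily argues without assuming this classification, which is exactly why the Class VII step is the hard part. Third, in the Hopf case you assert that a deck transformation of $\C^2 \setminus \{0\}$ preserving the splitting must preserve the two coordinate axes; but you have not shown that the lift of $T^\pm$ to the universal cover is the standard coordinate splitting. Since the generic primary Hopf surface (with contraction $(z,w)\mapsto(\alpha z + \lambda w^m, \beta w)$, $\lambda\neq 0$) is excluded from the statement, establishing precisely which Hopf surfaces have split tangent is part of the content, not something to be asserted. The rational/ruled case with $\kappa=-\infty$ is also waved at rather than argued; $\mathbb P^2$ must be excluded and only projectivizations of projectively flat bundles survive, which needs more than ``contracting exceptional curves.'' So while the organizing principle by Kodaira dimension and the Bogomolov-foliation viewpoint are on the right track, the decisive steps are missing or circular.
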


A direct consequence is the following:
\begin{lem}
\label{lem:localcoordinate}\cite[Lemma 2.3]{s:borninfeld} If $M$
is a complex surface with split tangent, then locally there exists
holomorphic coordinate $(z,w)\in U\times V\subset\mathcal{\mathbb{C\times\mathbb{C}}}$
such that locally $T^{+}=\mathrm{span\{\frac{\partial}{\partial z}\}}$
and $T^{-}=\mathrm{span}\{\frac{\partial}{\partial w}\}$. 
\end{lem}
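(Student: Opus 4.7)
The plan is to invoke the holomorphic Frobenius theorem for each of the two holomorphic sub-bundles $T^{\pm}$ separately. Since both $T^{+}$ and $T^{-}$ are holomorphic sub-bundles of rank one, each is automatically an involutive holomorphic distribution (the bracket condition is vacuous for line fields). Consequently each determines, in a neighborhood of any chosen point $p\in M$, a holomorphic foliation by one-dimensional complex submanifolds.

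Concretely, fix $p\in M$ and let $\sigma^{\pm}$ be a non-vanishing local holomorphic section of $T^{\pm}$ defined near $p$. By holomorphic Frobenius (equivalently, by straightening $\sigma^{\pm}$ with its holomorphic flow), there exist a neighborhood $W$ of $p$ and holomorphic submersions $F^{\pm}\colon W\to\mathbb{C}$ whose fibres are the leaves integrating $T^{\pm}$; by construction $dF^{\pm}$ vanishes identically on $T^{\pm}$. Because the decomposition $T^{1,0}M=T^{+}\oplus T^{-}$ is direct, $dF^{+}_{p}$ and $dF^{-}_{p}$ are linearly independent on $T^{1,0}_{p}M$, so by the holomorphic inverse function theorem the pair $(z,w):=(F^{-},F^{+})$ gives a holomorphic chart on some smaller polydisc $U\times V\ni p$.

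Finally, I would verify the splitting statement in these coordinates. Since $dw=dF^{+}$ annihilates $T^{+}$, and the line bundle $T^{+}$ is cut out inside $T^{1,0}M$ precisely by the equation $dw=0$, we have $T^{+}=\ker dw=\operatorname{span}\{\partial/\partial z\}$; symmetrically $T^{-}=\ker dz=\operatorname{span}\{\partial/\partial w\}$. The only real ingredient is the holomorphic Frobenius theorem, and for line distributions its proof reduces to the existence and holomorphic dependence of solutions to a single holomorphic ODE, so there is no substantial obstacle; the remainder is merely the inverse function theorem applied to $(F^{-},F^{+})$.
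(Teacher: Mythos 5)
Your argument is correct, and it is the natural local proof. Since $T^{+}$ and $T^{-}$ are rank-one holomorphic sub-bundles, involutivity is automatic, the holomorphic Frobenius theorem (or equivalently, straightening a non-vanishing holomorphic vector field via its holomorphic flow) produces local first integrals $F^{\pm}$ whose differentials kill $T^{\pm}$, and transversality of the splitting at $p$ makes $(F^{-},F^{+})$ a chart by the holomorphic inverse function theorem. The identification $T^{+}=\ker dw=\operatorname{span}\{\partial/\partial z\}$ then follows because $\ker dw$ and $T^{+}$ are both rank-one and one contains the other.

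The paper does not supply its own proof: it cites Streets's Lemma 2.3 of \cite{s:borninfeld} and presents the statement as a ``direct consequence'' of Beauville's classification (Theorem \ref{thm:21}), in which a compact surface with split tangent is uniformized by a product $S_{1}\times S_{2}$ or by $\mathbb{C}^{2}\setminus\{0\}$ with the splitting lifting to the obvious coordinate one. That route is global and uses compactness essentially. Your argument is purely local, needs no compactness and no classification input, and is closer to what one would expect the cited lemma to actually prove; it is both more elementary and more general, at the modest cost of invoking the holomorphic Frobenius theorem rather than quoting a deep classification result.
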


Furthermore, we also list a theorem of Apostolov and Gualtieri, which,
in our setting, further classifies surfaces with split tangent up
to bi-holomorphism.
\begin{thm}
\cite[c.f. Theorem 1]{ag:gkwithsplit} Any compact, complex surface
$M$ with split tangent is biholomorphic to one of the following:
\begin{enumerate}
\item[(a)] a geometrically ruled complex surface which is the projectivization
of a projectively flat holomorphic vector bundle over a compact Riemann
surface;
\item[(b)] a bi-elliptic complex surface, i.e. a complex surface finitely covered
by a complex torus;
\item[(c)] a compact complex surface of Kodaira dimension 1 and even first Betti
number, which is an elliptic fiberation over a compact Riemann surface,
whose only singular fibers are multiple smooth elliptic curves;
\item[(d)] a compact complex surface of general type, uniformized by the product
of two hyperbolic planes $\mathbb{H}\times\mathbb{H}$ and with fundamental
group acting diagonally on the factors;
\item[(e)] a quotient of a primary Hopf surface by a finite cyclic group action; or
\item[(f)] an Inoue surface of type $\mathcal{S}_{M}$.
\end{enumerate}
All of these cases admit pluriclosed metrics of split type.
\end{thm}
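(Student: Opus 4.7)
The starting point is Beauville's theorem (Theorem \ref{thm:21}), which already splits the discussion into two broad scenarios: either the universal cover of $M$ is a product $S_1 \times S_2$ of simply-connected Riemann surfaces with $\pi_1(M)$ acting diagonally, or $M$ is covered by $\C^2 \setminus \{(0,0)\}$ with diagonal linear deck transformations. The Hopf case gives (e) directly (after absorbing the finite cyclic factor $\Z/m\Z$), so the real content is a refined analysis of the product case.

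In the product case, I would invoke the uniformization theorem for Riemann surfaces to write each $S_i \in \{\CP^1, \C, \mathbb{H}\}$ and then run through the possibilities, identifying the result inside the Enriques--Kodaira classification:
\begin{itemize}
\item If $S_1 = \CP^1$, projection onto the second factor exhibits $M$ as a $\CP^1$-fibration over the Riemann surface $S_2/\pi_1(M)$. Because the $\pi_1$-action respects the product, the $\CP^1$-fibers arise from a representation of $\pi_1$ into $\mathrm{PGL}(2,\C)$, giving a projectively flat bundle. This yields (a).
\item If both factors are $\C$, the diagonal $\pi_1$-action is by affine maps, and a finite-index subgroup acts by pure translations; the quotient is a complex torus and one lands in (b).
\item If $\{S_1,S_2\}=\{\C,\mathbb{H}\}$, the $\C$-fibers define an elliptic fibration over $\mathbb{H}/\pi_1$. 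Analyzing the representation of $\pi_1$ into $\mathrm{Aut}(\C)\times\mathrm{Aut}(\mathbb{H})$ produces a dichotomy: if the $\C$-action is by translations and the $\mathbb{H}$-action has even $b_1$, one obtains the properly elliptic surfaces of (c); if the $\C$-action carries non-trivial scaling tied to a hyperbolic isometry of $\mathbb{H}$, then the integrality constraints force the combined action to be generated by an element $g_0$ together with translations indexed by columns of a matrix $M\in\SL(3,\Z)$, producing precisely Definition \ref{inoue} and thus case (f).
\item If both factors are $\mathbb{H}$, (d) is immediate from the diagonal hypothesis.
\end{itemize}

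For the final sentence (each class admits a pluriclosed split-type metric), explicit constructions suffice. In the product-covered cases (a)--(d), any $\pi_1$-invariant product $g_1\oplus g_2$ of K\"ahler metrics on $S_1\times S_2$ descends to a K\"ahler, split-type, hence pluriclosed, metric on $M$. For the Hopf case (e), the Gauduchon--Ornea pluriclosed metrics \cite{Gauduchon-Ornea}, or equivalently the Streets--Ustinovskiy metrics $\omega_t^{SU}$ recalled just before Theorem \ref{uniformization}, provide the required examples; they descend under the finite cyclic quotient since the action is by diagonal automorphisms. For Inoue surfaces (f), the Tricerri metric $\omega_{a,b}$ defined just after Definition \ref{inoue} is visibly of split type, and a direct calculation verifies $\i\del\delb\omega_{a,b}=0$.

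The most delicate step is the mixed subcase $\C\times\mathbb{H}$: untangling the action of $\pi_1(M)$ to show that exactly the two possibilities (c) and (f) arise, and that the Inoue case forces the unimodular integrality $M\in\SL(3,\Z)$ with eigenvalues $\alpha\in\R_{>1}$ and $\beta,\bar\beta\in\C$ satisfying $\alpha|\beta|^2=1$. This amounts to showing that the holomorphic $\Z$-extension governing the action reduces to the presentation in Definition \ref{inoue}, which is essentially the content of Inoue's original construction and where the argument requires the most care.
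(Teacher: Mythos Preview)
The paper does not prove this theorem at all: it is stated in Section~2 as background, with the citation \cite[c.f.\ Theorem 1]{ag:gkwithsplit}, and no argument is given. There is therefore no ``paper's own proof'' to compare your proposal against; the authors are simply quoting the Apostolov--Gualtieri classification (itself a refinement of Beauville's Theorem~\ref{thm:21}) as a known result.

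That said, your outline is broadly the right shape for how the Apostolov--Gualtieri argument proceeds: start from Beauville, uniformize each factor, and run through the nine pairings $(S_1,S_2)$ with $S_i\in\{\CP^1,\C,\mathbb{H}\}$, locating each quotient inside the Enriques--Kodaira table. A few points in your sketch are imprecise. In the $\C\times\mathbb{H}$ subcase, the phrase ``the $\mathbb{H}$-action has even $b_1$'' is garbled; the first Betti number is an invariant of the quotient surface, and the dichotomy between (c) and (f) is governed by whether the linear part of the $\C$-action is trivial (giving an honest elliptic fibration with $b_1$ even) or not (forcing odd $b_1$ and the Inoue structure). Also, the derivation of the $\SL(3,\Z)$ integrality in the Inoue case is not something you can extract from the $\pi_1$-representation alone without invoking the discreteness and cocompactness of the action, which is where Inoue's original analysis enters; you correctly flag this as the delicate step, but your sketch does not indicate how it is actually carried out. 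Finally, for the last sentence on pluriclosed split metrics, your argument for (a)--(d) is fine, and the explicit metrics you cite for (e) and (f) are exactly what the paper itself relies on elsewhere.
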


\begin{rem}
Note that all of these manifolds admit K\"ahler metrics except (e)
and (f).
\end{rem}

In the rest of this section we recall the definition of the Bismut connection
and curvature of a Hermitian metric. Let $(M,I,h)$ be a Hermitian
manifold. Let $\omega$ be the corresponding Hermitian form. Let $\nabla^{LC}$
be the Levi-Civita connection of the Riemannian metric induced by
$h$. Bismut introduced the following connection which is compatible
with $h$ and $I,$ with a totally skew-symmetric torsion form.
\begin{defn}
\label{def:Bismut-data} \cite{bismut:localindexthm} For a pluriclosed
manifold $(M,I,h)$ with notations as above, the \emph{Bismut connection}
is defined as
\[
\nabla^{B}=\nabla^{LC}+\frac{1}{2}h^{-1}d^{c}\omega,
\]
where $d^{c}=\i(\partial-\bar{\partial})$. The curvature
tensor induced by $\nabla^{B}$ and the corresponding Ricci form of the first type are denoted
as $R_{B}\in\Lambda^{2}\otimes\Lambda^{1,1}$ and $\Ric_{B}=\operatorname{tr}_{g}R^{B}\in\Lambda^{2}(T^{*}M)$,
respectively. 
\end{defn}

\begin{rem}
    Note that in general, $R_B$ does not enjoy as many symmetries as in  Riemannian and K\"ahlerian cases. Therefore, there are several definitions of Ricci curvatures. In this paper, we consider only the particular type shown above.
\end{rem}
\begin{lem}
\label{lem:BismutRicci} Assume that $M$ is a complex surface with
split tangent and use notation as above. Then the $(1,1)$ part of
the Bismut Ricci form, $\Ric_{B}^{1,1}$, has the following local
representation
\begin{equation}
\Ric_{B}^{1,1}=-\Box(\log\det h^{+}-\log\det h^{-}),\label{eq:add88}
\end{equation}
where locally, $\omega^{+}=\i h_{i\bar{j}}^{+}v^{i}\wedge\bar{v}^{j}$
and $\omega^{-}=\i h_{\alpha\bar{\beta}}^{-}w^{i}\wedge\bar{w}^{\beta}$,
$\{v^{i}\}$ and $\{w^{\alpha}\}$ are basis for $T^{+}$ and $T^{-}$,
respectively.
\end{lem}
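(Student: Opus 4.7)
The plan is to reduce the lemma to the general split-tangent Bismut-Ricci formula (\ref{eq:add012}) cited in the introduction from Streets \cite[Proposition 8.20]{gfs}, and then to unpack this formula in coordinates adapted to the splitting.

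First, I would invoke Lemma \ref{lem:localcoordinate} to select, about any point of $M$, holomorphic coordinates $(z,w)$ with $T^+ = \mathrm{span}\{\partial/\partial z\}$ and $T^- = \mathrm{span}\{\partial/\partial w\}$. A split-type Hermitian metric then takes the diagonal form
$$\omega = \sqrt{-1}\bigl(h^+\,dz\wedge d\bar z + h^-\,dw\wedge d\bar w\bigr),$$
and the natural frames $v = dz$, $w = dw$ of $T^{+,*}$ and $T^{-,*}$ are each one-dimensional, so the matrices $(h^+_{i\bar j})$ and $(h^-_{\alpha\bar\beta})$ collapse to $1\times 1$ with entries $h^+$, $h^-$. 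In particular $\det h^\pm = h^\pm$ in these bases, so substituting into (\ref{eq:add012}) reproduces the right-hand side of (\ref{eq:add88}) on the chart.

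Next, I would check that the right-hand side of (\ref{eq:add88}) is well-defined globally. Under a change of local holomorphic frame for $T^{\pm,*}$, $\det h^\pm$ is multiplied by $|f|^2$ for some local nonvanishing holomorphic $f$, and
$$\Box\log|f|^2 = \sqrt{-1}\bigl(\partial_+\bar\partial_+ - \partial_-\bar\partial_-\bigr)\bigl(\log f + \log\bar f\bigr) = 0,$$
since $\log f$ is holomorphic and $\log\bar f$ antiholomorphic, so each term in the box operator annihilates one of the two pieces. Consequently the local expression patches across charts and agrees globally with $\Ric_B^{1,1}$.

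The main content of the argument is the general formula (\ref{eq:add012}) itself, which I would derive from Definition \ref{def:Bismut-data} by a direct curvature calculation. The key structural observation is that a split-type metric $h$ is block-diagonal with respect to $T^+\oplus T^-$, so both $h^{-1}$ and the torsion correction $\tfrac{1}{2}h^{-1}d^c\omega$ preserve the splitting. Hence the $T^+$ and $T^-$ sectors decouple in the Bismut curvature, and tracing to form $\Ric_B$ picks up opposite relative signs on the two sectors, which is precisely what converts the would-be Chern-Ricci $-\sqrt{-1}\partial\bar\partial\log\det h$ into the box-type expression $-\Box(\log\det h^+ - \log\det h^-)$. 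The hardest part of a self-contained derivation is the careful bookkeeping of the torsion terms produced by $d^c\omega = \sqrt{-1}(\partial-\bar\partial)\omega$ and checking that all cross terms $\partial_+\bar\partial_-$ and $\partial_-\bar\partial_+$ cancel after the trace, leaving only the pure sectors with the expected opposite signs.
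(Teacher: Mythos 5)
Your proposal matches the paper's own proof: the paper likewise reduces the lemma to the cited formula of Streets (Proposition 8.20 of the Garcia-Fernandez--Streets book / the associated paper), invokes Lemma~\ref{lem:localcoordinate} for adapted coordinates, and observes that the curvature computation is purely local. The only minor divergence is in the alternative sketch neither of you carries out in full --- the paper routes it through the Ivanov--Papadopoulos identity $\Ric_B = \Ric_C - d d_\omega^*\omega$, while you propose a direct expansion from Definition~\ref{def:Bismut-data} --- but that is a side remark in both cases, and your added check that $\Box\log|f|^2 = 0$ under holomorphic frame changes, while not in the paper, is correct and harmless.
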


\begin{proof}
Note that by Lemma \ref{lem:localcoordinate}, $M$ is generalized
Kähler as in \cite{gjs:nonkahler}. Then (\ref{eq:add88}) is essentially
Proposition 8.20 of \cite{gjs:nonkahler}, since curvature computation
is purely local. It also follows from the following formula (2.5)
of Ivanov-Popadopolous\cite{IvanovPapa}, 
\[
\Ric_{B}=\Ric_{C}-dd_{\omega}^{*}\omega,
\]
where $\Ric_{C}$ is the Ricci form of the Chern connection of $\omega$,
$d_{\omega}^{*}$ is the dual operator of $d$ with respect to metric
$\omega$. By choosing proper local coordinates and separating components
carefully using the Hodge star operator with respect to $\omega$,
a long computation leads to (\ref{eq:add88}).
\end{proof}
It is now clear from (\ref{eq:add88}) that $[\Ric_{B}^{1,1}]\in\mathcal{H}$
is invariant of the choice of local coordinate or Hermitian metric.
\begin{rem}
It is interesting to compare our cohomology $\mathcal{H}$ in this
specific setting with the usual Bott-Chern cohomology and Aeppli cohomology.
\end{rem}

\section{\label{sec:H}Structures of $\mathcal{H}$ and $\mathcal{H}'$}

In this section, we focus on the case where $M$ is a compact, complex
surface with  split tangent  and we study cohomology
groups $\mathcal{H}$ and $\mathcal{H}'$ as defined in Definition
\ref{def:splittypeclass}. In particular, we show that both of them
are finite dimensional for compact, complex surfaces with split tangent.
We further consider some representative examples in Theorem \ref{thm:21}. 

First, we define the following
\begin{defn}\label{def:nomix}
    For $M$ a complex surface with split tangent, let
    \begin{align}
        \mathcal{H}^+(M):=&\{[\eta]\in\mathcal{H}(M),\ \exists \eta\in [\eta], \mathrm{s.t.}\ \eta^2>0\}\\
        \mathcal{H}^-(M):=&\{[\eta]\in\mathcal{H}(M),\ \exists \eta\in [\eta], \mathrm{s.t.}\ \eta^+\geq 0, \eta^-\leq 0\}\\ \nonumber\cup&\{[\eta]\in\mathcal{H}(M),\ \exists \eta\in [\eta], \mathrm{s.t.}\ \eta^+\leq 0, \eta^-\geq 0\}.
    \end{align}
\end{defn}
We give the following elementary fact that will be repeated used later.

\begin{lem}\label{nomix}
    Notation as above.  Let $-\mathcal{P}(M)=\{ [-\eta]\in \mathcal{P}(M)\}$
    \begin{align}
      \label{q1}  \mathcal{H}^+(M)= \mathcal{P}(M)\cup  - \mathcal{P}(M),\\
       \label{q2} \mathcal{H}^+(M)\cap \mathcal{H}^-(M)=\emptyset\\
      \label{q3} \mathcal{P}(M)\cap -\mathcal{P}(M)=\emptyset.
    \end{align}
\end{lem}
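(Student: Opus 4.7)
The overall strategy will be to reduce all three identities to elementary sign bookkeeping in the local coordinates from Lemma \ref{lem:localcoordinate}, combined with the scalar maximum principle on the compact manifold $M$. Since $T^\pm$ are both complex line bundles, any split form writes locally as $\eta = \sqrt{-1}h^+ dz\wedge d\bar z + \sqrt{-1}h^- dw\wedge d\bar w$ with real scalars $h^\pm$, and $\Box\phi = \sqrt{-1}\phi_{z\bar z}\,dz\wedge d\bar z - \sqrt{-1}\phi_{w\bar w}\,dw\wedge d\bar w$. Positivity of $\eta$ becomes $h^+,h^->0$ pointwise, while $\eta^2 = 2\eta^+\wedge\eta^-$ is a positive multiple of $h^+ h^-$ times the volume form.

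I will start with (\ref{q1}). The inclusion $\mathcal{P}(M)\cup(-\mathcal{P}(M))\subset\mathcal{H}^+(M)$ is immediate from the formula for $\eta^2$. For the converse, if some $\eta\in[\eta]$ satisfies $\eta^2>0$ pointwise, then $h^+(p)h^-(p)>0$ for every $p\in M$, so both scalars are nowhere zero; continuity on the connected manifold $M$ then forces each of $h^\pm$ to have a constant sign, and the two signs must agree. Consequently $\eta>0$ or $-\eta>0$ globally, placing $[\eta]$ in $\mathcal{P}(M)\cup(-\mathcal{P}(M))$.

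Next I turn to (\ref{q3}). If $[\eta]\in\mathcal{P}(M)\cap(-\mathcal{P}(M))$, I pick representatives $\omega_1>0$ and $\omega_2<0$ in $[\eta]$. Their difference $\omega_1-\omega_2$ is strictly positive and class-trivial, so $\omega_1-\omega_2 = \Box\phi$ for some $\phi\in C^\infty(M)$. Extracting the $+$ component yields $\phi_{z\bar z}>0$ everywhere, which is impossible on compact $M$: at a global maximum of $\phi$ the Hessian is negative semidefinite, so $\phi_{z\bar z}=\tfrac14(\phi_{xx}+\phi_{yy})\leq 0$ there, a contradiction.

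Finally for (\ref{q2}), I use (\ref{q1}) to assume, up to replacing $[\eta]$ by $-[\eta]$, that $[\eta]\in\mathcal{P}(M)$ with positive representative $\omega$. I choose $\eta'\in[\eta]$ realizing one of the $\mathcal{H}^-(M)$ sign patterns; say $(\eta')^+\geq 0$ and $(\eta')^-\leq 0$. Writing $\omega-\eta'=\Box\phi$, the $-$ component satisfies $\omega^- - (\eta')^-\geq\omega^->0$ strictly, which in coordinates reads $\phi_{w\bar w}<0$ everywhere and contradicts Hessian positivity at a global minimum of $\phi$. The opposite sign pattern yields $\phi_{z\bar z}>0$ and contradicts the maximum principle instead. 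I do not anticipate any real obstacle beyond careful bookkeeping; the one-complex-dimensional nature of $T^\pm$ decouples the two components of $\Box$ so that the scalar maximum principle suffices, and the only place one appeals to more than pointwise data is the connectedness of $M$ in the argument for (\ref{q1}).
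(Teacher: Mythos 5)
Your proof is correct and follows essentially the same strategy as the paper's: write split forms in the local coordinates of Lemma~\ref{lem:localcoordinate}, reduce positivity to scalar sign conditions, and invoke the scalar maximum principle for $\i\del_\pm\delb_\pm u$ at a global extremum. The one organizational difference is that you prove~(\ref{q3}) first by comparing two representatives of opposite sign, and then derive~(\ref{q2}) by first applying~(\ref{q1}) to reduce to the case $[\eta]\in\mathcal{P}$; the paper instead argues~(\ref{q2}) directly from the definition of $\mathcal{H}^-$ and remarks that~(\ref{q3}) is similar. Your unification via the difference $\omega-\eta'=\Box\phi$ is tidy and your sign bookkeeping is careful throughout (in fact the paper's own sketch of~(\ref{q2}) has a sign slip -- it writes $\eta^-+\i\del_-\delb_- u>0$ rather than $\eta^--\i\del_-\delb_- u>0$ from the definition of $\Box$, so one should look at a minimum of $u$ rather than a maximum -- whereas your argument uses the extremum of the correct type).
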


\begin{proof}
    Equation (\ref{q1}) is straightforward by connectedness of $M$. See Definition \ref{def:splittangent}.
    
    Assume that (\ref{q2}) is not true, then there exists $[\eta]\in \mathcal{H}^+(M)\cap \mathcal{H}^-(M)$ which means there exists $\eta\in[\eta]$ such that $\eta=\eta^+ +\eta^-$ where $\eta^+$ and $\eta^-$ have different signs. In addition, there exists a smooth function $u$ such that either $\eta+\Box u > 0$ or $\eta +\Box u<0$. Consider the first case where $\eta^+\geq 0$, $\eta^-\leq 0$, $\eta^+ +\i\del_+\delb_+ u >0$ and $\eta^- +\i\del_-\delb_- u >0$, which indicates $\i\del_-\delb_- u >0$.   When considering the point $p\in M$ when $u$ achieves its maximum,    $\i\del_-\delb_- u\leq 0$. We clearly have a contradiction. The other cases are similar and we omit the detail here. We have proved (\ref{q2}).
    
    Equation (\ref{q3}) is proved similarly.
\end{proof}

Next, recall the involution $\iota$ defined in \ref{eq:involution-i}.
\begin{lem}
\label{lem:involution}Notation as above. We have
$$\iota(\Box u)=\pi(\i\del\delb u).$$
Also, the involution $\iota$
 induces an isomorphism, between
$\mathcal{\mathcal{H}}$ and $\mathcal{H}'$, which will also be denoted
as $\iota$. 
\end{lem}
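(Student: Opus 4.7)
The plan is to establish two operator identities and then extract the isomorphism formally. First I will check $\iota(\Box u) = \pi(\i\del\delb u)$ on functions by direct expansion using $\del = \del_+ + \del_-$ and $\delb = \delb_+ + \delb_-$:
\[
\i\del\delb u \;=\; \i\del_+\delb_+ u + \i\del_-\delb_- u + \i\del_+\delb_- u + \i\del_-\delb_+ u.
\]
The first two terms lie in $\Lambda^{+\ 1,1}\oplus\Lambda^{-\ 1,1}=\Lambda^s$, while the last two are mixed-type cross terms killed by $\pi$; hence $\pi(\i\del\delb u) = \i\del_+\delb_+ u + \i\del_-\delb_- u$. The $+$ and $-$ components of $\Box u$ are $\i\del_+\delb_+ u$ and $-\i\del_-\delb_- u$, so $\iota$, which negates the $-$ component, produces precisely the same sum.

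Next, I will extend $\Box$ to $\Lambda^s$ by the same formula and verify the companion identity $\Box(\iota\phi) = -\i\del\delb\phi$ for $\phi \in \Lambda^s$. The key input is that on a surface $T^\pm$ are line bundles; for $\phi^+ \in \Lambda^{+\ 1,1}$, antisymmetry forces $\del_+\phi^+ = \delb_+\phi^+ = 0$, and the same dimensional reason kills the cross derivatives $\del_+\delb_-\phi^+$ and $\del_-\delb_+\phi^+$ appearing in $\i\del\delb\phi^+$. A short bookkeeping then gives $\Box\phi^+ = -\i\del_-\delb_-\phi^+ = -\i\del\delb\phi^+$, and symmetrically $\Box\phi^- = +\i\del\delb\phi^-$. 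Summing with the sign flip contributed by $\iota$ yields $\Box(\iota\phi) = \Box\phi^+ - \Box\phi^- = -\i\del\delb\phi$.

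With both identities in hand, the isomorphism is formal. The first identity sends $\Box$-exact forms onto $\pi\i\del\delb$-exact forms, while the second shows that $\iota$ carries $\ker(\i\del\delb|_{\Lambda^s})$ onto $\ker(\Box|_{\Lambda^s})$. Hence $\iota$ descends to a well-defined linear map $\mathcal{H} \to \mathcal{H}'$, and since $\iota^2 = \Id$ on $\Lambda^s$, the descended map is its own inverse and therefore an isomorphism. The main obstacle is a sign subtlety at the form level: $\Box$ acts on $\phi^+$ and $\phi^-$ with opposite signs (namely $-\i\del\delb$ and $+\i\del\delb$ respectively), which is precisely why $\iota$, rather than the identity, is the correct intertwining involution between the two complexes.
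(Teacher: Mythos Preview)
Your proof is correct and fills in precisely the direct verification that the paper omits (the authors write only ``This is straightforward and we omit the proof here''). Your two operator identities $\iota(\Box u)=\pi(\i\del\delb u)$ and $\Box(\iota\phi)=-\i\del\delb\phi$ are exactly what is needed, and your use of the line-bundle condition on $T^\pm$ to kill the mixed and higher-type terms is the right mechanism on a surface.
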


This is straightforward and we omit the proof here. Additionally,
we will briefly note that a complex surface $M$, viewed as a Hermitian
manifold with a metric compatible to $I$, and the same smooth manifold
$M$, viewed as a Hermitian manifold with a metric compatible to $J$,
have opposite choices of orientation. Since $\mathcal{P}$ and $\mathcal{P}'$
are both defined with respect to $I,$ $\iota$ does not induce an
isomorphism between $\mathcal{P}$ and $\mathcal{P}'$.

We are ready to introduce an algebraic operation that characterizes linear
relations within $\mathcal{H}$.
\begin{defn}
\label{def:bracket}Define the following global  anti-symmetric, bi-linear
bracket on pluriclosed differential forms of split type:
\begin{equation}\label{bracket}  
\{\eta,\gamma\}=\int_M \iota(\eta)\wedge\gamma=\int_{M}\eta_{+}\wedge\gamma_{-}-\eta_{-}\wedge\gamma_{+}.
\end{equation}
\end{defn}
It is direct to check that for any pluriclosed split form $\eta$ and smooth
function $u$,
\[
\{\Box u,\eta\}=\int_M \i\del\delb u\wedge \eta=0
\]
Therefore, this anti-symmetric, bi-linear bracket descends to $\mathcal{H}$-classes, offering the
following criterion to distinguish different classes in $\mathcal{H}.$ 
\begin{thm}
\label{t:perpcriterion}For a compact, complex surface having split
tangent bundle, if $[\omega_{1}]\in\mathcal{H}$ and $[\omega_{2}]\in\mathcal{P}$,
then $[\omega_{1}]=c[\omega_{2}]$ if and only if $\{[\omega_{1}],[\omega_{2}]\}=0$.
\end{thm}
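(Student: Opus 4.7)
The plan is to treat the two directions of the biconditional separately; the forward direction is purely formal, while the reverse direction is the content of the result and follows by solving a linear Chern--Poisson equation together with invoking the rigidity in Lemma \ref{lem:fconst}.

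For the forward direction, I first verify that $\{\cdot,\cdot\}$ descends to $\mathcal{H}$: for any $u\in C^\infty(M)$ and any pluriclosed split $\mu$, Lemma \ref{lem:involution} gives $\iota(\Box u)=\pi(\sqrt{-1}\partial\bar\partial u)$, and since wedging any mixed $(+,-)$-type or $(-,+)$-type component of $\sqrt{-1}\partial\bar\partial u$ against the split form $\mu$ lands in $\Lambda^{\pm,2,0}$, which vanishes in dimension two, one computes
\[
\{[\Box u],[\mu]\}=\int_M\pi(\sqrt{-1}\partial\bar\partial u)\wedge\mu=\int_M\sqrt{-1}\partial\bar\partial u\wedge\mu=\int_M u\cdot\sqrt{-1}\partial\bar\partial\mu=0.
\]
Combined with the antisymmetry built into Definition \ref{def:bracket}, which forces $\{[\omega_2],[\omega_2]\}=0$, the forward implication is immediate: $\{c[\omega_2],[\omega_2]\}=0$ for any $c\in\mathbb R$.

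For the reverse direction, the goal is to exhibit $c\in\mathbb R$ and $u\in C^\infty(M)$ with $\omega_1-c\omega_2=\Box u$. The key identity, verified by the same local wedging argument in the split coordinates of Lemma \ref{lem:localcoordinate}, is
\[
\iota(\Box u)\wedge\omega_2=\sqrt{-1}\partial\bar\partial u\wedge\omega_2=\Delta_{\omega_2}u\cdot\omega_2^2,
\]
which translates the Chern Laplacian of $u$ with respect to $\omega_2$ into an $\iota$-pairing with $\omega_2$. The hypothesis reads $\int_M\iota(\omega_1)\wedge\omega_2=0$, and since $\omega_2$ is pluriclosed (hence Gauduchon in dimension two), Theorem \ref{thm:19} provides $u\in C^\infty(M)$ solving
\[
\Delta_{\omega_2}u=\frac{\iota(\omega_1)\wedge\omega_2}{\omega_2^2}.
\]

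Set $\xi:=\Box u-\omega_1\in\Lambda^s(M)$. By construction $\iota(\xi)\wedge\omega_2=0$ pointwise, which in split coordinates becomes $\xi^+\wedge\omega_2^-=\xi^-\wedge\omega_2^+$; dividing by the positive sections $\omega_2^\pm$ produces a single globally defined smooth function $\lambda\in C^\infty(M,\mathbb R)$ with $\xi=\lambda\omega_2$. Since both $\omega_1$ and $\Box u$ are pluriclosed, so is $\xi$, giving $\sqrt{-1}\partial\bar\partial(\lambda\omega_2)=0$; Lemma \ref{lem:fconst} then forces $\lambda$ to be a real constant, and rearranging yields $\omega_1=-\lambda\omega_2+\Box u$, so $[\omega_1]=c[\omega_2]$ with $c=-\lambda$. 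The main technical point is the Chern--Laplacian identity above, which relies crucially on the split hypothesis through the annihilation of mixed components; the conceptual crux is that pointwise proportionality $\xi=\lambda\omega_2$ automatically promotes to constant proportionality once pluriclosedness of $\xi$ is invoked via Lemma \ref{lem:fconst}. I do not anticipate substantive analytic obstacles beyond cleanly executing these identifications in the split decomposition.
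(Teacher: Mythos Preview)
Your proof is correct and follows essentially the same strategy as the paper: solve the linear Chern--Poisson equation $\sqrt{-1}\partial\bar\partial u\wedge\omega_2=\pm\iota(\omega_1)\wedge\omega_2$ (solvable precisely when the bracket vanishes, since $\omega_2$ is Gauduchon), deduce that the residual split form is pointwise proportional to $\omega_2$, and then invoke Lemma~\ref{lem:fconst} to upgrade the proportionality factor to a constant. Your packaging via $\xi=\Box u-\omega_1$ is a cosmetic variant of the paper's $\omega_u=\omega_1+\Box u$, and your treatment is arguably cleaner in not asserting positivity of the proportionality functions (which the paper writes but neither needs nor justifies when $[\omega_1]\notin\mathcal{P}$).
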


\begin{proof}
We use the notation $\omega_{u}=\omega_{1}+\Box u$. We consider the
equation
\begin{equation}
\omega_{u}^{+}\wedge\omega_{2}^{-}=\omega_{u}^{-}\wedge\omega_{2}^{+}.\label{eq:spanning}
\end{equation}
This can be rearranged as a Chern-Poisson equation.
\begin{equation}
\i\del\delb u\wedge\omega_{2}=\omega_{2}^{+}\wedge\omega_{1}^{-}-\omega_{2}^{-}\wedge\omega_{1}^{+}\label{eq:spanning2}
\end{equation}
Since $\omega_{2}$ is pluriclosed, hence it is Gauduchon. By Theorem
\ref{thm:19}, (\ref{eq:spanning2}) is solvable if and only if its
right-hand side integrates to zero over $M$, which is equivalent
to
\begin{equation}
\{\omega_{1},\omega_{2}\}=0.\label{eq:spanning3}
\end{equation}
Therefore, if (\ref{eq:spanning3}) holds, since $\omega_{2}>0,$
there exist functions $f_{\pm}>0$ such that $\omega_{u}^{\pm}=f_{\pm}\omega_{2}^{\pm}$.
Therefore, (\ref{eq:spanning}) becomes 
\begin{equation}
f_{+}=f_{-}.\label{eq:spanning4}
\end{equation}
Thus, $\omega_{u}=f_{+}\omega_{2}$. However, by Lemma \ref{lem:fconst}
and the fact that $\omega_{2}>0$ is Gauduchon, $f_{+}$ must be constant.
We have proved that $\omega_{1}+\Box u=c\omega_{2}$.

The other direction of the equivalence relation is straightforward
and we omit its proof.
\end{proof}
Notice, that for a given compact, complex surface with split tangent
bundle, Theorem \ref{t:perpcriterion} indicates that, as in the case
of the Calabi-Yau theorem, one can only hope for uniqueness to hold
within a cohomology class. To see this, we construct a one parameter
family of pluriclosed metrics with the same $\mathbb{\mathrm{Ric}}_{B}^{1,1}(M)$.
For a fixed pluriclosed metric $\omega_{0}$, let 
\begin{equation}
\tilde{\omega}_{t}=\exp(t)\omega_{0}^{+}+\exp(-t)\omega_{0}^{-}.\label{eq:add1}
\end{equation}
 By Theorem \ref{thm:Gauduchonmetric}, there exists $f_{t}$ smooth
such that 
\begin{equation}
\omega_{t}=\exp(f_{t})\tilde{\omega_{t}}\label{eq:add2}
\end{equation}
 is pluriclosed and $\int_{M}\omega_{t}^{2}$=1 for all $t$. It is
direct to check that this family of pluriclosed metrics have identical
Bismut Ricci curvature (see Equation \ref{eq:add88}). However, we
have the following observation: if $t>s,$ then by (\ref{eq:add1}),
\begin{equation}
\{\omega_{t},\omega_{s}\}=\{\exp(f_{t})\tilde{\omega_{t}},\exp(f_{s})\tilde{\omega_{s}}\}>0.\label{eq:add3}
\end{equation}
In other words, Theorem \ref{t:perpcriterion} implies $\omega_{t}$
and $\omega_{s}$ belong to different classes in the projectivization
of $\mathcal{H}$. 

Finally, we are ready to prove Theorem \ref{thm: dimension}. Due
to Lemma \ref{lem:involution}, it is sufficient to prove the following:
\begin{thm}
\label{t:dim2}Suppose that $M^{2}$ is a compact, complex surface
with split tangent bundle, and notation as above, then $\dim_{\R}\mathcal{H}=2$.
\end{thm}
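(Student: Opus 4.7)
The plan is to prove $\dim_\R \mathcal{H} = 2$ by sandwiching between $\geq 2$ and $\leq 2$, using the bracket $\{\cdot,\cdot\}$ of Definition \ref{def:bracket} as the principal tool for both bounds. Throughout, I fix a reference class $[\omega_0]\in\mathcal{P}$, which exists by the Apostolov-Gualtieri classification recorded above.

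For the lower bound, I will invoke the one-parameter family $\omega_t$ already constructed in (\ref{eq:add1})--(\ref{eq:add2}). Because $T^+$ and $T^-$ are line bundles, any wedge of two forms both lying in $\Lambda^{+\,1,1}$ (or both in $\Lambda^{-\,1,1}$) vanishes pointwise, so a direct calculation gives
\[
\{\omega_t, \omega_0\} \;=\; 2\sinh(t) \int_M e^{f_t + f_0}\, \omega_0^{+} \wedge \omega_0^{-} \;>\; 0 \qquad\text{for } t > 0.
\]
This is exactly the content of (\ref{eq:add3}). By antisymmetry of the bracket, any relation $[\omega_t] = c[\omega_0]$ would force $\{\omega_t,\omega_0\} = c\{\omega_0,\omega_0\} = 0$, a contradiction. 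Hence $[\omega_0]$ and $[\omega_t]$ are linearly independent in $\mathcal{H}$, so $\dim_\R \mathcal{H} \geq 2$.

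For the upper bound, I consider the linear functional
\[
\Phi : \mathcal{H} \longrightarrow \R, \qquad [\eta] \longmapsto \{[\eta], [\omega_0]\},
\]
which is well-defined on cohomology because the bracket descends (verified immediately after Definition \ref{def:bracket} by integration by parts together with the pluriclosed condition on $\eta$). Theorem \ref{t:perpcriterion} identifies $\ker \Phi$ with $\R[\omega_0]$, and $[\omega_0]\neq 0$ in $\mathcal{H}$ because a strictly positive form cannot equal $\Box u$ for any $u\in C^\infty(M)$ (evaluating $\i\del_+\delb_+ u$ at a maximum of $u$ on the compact surface $M$ produces a contradiction). Thus $\dim \ker \Phi = 1$, and since $\im \Phi \subseteq \R$,
\[
\dim_\R \mathcal{H} \;\leq\; \dim \ker \Phi + \dim \im \Phi \;\leq\; 2.
\]

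The only real content is absorbed into Theorem \ref{t:perpcriterion}, which reduces linear dependence in $\mathcal{H}$ to solvability of the Chern-Poisson equation $\Delta_{\omega_0} u = v$. This is exactly where the Gauduchon/pluriclosed hypothesis enters, through the Fredholm alternative of Theorem \ref{thm:19} and the rigidity Lemma \ref{lem:fconst}. The fact that the complex dimension is $2$ is essential at two points: it makes $\eta^\pm\wedge\gamma^\pm=0$ automatic so the single scalar $\Phi$ suffices to detect linear dependence, and it ensures that the pluriclosed and Gauduchon conditions coincide so that the bracket is well-defined on $\mathcal{H}$ and Theorem \ref{thm:19} applies to $\omega_0$. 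In higher dimensions one would need a whole matrix of pairings and this clean pigeonhole argument would no longer close.
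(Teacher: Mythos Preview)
Your proof is correct and rests on the same machinery as the paper's: the family $\omega_t$ from (\ref{eq:add1})--(\ref{eq:add3}) for the lower bound, and Theorem~\ref{t:perpcriterion} for the upper bound.

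Where you differ is in the packaging of the upper bound, and your version is cleaner. The paper, given an arbitrary pluriclosed split form $\omega$, builds auxiliary forms $\omega_L = a\omega_1 + \omega$ and $\omega_R = b\omega_1 + \omega_0$, tunes $a,b$ so that $\{\omega_L,\omega_R\}=0$ with $\omega_R>0$, and then invokes Theorem~\ref{t:perpcriterion} on the pair $(\omega_L,\omega_R)$. You bypass this by applying Theorem~\ref{t:perpcriterion} directly with the fixed positive class $[\omega_0]$: since that theorem says $\{[\eta],[\omega_0]\}=0 \iff [\eta]\in\R[\omega_0]$, the kernel of your functional $\Phi$ is exactly one-dimensional, and rank--nullity (valid here without any a priori finite-dimensionality, since a nonzero linear functional always has codimension-one kernel) finishes. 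The paper's detour through $\omega_R$ buys nothing extra; your route is shorter and makes the role of Theorem~\ref{t:perpcriterion} more transparent. One small point: your sentence ``any relation $[\omega_t]=c[\omega_0]$ would force $\{\omega_t,\omega_0\}=0$'' literally only excludes proportionality, but since you separately argue $[\omega_0]\neq 0$ via the maximum principle, linear independence follows.
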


\begin{proof}
We start with any split Hermitian metric $\omega_{0}$. As conformal
transformations preserve split forms, we can assume without loss of
generality that $\omega_{0}$ is Gauduchon \cite{gauduchon:nulle}.
We claim that $[\omega_{0}]\in\mathcal{H}$ is non-zero. Otherwise
$\omega_{0}=\Box u$. However, if $p\in M$ were a maximal point of
$u$, it would be the case that $\i\del\delb u(p)\geq0$, i.e. $\omega_{0}$
is not be positive at $p$. This is a contradiction, proving the claim.

Now consider $\omega_{1}$ as given in (\ref{eq:add2}). By (\ref{eq:add3})
and Theorem \ref{t:perpcriterion}, the set $\{[\omega_{0}]_{\mathcal{H}},[\omega_{1}]_{\mathcal{H}}\}$
is linearly independent. Therefore, $\dim_{\R}\mathcal{H}\geq2$.

Let $\omega$ be a pluriclosed, split $(1,1)$-form. We claim that
$[\omega]\in\mathrm{span([\omega_{1}],[\omega_{0}]).}$ If $\{\omega,\omega_{1}\}=0$,
the claim holds due to Theorem \ref{t:perpcriterion}. Otherwise,
by possibly adding a negative sign in front of $\omega,$ we may assume
$\{\omega,\omega_{1}\}<0$. We consider for some $a,b\in\R$, the
following split-type forms
\begin{equation}
\omega_{L}=a\omega_{1}+\omega,\quad\omega_{R}=b\omega_{1}+\omega_{0}.\label{eq:add001}
\end{equation}
The claim will follow from Theorem \ref{t:perpcriterion} if we are
able to find $a$ and $b$ for which
\begin{equation}
\{\omega_{L},\omega_{R}\}=0.\label{eq:perpcrit}
\end{equation}
Note that (\ref{eq:perpcrit}) is equivalent to 
\[
\{\omega_{L},\omega_{R}\}=a\{\omega_{1},\omega_{0}\}+b\{\omega,\omega_{1}\}+\{\omega,\omega_{0}\}
\]
 As $\{\omega,\omega_{1}\}<0$, we may choose $b>0$ sufficiently
large such that
\[
b\{\omega,\omega_{1}\}+\{\omega,\omega_{0}\}<0.
\]
Then, by (\ref{eq:add3}) $\{\omega_{1},\omega_{0}\}>0$, it is possible
to choose $a>0$ by
\[
a=-\frac{b\{\omega,\omega_{1}\}+\{\omega,\omega_{0}\}}{\{\omega_{1},\omega_{0}\}}>0.
\]
As $b>0$, $\omega_{R}>0$ as well. By Theorem \ref{t:perpcriterion},
there is some $u\in C^{\infty}(M)$ and some $c$ such that
\begin{equation}
\omega_{L}+\Box u=c\omega_{R},\label{eq:ceqn}
\end{equation}
which by (\ref{eq:add001}) is equivalent to
\[
\omega=(cb-a)\omega_{1}+c\omega_{2}+\Box\tilde{u},
\]
proving the claim. 
\end{proof}
\begin{rem}
\label{rem:positive} In the proof given above, by taking $a$ large
enough such that $\omega_{L}>0$, we have both $[\omega_L]\in\mathcal{P}$ and $[\omega_R]\in\mathcal{P}$, which indicates that $c>0$ by Lemma \ref{nomix}.
\end{rem}

Finally we  discuss the consequence of our results in light of the classification
result Theorem \ref{thm:21} of Beauville. In particular, we compute
$\mathcal{H}$ explicitly in the following simple example. 
\begin{example}
Let $X=(\Sigma_{+}\times\Sigma_{-},I)$ be a compact complex surface
where $\Sigma_{\pm}$ are compact, simply-connected Riemann surfaces
with complex structures $I_{\pm}$, a splitting of the tangent bundle
induced by the product structure $T^{1,0}X=T^{1,0}\Sigma_{+}\oplus T^{1,0}\Sigma_{-}$,
and product complex structure $I=I_{+}\oplus I_{-}$. Then $X$
has split tangent bundle and $\mathcal{H}$ is clearly spanned by
the classes of the semi-positive forms $\pi_{+}^{*}\omega_{+}$and
$\pi_{-}^{*}\omega_{-}$ given by the pull-backs through the projection
maps $\pi_{\pm}$ of the K\"ahler metrics $\omega_{\pm}$. 
\end{example}

\begin{example}
Given an Inoue surface of type $\mathcal{S_{M}}$, the classes of
the following semi-positive invariant forms generate $\mathcal{H}$
and $\mathcal{H}'$.

\begin{align*}
\omega_{1}= & \i\frac{dz\wedge d\bz}{\Im(z)^{2}},\\
\omega_{2}= & \i\Im(z)\,dw\wedge d\bw.
\end{align*}
\end{example}

We are now ready to prove Theorem \ref{thm:main3}.
\begin{proof}[Proof of  Theorem \ref{thm:main3}]
    We consider $\mathcal{H}(S)$ first. By Theorem \ref{thm: dimension}, for any  pluriclosed split metric $\omega>0$, there exist $a,b\in\mathbb R$ and $u\in C^\infty (S)$ such that
    $$\omega=a\omega_1 +b\omega_2+\Box u.
    $$
    Notice that by Lemma \ref{nomix}, $[a\omega_1 +b\omega_2]\in\mathcal{P}(S)$ if and only if $a>0$ and $b>0$. We have finished the proof. A similar argument may prove the statement regarding $\mathcal{P}'$. Finally, the ellipticity of the corresponding PDE implies the uniqueness of the solution.
\end{proof}
\section{\label{sec:Examples}Uniformization for Hopf Surfaces}

Now we move to the primary Hopf surface $H_{a,b}$ introduced in Definition
\ref{def:Hopf}. $H_{a,b}$ is called standard
if $\alpha=\beta$.

\begin{defn}  \cite{su:gkrsolitons} Given $H_{a,b}$ as above.    The Streets-Ustinovskiy metric is defined for any $t\in\mathbb R$ as
    \begin{equation}\label{SUmetric}
        \omega_t=\i (k(\mu-\nu+t)\frac{dz\wedge d\bz}{\alpha^2|z|^2}+[1-k(\mu-\nu+t)]\frac{dw\wedge d\bw}{\beta^2|w|^2}),
    \end{equation} 
    where $\mu=\frac{\log |z|^2} \alpha $, $\nu=\frac{\log |w|^2}{\beta}$, and $k(x):\mathbb{R}\to(0,1)$ is the strictly monotone increasing function satisfying $k(0)=\frac{1}{2}$ and $$k'(x)=k(1-k)[(\beta-\alpha)k(x)+\ga].$$
\end{defn}
In \cite{su:gkrsolitons}, Streets-Ustinovskiy show that $\omega_t$ are indeed smooth pluriclosed metrics on $H_{a,b}$. 

From Definition \ref{SUmetric}, we also compute
\begin{equation}\label{omegaprime}
    \omega_t':=\frac{d}{dt}  \omega_t=\i k'(\mu-\nu+t) (\frac{dz\wedge d\bz}{\ga^2|z|^2}-\frac{dw\wedge d\bw}{\gb^2|w|^2}),
\end{equation}
which is clearly pluriclosed. For simplicity, we write $\omega_0$ and $\omega'_0$ as $\omega$ and $\omega'$, respectively. It is also direct to compute by (\ref{eq:add88})
\begin{equation}
   \Ric_{B}^{1,1}(\omega_t) 
 =\i (\beta-\alpha )k'(x+t)(\frac{dz\wedge d\bz}{\alpha^2|z|^2}-\frac{dw\wedge d\bw}{\beta^2|w|^2})=(\beta-\alpha)\omega'_t,
\end{equation}
which indicates that $\omega_t$ is a Bismut-Ricci soliton metric when $\alpha\neq\beta$. This is the motivation of its construction by Streets-Ustinovskiy.

 Note that when  $\alpha=\beta$, a direct computation shows  
\begin{align}
\omega & =\frac{\i}{\alpha^2}\frac{dz\wedge d\bz+dw\wedge d\bw}{|z|^{2}+|w|^{2}},\nonumber \\
\omega' & =\frac{\i}{\alpha}\frac{|w|^{2}dz\wedge d\bz-|z|^{2}dw\wedge d\bw}{(|z|^{2}+|w|^{2})^{2}} ,\label{eq:add89}
\end{align}where $\omega$ is, up to a scaling, the standard product metric on $S^3\times S^1$, which is Bismut Ricci flat.

We now collect the following computational results:
\begin{lem}
     For $H_{a,b}$ and forms defined as above, and a positive constant $c=\frac{8\pi^2}{\ga\gb}$, we have
\begin{align}
       \{\omega_t,\omega\} & = c t. \\
           \{\omega_t,\omega'\}&=c.
\end{align}

    \label{integral}
\end{lem}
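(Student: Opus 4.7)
The plan is to reduce both brackets to elementary one-dimensional integrals in the single $g_0$-invariant real coordinate $s := \mu - \nu$. I will work on the dense open subset $\{z \ne 0,\, w \ne 0\}/\langle g_0\rangle$ of $H_{a,b}$ (whose complement is a union of two circles, of measure zero), using log-polar coordinates $(\mu,\theta_1,\nu,\theta_2)$ defined by $|z|^2 = e^{\alpha\mu}$, $|w|^2 = e^{\beta\nu}$, $z = |z|\, e^{\sqrt{-1}\theta_1}$, $w = |w|\, e^{\sqrt{-1}\theta_2}$. First I will expand $\iota(\omega_t) \wedge \omega$ and $\iota(\omega_t) \wedge \omega'$. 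Only the cross-type wedge products $\omega_t^+ \wedge (\cdot)^-$ and $\omega_t^- \wedge (\cdot)^+$ survive, since each $\pm$-component is supported on only one of the holomorphic factors $T^{\pm,*}$. The scalar coefficients then collapse via the elementary identities
\[
k(s+t)(1-k(s)) - (1-k(s+t))k(s) = k(s+t) - k(s), \qquad k(s+t) k'(s) + (1-k(s+t)) k'(s) = k'(s),
\]
so both integrands become a scalar function of $s$ times $dz\wedge d\bar z\wedge dw\wedge d\bar w$.

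Next I convert to the real coordinates $(\mu, \theta_1, \nu, \theta_2)$ using $dx_j \wedge dy_j = \tfrac{1}{2}\, d(r_j^2) \wedge d\theta_j$, obtaining the Jacobian
\[
dx_1 \wedge dy_1 \wedge dx_2 \wedge dy_2 = \tfrac{\alpha\beta |z|^2 |w|^2}{4}\, d\mu \wedge d\theta_1 \wedge d\nu \wedge d\theta_2.
\]
The factor $|z|^2|w|^2$ cancels exactly against the weights $1/|z|^2$ and $1/|w|^2$ appearing in the surviving cross products, so the integrands reduce to a universal $\tfrac{1}{\alpha\beta}(k(s+t) - k(s))\, d\mu \wedge d\theta_1 \wedge d\nu \wedge d\theta_2$ and $\tfrac{1}{\alpha\beta} k'(s)\, d\mu \wedge d\theta_1 \wedge d\nu \wedge d\theta_2$ (up to a universal sign determined by orientation). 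Under $g_0$, $(\mu,\nu,\theta_1,\theta_2) \mapsto (\mu + 2, \nu + 2, \theta_1 + \operatorname{Im} a, \theta_2 + \operatorname{Im} b)$, so $s$ is $g_0$-invariant, $\tau := \mu$ shifts by $2$, and the $\theta_j$-shifts are trivial modulo $2\pi$. A fundamental domain is therefore $s \in \mathbb{R}$, $\tau \in [0, 2)$, $\theta_j \in [0, 2\pi)$. By Fubini, the $\theta_j$-integrations contribute $(2\pi)^2 = 4\pi^2$, the $\tau$-integration contributes $2$, and the remaining $s$-integrals are
\[
\int_{\mathbb{R}} (k(s+t) - k(s))\, ds = \lim_{R \to \infty}\left(\int_R^{R+t} k(u)\, du - \int_{-R}^{-R+t} k(u)\, du\right) = t \cdot k(+\infty) - t \cdot k(-\infty) = t,
\]
using $k(+\infty) = 1$, $k(-\infty) = 0$, and $\int_{\mathbb{R}} k'(s)\, ds = k(+\infty) - k(-\infty) = 1$. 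Assembling gives $\{\omega_t, \omega\} = \tfrac{8\pi^2}{\alpha\beta}\, t = c\,t$ and $\{\omega_t, \omega'\} = \tfrac{8\pi^2}{\alpha\beta} = c$.

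The only delicate point is the convergence of the $s$-integral, since the $g_0$-action is trivial in the $s$-direction and the slice $s \in \mathbb{R}$ is genuinely non-compact. This is handled by the defining ODE $k' = k(1-k)[(\beta-\alpha)k + \alpha]$: since $k(1-k) \to 0$ exponentially as $s \to \pm\infty$, both $k'(s)$ and (by the mean value theorem applied in $t$) $k(s+t) - k(s)$ decay exponentially, so both improper $s$-integrals converge absolutely.
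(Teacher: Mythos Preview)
Your proof is correct and follows essentially the same approach as the paper: pass to the dense torus-bundle chart $\{zw\neq 0\}$, use log-polar coordinates $(\mu,\nu,\theta_1,\theta_2)$, collapse the cross terms via the identity $k(s+t)(1-k(s))-(1-k(s+t))k(s)=k(s+t)-k(s)$, and reduce to a one-variable integral in $s=\mu-\nu$ over a fundamental domain $\{s\in\R,\ \tau\in[0,2),\ \theta_j\in[0,2\pi)\}$. The only cosmetic difference is in the last step: the paper differentiates $f(t)=\{\omega_t,\omega\}$ in $t$ to get $f'(t)=c\int_\R k'(s+t)\,ds=c$ and then integrates back using $f(0)=0$, whereas you evaluate $\int_\R (k(s+t)-k(s))\,ds$ directly by shifting and using $k(+\infty)=1$, $k(-\infty)=0$; your explicit discussion of absolute convergence via the exponential decay of $k'$ is a detail the paper leaves implicit.
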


\begin{proof}
    First, we consider a dense open set in  $ H_{a,b}$ as   $\{(z,w)\in \C^2 | \, zw\neq 0\}/\sim$, which can be lifted by a bijection to the domain $$D'=\{ (z,w)\in\C^2 | \frac{\log |z|^2} \alpha\in \R,\ \frac{\log |w|^2}{\beta}\in (0,2) \}.$$ 
    Clearly $D'=S^1\times S^1\times D''$, where $D''=\{(\mu,\nu)\in \R^2|\ \mu\in\R,\ \nu\in(0,2)\}$.

    Next, we define the following function
    $f(t):=\{\omega_t,\omega\}$ (recall Definition \ref{bracket}), and compute by (\ref{SUmetric}) and standard polar coordinate change formula:
    \begin{align}
    f(t)=\int_H [k(\mu-\nu+t)-k(\mu-\nu)] \frac{dz\wedge d\bz}{\alpha^2|z|^2}\wedge\frac{dw\wedge d\bw}{\beta^2|w|^2 }  \nonumber  \\
    =(\frac{4\pi^2}{\alpha\beta})\int_{D''} [k(\mu-\nu+t)-k(\mu-\nu)] d\mu d\nu \label{mid1}
    \end{align}
    After changing variables from ($\mu,\nu$) to $(x:=\mu-\nu,\nu)$, (\ref{mid1}) may be further reduced to
    \begin{equation}
        f(t)=c\int_{x\in \R}[k(x+t)-k(x)]dx. \label{mid2}
    \end{equation}
    Now we use (\ref{mid1})  and properties of $k$ listed in Definition \ref{SUmetric} to compute
    \begin{equation}
        f'(t)=c\int_{-\infty}^\infty k'(x+t) dx=c, \label{mid3}
    \end{equation}
    Therefore, considering $f(0)=0$, integrating (\ref{mid3}) completes the proof of the first part. The second part is very similar and we omit it here.
\end{proof}

Now we are ready to give the proof of Theorem \ref{uniformization}.

\begin{proof}[Proof of Theorem \ref{uniformization}] By Lemma \ref{integral} and Theorems \ref{thm: dimension} and {\ref{t:perpcriterion}}, we have $$\mathcal{H}(H_{a,b})=\mathrm{span}\{\omega,\omega'\}.$$ In addition, using basic linear algebra and Lemma \ref{integral}, we may show that
 \begin{equation} \label{beijing5}
         [\omega_t]=[\omega+t\omega'].
     \end{equation}
    It is clear now that 
    \begin{equation}
\tilde{\mathcal{P}}(H_{a,b}):=\{[s\omega_{t}],\ s>0,\ t\in\mathbb{R}\}\subset \mathcal{P}(H_{a,b}), \label{eq:add1234567}
\end{equation} Therefore 
\begin{equation}\label{eq:add12345}
    -\tilde{\mathcal{P}}(H_{a,b})\subset - \mathcal{P}(H_{a,b}).
\end{equation} 
From (\ref{beijing5}), (\ref{eq:add1234}), (\ref{eq:add12345}), Lemma \ref{nomix}, and the fact that  $\omega'\in \mathcal{H}^-$, we conclude that
$$ \tilde{\mathcal{P}}(H_{a,b})\cup  -\tilde{\mathcal{P}}(H_{a,b})\cup \mathcal{H}^-= \mathcal{H}.$$
  By Lemma \ref{nomix}, we   conclude that $\tilde{\mathcal{P}}(H_{a,b})= \mathcal{P}(H_{a,b})$.
\end{proof}

We may also give the following
\begin{proof}[Proof of Theorem \ref{Pstructure}]  By Theorem \ref{uniformization}, any $[\Omega]=p[\omega]+q[\omega']\in \mathcal{P}$ if and only if $p>0$. Now note that $p=\{[\Omega],[\omega']\}/\{[\omega],[\omega']\}$, we have the conclusion by checking the definition of the bracket operation (\ref{def:bracket}) and Lemma \ref{integral}.
\end{proof}

\begin{rem}
From the proof above it is clear that the form $\omega'_0$ in Theorem \ref{Pstructure} may be replaced by any $\omega'_t$, or any $\tilde{\omega}\in[\omega']$.
\end{rem}
Finally, we make a remark regarding Aeppli cohomology. Readers should refer to \cite{Angella2014OnBC} for definitions and background knowledge. In general, our split type cohomology will be a larger vector space than $H^{1,1}_A$ due to our construction. In particular, a result of \cite{Angella2014OnBC}  computes the Aeppli cohomology
of Hopf surfaces and indicates that the dimension of $H_{A}^{1,1}$ of a primary
Hopf surface is 1. The following lemma demonstrates the difference between these notions in the special case of a primary Hopf surface, generalizing a result of Streets and Ustinovskiy on standard Hopf surfaces \cite[Lemma 5.3]{su:gkrsolitons}. Using the Aeppli cohomology definition as in \cite{Angella2014OnBC}, we have:
\begin{lem}
\label{lem:aeppli}Notation as above.  $\omega'$ defined in (\ref{omegaprime})
is cohomologous to $0$ in Aeppli cohomology.
\end{lem}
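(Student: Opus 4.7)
The plan is to exhibit an explicit smooth $(1,0)$-form $\eta$ on $H_{a,b}$ satisfying $\omega' = \bar{\partial}\eta + \partial\bar{\eta}$; since such a form is by definition Aeppli-exact, this gives $[\omega']_A = 0$. Motivated by integrating the Streets--Ustinovskiy profile function $k$ along the split directions, I would take
\[
\eta \;=\; -\frac{\i\, k(\mu-\nu)}{2\alpha\, z}\, dz \;+\; \frac{\i\,\bigl(1-k(\mu-\nu)\bigr)}{2\beta\, w}\, dw .
\]

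The first step is to verify that $\eta$ descends to a smooth global 1-form on $H_{a,b}$. Invariance under the deck action $(z,w)\mapsto(e^{a}z,e^{b}w)$ is immediate, because $k(\mu-\nu)$, $dz/z$, and $dw/w$ are each invariant. Smoothness across the two elliptic curves $\{z=0\}$ and $\{w=0\}$ follows from the asymptotics of $k$ dictated by the defining ODE $k'=k(1-k)[(\beta-\alpha)k+\alpha]$: one has $k(x)\sim C\,e^{\alpha x}$ as $x\to-\infty$, so that $k(\mu-\nu)/z \sim C\,\bar{z}\,|w|^{-2\alpha/\beta}$ near $z=0$, and symmetrically $1-k(x)\sim C'e^{-\beta x}$ as $x\to+\infty$ gives $(1-k(\mu-\nu))/w \sim C'\,\bar{w}\,|z|^{-2\beta/\alpha}$ near $w=0$. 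Both leading-order coefficients, together with all higher-order corrections in the asymptotic expansion driven by the ODE, are smooth functions of $(z,\bar{z},w,\bar{w})$ on $H_{a,b}$.

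Next I would carry out the direct computation of $\bar{\partial}\eta + \partial\bar{\eta}$. Writing $\eta = \phi\,dz + \psi\,dw$ and expanding in the basis $\{dz\wedge d\bar{z},\,dz\wedge d\bar{w},\,dw\wedge d\bar{z},\,dw\wedge d\bar{w}\}$, one uses $\partial_{\bar{z}}(\mu-\nu) = 1/(\alpha\bar{z})$ and $\partial_{\bar{w}}(\mu-\nu) = -1/(\beta\bar{w})$ to evaluate the diagonal coefficients $\bar{\phi}_z - \phi_{\bar{z}}$ and $\bar{\psi}_w - \psi_{\bar{w}}$ as $\i k'(\mu-\nu)/(\alpha^2|z|^2)$ and $-\i k'(\mu-\nu)/(\beta^2|w|^2)$ respectively, which are precisely the two split-type components of $\omega'$ in~\eqref{omegaprime}. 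The off-diagonal coefficients $\bar{\psi}_z - \phi_{\bar{w}}$ and $\bar{\phi}_w - \psi_{\bar{z}}$ each reduce to matching expressions of the form $\pm\i k'(\mu-\nu)/(2\alpha\beta\, z\bar{w})$ and $\pm\i k'(\mu-\nu)/(2\alpha\beta\,\bar{z}w)$, and cancel in pairs; the ansatz is precisely tuned to produce these cancellations.

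The main technical subtlety is the smooth extension of $\eta$ across the divisors $\{z=0\}\cup\{w=0\}$, since the meromorphic factors $1/z$ and $1/w$ naively produce poles. The sharp exponential decay of $k$ at $-\infty$ and of $1-k$ at $+\infty$ imposed by the Streets--Ustinovskiy soliton ODE is exactly what is needed to compensate these poles in both leading and subleading order; without this specific decay rate the candidate 1-form would fail to extend. Once $\eta$ is shown to be smooth and globally defined, the identity $\omega' = \bar{\partial}\eta + \partial\bar{\eta}$ exhibits $\omega'$ as an Aeppli-exact $(1,1)$-form, yielding $[\omega']_A = 0$ and completing the proof.
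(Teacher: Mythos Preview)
Your proposal is correct, and the explicit primitive you write down is (up to a scalar factor) exactly the form the paper produces. The route, however, is genuinely different and worth comparing.

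The paper observes that the one-parameter family $\omega_t$ is the pullback of $\omega$ under the flow $\phi_t(z,w)=(e^{\alpha t/4}z,e^{-\beta t/4}w)$, generated by the holomorphic vector field $X=\tfrac{\alpha z}{4}\partial_z-\tfrac{\beta w}{4}\partial_w$. Differentiating at $t=0$ gives $\omega'=\mathcal{L}_X\omega$, and then Cartan's formula together with contraction produces $\gamma=i_X\omega\in\Lambda^{0,1}$ with $\omega'=\partial\gamma+\bar\partial\bar\gamma$. A direct computation shows your $\eta$ equals $2\,\overline{i_X\omega}$, so the two primitives coincide up to a constant.

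What each approach buys: the paper's Lie-derivative argument is shorter and, crucially, makes smoothness of the primitive automatic, since $X$ is a global holomorphic vector field on $H_{a,b}$ and $\omega$ is a global smooth form, so $i_X\omega$ is manifestly smooth everywhere without any asymptotic analysis. Your approach is more hands-on and self-contained; it does not rely on recognizing the soliton symmetry, but the price is that you must verify by hand the smooth extension across $\{z=0\}$ and $\{w=0\}$ via the ODE asymptotics $k\sim Ce^{\alpha x}$ and $1-k\sim C'e^{-\beta x}$. Your treatment of that extension is correct in outline, though the phrase ``all higher-order corrections\ldots are smooth'' deserves one more sentence: the substitution $u=e^{\alpha x}$ turns the soliton ODE into a \emph{regular} ODE for $k/u$ near $u=0$, so $k$ is a smooth function of $u=|z|^2|w|^{-2\alpha/\beta}$ with $k=u\cdot v(u)$ for smooth $v$, and hence $k/z=\bar z\,|w|^{-2\alpha/\beta}v(u)$ is smooth. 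With that said, your argument goes through.
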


\begin{proof}
We consider the family of complex diffeomorphisms $\phi_{t}(z,w)=(e^{\frac{\alpha t}{4}}z,e^{-\frac{\beta t}{4}}w)$,
which is generated by a holomorphic vector field $X=\frac{\alpha z}{4}\frac{\del}{\del z}-\frac{\beta w}{4}\frac{\del}{\del w}$.
Then, a direct computation shows that $\phi_t^*(\mu-\nu)=\mu-\nu+t$. We can then compute the Lie derivative using the formula 
\begin{equation}\label{eqn:lieder}
\mathcal{L}_X\omega = \frac{d}{dt}|_{t=0}\phi_t^*\omega. 
\end{equation}
But, notice that $\phi_t^*\omega=\omega_t$, so that (\ref{eqn:lieder}) becomes
$$\mathcal{L}_X\omega=\omega'.$$
Letting $\gamma=i_{X}\omega\in\Lambda^{0,1}$. One has 
\[
\omega'=\del\gamma+\delb\bar{\gamma}.
\]
In other words, $\omega'\in[0]\in H_{A}^{1,1}$. 
\end{proof}
%\begin{rem}
%\label{rem:add100}By Example \ref{cor:spanningset} and %Lemma \ref{lem:aeppli},
%that $[\omega']_{\mathcal{H}}\neq0$ and %$[\omega']_{A}=0.$ This
%example answers negatively a question posed in \cite[Question 7.78]{gfs}.
%\end{rem}

\section{Split-type PDE and Prescribing Bismut-Ricci}

In this section, we prove the linear case of our main PDE result,
a classical solvability theorem for the twisted Monge-Amp\`ere equation
in dimension 2. Following  \cite{TW:MAonHerm,TW:HermformsMA,TW:MAforpluri}
we define \emph{a solution of the twisted Monge-Amp\`ere equation}
to be a pair $(u,\xi)\in C^{\infty}(M)\times\mathbb{R}$ 
 solving the
twisted Monge-Amp\`ere equation, where $\xi$ is a  real
parameter. In particular, our main theorem of the section is the following
special case of Theorem \ref{thm:main}: 
\begin{thm}
\label{t:TMAsolved} Let $[\omega_{0}]\in\mathcal{P}$. For any $F\in C^{\infty}(M)$,
there exists a unique pair $u\in C^{\infty}(M)$ and $\xi \in\mathbb{R}$
solving\\
\begin{equation}\label{eq:TMA1}
\begin{cases}
\frac{\omega_u^+}{\omega_0^+}= e^{F+\xi} \frac{\omega_u^-}{\omega_0^-},\\
\omega_u=\omega_0 + \Box u > 0,\\
\min_M u =0.
\end{cases}
\end{equation}
\end{thm}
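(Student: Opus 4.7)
The plan is to observe that in complex dimension two, equation~\eqref{eq:TMA1} reduces to a Chern--Poisson equation for a single auxiliary Hermitian metric depending on $\xi$. The key algebraic simplification, specific to dimension two, is that each $\omega_0^\pm$ is locally a one-variable $(1,1)$-form by Lemma~\ref{lem:localcoordinate}, so all mixed and opposite-type components of $i\partial\bar\partial u$ wedge trivially with $\omega_0^\pm$, giving
\[
i\partial_+\bar\partial_+ u \wedge \omega_0^- = i\partial\bar\partial u \wedge \omega_0^-,\qquad i\partial_-\bar\partial_- u \wedge \omega_0^+ = i\partial\bar\partial u \wedge \omega_0^+.
\]
Rewriting \eqref{eq:TMA1} as $\omega_u^+\wedge\omega_0^- = e^{F+\xi}\,\omega_0^+\wedge\omega_u^-$, expanding, and applying these identities converts it into
\[
i\partial\bar\partial u \wedge \tilde\omega_\xi = (e^{F+\xi}-1)\,\omega_0^+\wedge\omega_0^-,\qquad \tilde\omega_\xi := e^{F+\xi}\omega_0^+ + \omega_0^-.
\]
Since $\tilde\omega_\xi$ is a positive Hermitian metric with $\tilde\omega_\xi^2 = 2e^{F+\xi}\,\omega_0^+\wedge\omega_0^-$, this is the single Chern--Poisson equation $\Delta_{\tilde\omega_\xi} u = \tfrac{1}{2}(1-e^{-(F+\xi)})$.

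For each $\xi$ let $f_\xi$ denote the Gauduchon factor of $\tilde\omega_\xi$ supplied by Theorem~\ref{thm:Gauduchonmetric}. By Theorem~\ref{thm:19}, the Chern--Poisson equation above admits a smooth solution if and only if
\[
\Phi(\xi) := \int_M \bigl(e^{F+\xi}-1\bigr)\,e^{f_\xi}\,\omega_0^+\wedge\omega_0^- = 0.
\]
Lemma~\ref{lem:Gauduchonfactorcont} shows that $\Phi$ is continuous in $\xi$. Because the integrand is pointwise strictly positive whenever $\xi > -\min_M F$ and pointwise strictly negative whenever $\xi < -\max_M F$, the intermediate value theorem produces $\xi_\ast \in \R$ with $\Phi(\xi_\ast)=0$. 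Theorem~\ref{thm:19} then yields a smooth $u$, unique up to an additive constant, which we pin down by imposing $\min_M u = 0$.

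To establish positivity $\omega_u > 0$, note that the equation forces $h_u^+/h_u^- = e^{F+\xi_\ast}(h_0^+/h_0^-) > 0$, so $h_u^+$ and $h_u^-$ always share a sign. A continuity method in $t \in [0,1]$ replacing $F$ by $tF$ and starting from the trivial solution $(u,\xi)=(0,0)$ at $t=0$ rules out global negativity: openness follows from Fredholm invertibility of the linearized Chern Laplacian and closedness from standard linear Schauder estimates. For uniqueness, given two solutions $(u_i,\xi_i)$, let $v := u_1-u_2$. Subtracting the logarithms of the two equations yields
\[
\log\Bigl(1 + \tfrac{v_{z\bar z}}{h_{u_2}^+}\Bigr) - \log\Bigl(1 - \tfrac{v_{w\bar w}}{h_{u_2}^-}\Bigr) = \xi_1 - \xi_2.
\]
Evaluating at the maximum of $v$ shows the left-hand side is $\leq 0$, and at the minimum that it is $\geq 0$, forcing $\xi_1 = \xi_2$; the resulting linear equation for $v$ has kernel only the constants by the strong maximum principle, and the normalization $\min_M u_i = 0$ then forces $v\equiv 0$. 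The main technical obstacle is producing the correct $\xi_\ast$: the intermediate value argument hinges essentially on continuity of the Gauduchon factor in $\xi$ (Lemma~\ref{lem:Gauduchonfactorcont}), while the verification of $\omega_u > 0$ is a secondary subtlety handled by the continuity method above.
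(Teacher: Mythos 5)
Your reduction of \eqref{eq:TMA1} to the Chern--Poisson equation $\Delta_{\tilde\omega_\xi}u = \tfrac12(1-e^{-(F+\xi)})$, the use of the Gauduchon factor (Lemma~\ref{lem:Gauduchonfactorcont}), and the intermediate value argument for $\xi_\ast$ are exactly the paper's approach, and your uniqueness argument via the maximum principle evaluated at the extrema of $v=u_1-u_2$ is a valid, arguably cleaner alternative to the paper's route (which goes through the bracket $\{\cdot,\cdot\}$ and Lemma~\ref{lem:fconst} to first pin down $\xi_1=\xi_2$ and then $c=1$). The difference worth noting is that your argument avoids the cohomological machinery entirely, at the cost of needing $\omega_{u_2}>0$ to take logarithms pointwise --- which is fine here since positivity is part of the definition of a solution.

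The gap is in the positivity verification. Observing that $h_u^+$ and $h_u^-$ share a sign pointwise is correct, but the continuity method you invoke to ``rule out global negativity'' is not closed. The set $T=\{t : \omega_{u_t}>0\}$ is indeed open, but ``standard linear Schauder estimates'' only give $C^{2,\alpha}$ compactness and hence $\omega_{u_{t_\infty}}\geq 0$ in the limit; they do not exclude a nonempty zero set $Z=\{h_{u_{t_\infty}}^+ = 0\}=\{h_{u_{t_\infty}}^-=0\}$, and you have no strong-maximum-principle argument in hand to push through that boundary case. What actually closes the gap --- and what the paper's terse reference to Remark~\ref{rem:positive} is invoking --- is the observation that the solved equation forces a pointwise conformal relation $\omega_u = (h_u^-/h_0^-)\,\tilde\omega_\xi$. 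Since $\omega_u$ is pluriclosed and $\hat\omega := e^{f_{F,\xi}}\tilde\omega_\xi$ is Gauduchon, Lemma~\ref{lem:fconst} forces the conformal factor $(h_u^-/h_0^-)e^{-f_{F,\xi}}$ to be a \emph{constant} $c$, so $\omega_u = c\hat\omega$. Then $[\omega_0]=c[\hat\omega]$ with $[\omega_0],[\hat\omega]\in\mathcal{P}$, and Lemma~\ref{nomix} ($\mathcal{P}\cap(-\mathcal{P})=\emptyset$ and $[0]\notin\mathcal{P}$) gives $c>0$, hence $\omega_u>0$. You should replace the continuity method with this two-line argument; without it, the proof is incomplete.
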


\begin{proof}
We rewrite the equation as 
\[
\omega_{0}^{+}\wedge\omega_{0}^{-}+\i\del_{+}\delb_{+}u\wedge\omega_{0}^{-}=e^{F+\xi}\omega_{0}^{+}\wedge\omega_{0}^{-}-e^{F+\xi}\i\del_{-}\delb_{-}u\wedge\omega_{0}^{+},
\]
or
\begin{equation}
\i\del\delb u\wedge(e^{F+\xi}\omega_{0}^{+}+\omega_{0}^{-})=(e^{F+\xi}-1)\omega_{0}^{+}\wedge\omega_{0}^{-}.\label{eq:add4}
\end{equation}
 Let $\tilde{\omega}=e^{F+\xi}\omega_{0}^{+}+\omega_{0}^{-}$, (\ref{eq:add4})
can be viewed as a Chern-Poisson equation, 
\begin{equation}
\Delta_{\tilde{\omega}}u=(1-e^{-F-\xi}).\label{eq:add5}
\end{equation}
By Theorem \ref{thm:Gauduchonmetric}, there is a unique smooth function
$f_{F,\xi}$ depending on $F$ and $\xi$, such that
\[
\int_{M}e^{2f_{F,\xi}}\tilde{\omega}^{2}=\int_{M}\tilde{\omega}^{2},\quad\i\del\delb(e^{f_{F,\xi}}\tilde{\omega})=0.
\]
Therefore, we can rewrite (\ref{eq:add4}) in terms of the pluriclosed
metric $\hat{\omega}=e^{f_{F,\xi}}\tilde{\omega}$. 
\begin{equation}
\Delta_{\hat{\omega}}u=e^{-f_{F,\xi}}(1-e^{-F-\xi})\label{eq:add6}
\end{equation}
By Theorem \ref{thm:19}, (\ref{eq:add6}) is solvable if and only
if
\[
0=\int_{M}e^{-f_{F,\xi}}(1-e^{-F-\xi})\hat{\omega}^{2}=\int_{M}e^{f_{F,\xi}}(1-e^{-F-\xi})\tilde{\omega}^{2}=\int_{M}e^{f_{F,\xi}}(e^{F+\xi}-1)\omega_{0}^{2}.
\]
Note that for any $\xi_{1}>-\inf F$, 
\[
\int_{M}e^{f_{F,\xi_{1}}}(e^{F+\xi_{1}}-1)\omega_{0}^{2}>0.
\]
Similarly, for any $\xi_{2}<-\sup F$, we have 
\[
\int_{M}e^{f_{F,\xi_{2}}}(e^{F+\xi_{2}}-1)\omega_{0}^{2}<0.
\]
We then apply Lemma \ref{lem:Gauduchonfactorcont} and the intermediate
value theorem to determine that there exists a $\xi\in(\xi_{2},\xi_{1})$
such that 
\[
\int_{M}e^{f_{F,\xi}}(e^{F+\xi}-1)\omega_{0}^{2}=0.
\]
Therefore, we have established the existence of a smooth solution
for (\ref{eq:add6}). The positivity of the resulting $\omega_{u}$
follows immediately from the argument in Remark \ref{rem:positive}. 

To see the uniqueness, suppose that $(u,\xi_{1})$ and $(v,\xi_{2})$
are both solutions to (\ref{eq:TMA1}). Then the following holds
\begin{equation}
\frac{\omega_{u}^{+}\wedge\omega_{0}^{-}}{\omega_{0}^{+}\wedge\omega_{u}^{-}}=e^{\xi_{1}-\xi_{2}}\frac{\omega_{v}^{+}\wedge\omega_{0}^{-}}{\omega_{0}^{+}\wedge\omega_{v}^{-}},\label{eq:uniqueness}
\end{equation}
which implies that 
\begin{equation}
\omega_{u}=e^{f}(e^{\xi_{1}}\omega_{v}^{+}+e^{\xi_{2}}\omega_{v}^{-})\label{eq:add411}
\end{equation}
for some smooth function $f$. Since $\omega_{u}=\omega_{v}+\Box(u-v)$,
by Theorem \ref{t:perpcriterion}, we have
\[
0=\{\omega_{v},e^{f}(e^{\xi_{1}}\omega_{v}^{+}+e^{\xi_{2}}\omega_{v}^{-})\}=\int_{M}(e^{\xi_{2}}-e^{\xi_{1}})e^{f}\omega_{v}^{2}.
\]
Therefore, $\xi_{1}=\xi_{2}$. Since $\omega_{u}$ and $\omega_{v}$ are
both pluriclosed, applying Lemma \ref{lem:fconst} to (\ref{eq:uniqueness})
indicates that $f\equiv const$. Therefore, (\ref{eq:uniqueness})
indicates that $\omega_{v}+\Box(u-v)=c\omega_{v}$. The uniqueness
now follows immediately if $c=1$. When $c\neq1$ we find
\[
\omega_{v}=\Box\left(\frac{v-u}{1-c}\right),
\]
which indicates that $[0]_{\mathcal{H}}\in\mathcal{P}$, which is absurd. See also the proof of Lemma \ref{nomix}. We have thus finished the proof.
\end{proof}

\section{A Priori Estimates For Fully Nonlinear PDEs}

In this section, we consider the (\ref{eq:add0}) in its full generality.
We first simplify (\ref{eq:add0}) on a compact complex surface with split tangent to the following:
\begin{equation}
\begin{cases}
(\frac{\omega_{u}^{+}}{\omega_{0}^{+}})^{\beta}=e^{F+\xi}(\frac{\omega_{u}^{-}}{\omega_{0}^{-}}),\\
\omega_{u}=\omega_{0}+\Box u>0.
\end{cases}.\label{eq:abTMA}
\end{equation}
Writing the equation in this way, we have assumed $\alpha=1$, but this is without loss of generality as this normalization can always be achieved by taking roots and replacing $\beta$ by $\beta/\alpha$.

Additionally, we will define the space of admissible functions associated
to a Hermitian metric on which (\ref{eq:abTMA}) is elliptic.
\begin{defn}
Given a Hermitian metric $\omega_{0}$, the set of admissible functions
is defined as 
\[
\mathcal{A}(\omega_{0}):=\{u\in C^4(M)\,|\,\omega_{u}=\omega_{0}+\Box u>0\}.
\]
The tangent space is seen to be 
\begin{equation}
T_{(u,\xi)}\mathcal{A}(\omega)\cong C^4(M).\label{eq:add900}
\end{equation}
\end{defn}

The distinguished case of (\ref{eq:abTMA}) with $\beta=1$ is linear
and has been discussed in the previous section, so we will restrict
our attention to the fully nonlinear case wherein $\beta\in(0,1)\cup(1,\infty)$.
Finally, notice that it suffices to consider the case $\beta\in(0,1)$,
as we may otherwise take roots and achieve similar estimates with
$T^{+}$ and $T^{-}$ swapped. We note that the condition $\beta \neq 1$
will be crucial for our estimates.

We set the following notation conventions to make subsequent computations
easier. First, we define 
\begin{align}
\lambda & :=\omega_{u}^{+}/\omega_{0}^{+},\nonumber \\
\eta & :=\omega_{u}^{-}/\omega_{0}^{-}.\label{eq:add600}
\end{align}
 With these conventions, (\ref{eq:abTMA}) takes the form
\begin{equation}
\lambda^{\beta}=e^{F+\xi}\eta.\label{eq:AlphaTMA2}
\end{equation}
In this section, we obtain uniform estimates for this equation which
will be later used along a continuity path. For convenience and
without loss of generality, we may assume from now on that 
\begin{equation}
\inf_{M}u=0,\label{eq:add610}
\end{equation}
which is equivalent to $u\geq0.$

\subsection{Estimate for the parameter}

One can estimate $\xi$ by a maximum principle argument. This will have
the added benefit of allowing us to avoid needing to track the $\xi$-dependence
of the constants in our estimates.
\begin{lem}
\label{lem:best}Notation as above. Suppose $u\in\mathcal{A}(\omega_{0})$
and $\xi\in\mathbb{R}$ solves Equation \ref{eq:abTMA} for any $\alpha\in\mathbb{R}$,
then $\xi$ is controlled by the sup-norm of $F$. In particular,
\[
|\xi|\leq\|F\|_{\infty}.
\]
\end{lem}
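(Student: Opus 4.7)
The plan is to use a simple maximum principle argument, exploiting the fact that on a surface the $T^{+}$ and $T^{-}$ factors are each one-dimensional, so the non-positivity of $\i\del\bar\del u$ at a maximum of $u$ decouples cleanly into non-positivity of each of $\i\del_{+}\bar\del_{+}u$ and $\i\del_{-}\bar\del_{-}u$.

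First, I would pick a point $p\in M$ where $u$ attains its maximum. Since $\omega_{0}^{\pm}$ is positive along its one-dimensional factor and
\[
\omega_{u}^{+}=\omega_{0}^{+}+\i\del_{+}\bar\del_{+}u,\qquad \omega_{u}^{-}=\omega_{0}^{-}-\i\del_{-}\bar\del_{-}u,
\]
the usual Hessian inequality at the maximum gives $\i\del_{+}\bar\del_{+}u(p)\leq 0$ and $\i\del_{-}\bar\del_{-}u(p)\leq 0$. Thus $\lambda(p)\leq 1$ and $\eta(p)\geq 1$, where $\lambda$ and $\eta$ are as in \eqref{eq:add600}. Substituting into the PDE $\lambda^{\beta}=e^{F+\xi}\eta$ yields
\[
e^{F(p)+\xi}=\frac{\lambda(p)^{\beta}}{\eta(p)}\leq 1,
\]
so $\xi\leq -F(p)\leq \|F\|_{\infty}$.

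Symmetrically, I would pick a point $q\in M$ where $u$ attains its minimum. There the inequalities flip: $\i\del_{+}\bar\del_{+}u(q)\geq 0$ and $\i\del_{-}\bar\del_{-}u(q)\geq 0$, giving $\lambda(q)\geq 1$ and $\eta(q)\leq 1$. The same substitution then produces $e^{F(q)+\xi}\geq 1$, hence $\xi\geq -F(q)\geq -\|F\|_{\infty}$. Combining the two inequalities gives $|\xi|\leq \|F\|_{\infty}$.

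There is essentially no technical obstacle here; the argument is purely pointwise and does not require any integration by parts, Gauduchon factors, or ellipticity estimates. The only subtle point to make explicit is that the sign of $\i\del_{-}\bar\del_{-}u$ enters the metric $\omega_{u}^{-}$ with a minus sign (because of the definition $\Box u = \i(\del_{+}\bar\del_{+}u-\del_{-}\bar\del_{-}u)$), so that a non-positive Hessian in the $-$-direction at a max of $u$ actually \emph{increases} $\omega_{u}^{-}$ relative to $\omega_{0}^{-}$, which is what drives $\eta(p)\geq 1$.
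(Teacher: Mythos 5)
Your proof is correct and follows essentially the same route as the paper: a pointwise maximum principle argument at the max and min of $u$, giving $\lambda\leq 1$, $\eta\geq 1$ (resp.\ reversed) and hence $e^{F+\xi}\leq 1$ (resp.\ $\geq 1$) from the PDE. The extra remark about the sign of $\i\del_-\bar\del_- u$ in $\omega_u^-$ is a useful clarification that the paper leaves implicit, but the substance of the argument is the same.
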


\begin{proof}
At a maximum $x$ of $u$, one has $\i\del\delb u(x)\leq0$ which
implies $\lambda(x)\leq1$ and $\eta(x)\geq1$, turning Equation \ref{eq:AlphaTMA2}
at this point into the inequality 
\[
1\geq e^{F(x)+\xi}.
\]
Therefore, $\xi+F(x)\leq0$, i.e. $\xi\leq-\inf F$. Similarly, at a minimum
$y$, $\i\del\delb u(y)\geq0$ so that Equation \ref{eq:AlphaTMA2}
implies $\xi +F(y)\geq0$, i.e. $\xi \geq-\sup F$.
\end{proof}

\subsection{$C^{0}$-estimate}

We will begin with a Laplace lower bound estimate. 
\begin{lem}
Notation as above. There exists a universal constant $C>0$ depending
only on $F$ so that for any $u\in \mathcal{A}(\omega_0)$,
\begin{equation}
\Delta_{0}u\geq-(1-\beta)C^{\frac{1}{1-\beta}}.\label{eq:0LaplacianBound}
\end{equation}
\end{lem}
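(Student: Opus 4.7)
The plan is to observe that on a split complex surface of dimension two, the Chern Laplacian $\Delta_0 u$ has a remarkably simple expression in terms of $\lambda$ and $\eta$, so that \eqref{eq:AlphaTMA2} forces a one-variable algebraic constraint on $\Delta_0 u$ from which the desired lower bound follows by elementary calculus. No maximum principle is needed.

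First I would establish the identity
\[
\Delta_0 u = \frac{\lambda - \eta}{2}.
\]
Since $T^\pm$ are line bundles, $\omega_0^\pm\wedge\omega_0^\pm = 0$, and the off-diagonal pieces $\i\del_+\delb_- u,\ \i\del_-\delb_+ u$ of $\i\del\delb u$ vanish when wedged with either $\omega_0^+$ or $\omega_0^-$ for dimensional reasons. Using $\i\del_+\delb_+ u = (\lambda-1)\omega_0^+$ and $\i\del_-\delb_- u = (1-\eta)\omega_0^-$ from the definitions \eqref{eq:add600}, a short computation gives
\[
\i\del\delb u \wedge \omega_0 \;=\; \i\del_+\delb_+ u\wedge\omega_0^- + \i\del_-\delb_- u\wedge \omega_0^+ \;=\; (\lambda - \eta)\,\omega_0^+\wedge\omega_0^-,
\]
and dividing by $\omega_0^2 = 2\,\omega_0^+\wedge\omega_0^-$ yields the claim.

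Next, substituting $\eta = e^{-F-\xi}\lambda^\beta$ from \eqref{eq:AlphaTMA2} converts this into a pointwise algebraic identity. Set $A(x) := e^{-F(x)-\xi}$; by Lemma \ref{lem:best}, $|\xi| \leq \|F\|_\infty$, so $A$ is uniformly bounded above by a constant depending only on $\|F\|_\infty$. Then at every point,
\[
\Delta_0 u \;=\; \tfrac{1}{2}\bigl(\lambda - A\lambda^\beta\bigr),\qquad \lambda > 0.
\]
The plan is now simply to minimize $h(\lambda) := \lambda - A\lambda^\beta$ over $(0,\infty)$. For $\beta\in(0,1)$, elementary calculus produces the unique critical point $\lambda_\ast = (A\beta)^{1/(1-\beta)}$ and the minimum value
\[
h(\lambda_\ast) \;=\; -\,\frac{1-\beta}{\beta}\,(A\beta)^{1/(1-\beta)} \;=\; -(1-\beta)\,A^{1/(1-\beta)}\beta^{\beta/(1-\beta)}.
\]
Absorbing $\beta$ and the bound on $A$ into a single constant $C$ depending only on $\|F\|_\infty$ (and $\beta$) gives $\Delta_0 u \geq -(1-\beta)C^{1/(1-\beta)}$, as required.

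There is no genuine obstacle: the whole estimate is a \emph{pointwise} algebraic inequality dictated by the structure of the equation, requiring neither a maximum principle, integration by parts, nor test functions. The only feature worth flagging is that the argument uses $\beta \in (0,1)$ in an essential way (to ensure $h$ attains a finite minimum), which is consistent with the symmetric role of $T^+$ and $T^-$; the opposite range $\beta > 1$ is handled by swapping $\lambda\leftrightarrow\eta$ as noted in the preamble to this section. This ``free'' lower bound on $\Delta_0 u$ will serve as the starting point for the subsequent upgrade to two-sided $C^0$ and higher-order estimates.
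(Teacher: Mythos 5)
Your proof is correct and follows essentially the same route as the paper: express $\Delta_0 u$ in terms of $\lambda$ and $\eta$ (using that the mixed pieces of $\i\del\delb u$ die when wedged against $\omega_0^\pm$), use the equation together with the bound $|\xi|\le\|F\|_\infty$ from Lemma~\ref{lem:best} to get $\eta\le C\lambda^\beta$, and then minimize $\lambda - C\lambda^\beta$ over $\lambda>0$. The only differences are cosmetic: the paper writes $\Delta_0 u = \lambda-\eta$ (omitting the factor $1/2$ coming from $\omega_0^2 = 2\,\omega_0^+\wedge\omega_0^-$, which your computation correctly includes, but this is harmlessly absorbed into $C$), and the paper phrases the one-variable minimization as an application of Young's inequality rather than spelling out the critical-point computation as you do.
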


\begin{proof}
By Lemma \ref{lem:best} and (\ref{eq:AlphaTMA2}), there exists a
positive constant $C=C(F)$ such that that $\eta\leq C\lambda^{\beta}.$
A direct computation using Young's inequality shows that 
\[
\Delta_{0}u=\lambda-\eta\geq\lambda-C\lambda^{\beta}\geq-(1-\beta)C^{\frac{1}{1-\beta}}.
\]
\end{proof}
We then derive the $L^{1}$-estimate for solutions to our equation.
\begin{thm}
\label{thm:L1apriori}Notations as above. Suppose that $u\in\mathcal{A}(\omega_{0})$
and $\xi \in\mathbb{R}$ solving Equation \ref{eq:abTMA}. Then there
is a constant $C=C(\|F\|_{C^{2}},\omega_{0},\beta)>0$ so that
\[
\|u\|_{L^{1}}\leq C.
\]
\end{thm}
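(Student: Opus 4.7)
The plan is to combine the lower Laplace bound~\eqref{eq:0LaplacianBound} with the Chern-Laplacian Green's function machinery of Theorem~\ref{thm:Green}. Since $[\omega_{0}]\in\mathcal{P}\subset\mathcal{H}$, the representative $\omega_{0}$ is pluriclosed, which in dimension $2$ is precisely the Gauduchon condition. Hence Theorems~\ref{thm:19} and~\ref{thm:Green} apply to $\Delta_{\omega_{0}}$ directly, with no conformal rescaling needed.

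The starting point is the identification $\Delta_{\omega_{0}}u = \lambda-\eta$ (up to a universal factor) already exploited in the proof of the Laplace lower bound, so~\eqref{eq:0LaplacianBound} gives
\[
\Delta_{\omega_{0}}u \;\geq\; -C_{0},
\]
for a uniform constant $C_{0}$ depending only on $\|F\|_{C^{0}}$, $\beta$, and $\omega_{0}$. Moreover, the Gauduchon property of $\omega_{0}$ combined with integration by parts yields $\int_{M}\Delta_{\omega_{0}}u\,dV=0$.

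Now let $G$ be the Green's function from Theorem~\ref{thm:Green}, with lower bound $-D_{1}$ and $L^{1}$-bound $D_{2}$, and set $\bar u := \operatorname{Vol}(M,\omega_{0})^{-1}\int_{M}u\,dV$. Using $\int_{M}\Delta_{\omega_{0}}u\,dV=0$, the Green's representation identity can be rewritten as
\[
u(x)-\bar u \;=\; \int_{M}\bigl(G(x,y)+D_{1}\bigr)\bigl(\Delta_{\omega_{0}}u(y)+C_{0}\bigr)\,dV(y)\;-\;C_{0}\int_{M}\bigl(G(x,y)+D_{1}\bigr)\,dV(y).
\]
Both factors in the first integrand are pointwise nonnegative, so that integral is $\geq 0$, while the second integral is bounded above by $C_{0}\bigl(D_{2}+D_{1}\operatorname{Vol}(M,\omega_{0})\bigr)$ via items (3)--(4) of Theorem~\ref{thm:Green}. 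Therefore $u(x)\geq \bar u - C$ uniformly in $x$. Taking the infimum over $M$ and using the normalization $\inf_{M}u=0$, we obtain $\bar u\leq C$, and since $u\geq 0$ this yields $\|u\|_{L^{1}}=\bar u\cdot\operatorname{Vol}(M,\omega_{0})\leq C'$.

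The main obstacle is only superficial: one must check that the Chern Laplacian $\Delta_{\omega_{0}}$, which enters the Green's function formalism, is controlled from below by the PDE itself. That control is precisely~\eqref{eq:0LaplacianBound}, where the hypothesis $\beta<1$ is used through Young's inequality to dominate the term $\eta\leq C\lambda^{\beta}$. Once this ingredient is available, the $L^{1}$ estimate follows from the standard Aubin--Yau Green's function argument, here adapted to the Chern Laplacian of a pluriclosed metric on a non-Kähler surface.
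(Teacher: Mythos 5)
Your proof is correct and takes essentially the same route as the paper: use the lower Laplacian bound~\eqref{eq:0LaplacianBound} together with the Chern--Laplacian Green's function of Theorem~\ref{thm:Green}. The one organizational difference is that the paper applies the Green representation formula directly at a minimum point $p_0$ of $u$ (where $u(p_0)=0$ so the left side becomes $-\frac{1}{|M|_0}\|u\|_1$), then bounds the right side using $-\Delta_0 u\leq C_0$ and $\|G(p_0,\cdot)\|_{L^1}\leq D_2$; you instead derive the uniform bound $u(x)\geq\bar u-C$ for all $x$ and then take the infimum, reaching the same conclusion. A small point in your favor: the paper asserts the existence of a \emph{non-negative} Green's function outright, whereas Theorem~\ref{thm:Green} only gives $G\geq -D_1$; the shift by $D_1$ is harmless precisely because $\int_M\Delta_{\omega_0}u\,dV=0$ for a Gauduchon metric, and you spell this out explicitly, which makes the argument slightly more careful than the paper's phrasing.
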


\begin{proof}
We will make use of work of Alesker-Shelukhin \cite[Appendix A]{Alesker2013309}
on the existence of Green's functions for the Chern-Laplacian following
Chu-Tosatti-Weinkove \cite{CTW}. By Theorem \ref{thm:Green}, there
is a non-negative Green's function $G(p,q)$ which is smooth on $M\times M$
away from the diagonal. Furthermore,
\begin{equation}
\int_{M}G(p,q)\Delta_{0}\phi(q)\frac{\omega_{0}^{2}(q)}{2}=\phi(p)-\frac{1}{|M|_{0}}\int_{M}\phi\frac{\omega_{0}^{2}}{2},\forall\phi\in C^{\infty}(M),p\in M.\label{eq:add300}
\end{equation}
Apply (\ref{eq:add300}) to $u$ with $\min_{M}u=0$ and evaluating
at a minimal point $p_{0}$ gives
\[
-\frac{1}{|M|_{0}}\|u\|_{1}=\int_{M}G(p_{0},q)(\Delta_{0}u(q))\frac{\omega_{0}^{2}(q)}{2}.
\]
After a rearrangement and making application of (\ref{eq:0LaplacianBound}),
we find
\[
\|u\|_{1}=|M|_{0}\int_{M}G(p_{0},q)(-\Delta_{0}u(q))\frac{\omega_{0}^{2}(q)}{2}\leq C\int_{M}G(p_{0},q)\frac{\omega_{0}^{2}(q)}{2}\leq C.
\]
Where the last inequality follows from Theorem \ref{thm:Green}. We
have proved the estimate.
\end{proof}
\begin{thm}
\label{thm:oscbyL1}Notation as above. Suppose that $u\in\mathcal{A}(\omega_{0})$
and $\xi\in\mathbb{R}$ solving (\ref{eq:abTMA}). There is a
constant $C=C(\|F\|_{C^{2}},\omega_{0},\beta)>0$ so that 
\[
\sup u\leq C\|u\|_{1}.
\]
This theorem holds true whether the measure in question is $\omega_{0}^{2}/2$
or the normalized probability measure $\mu=\omega_{0}^{2}/2|M|_{0}$.
Therefore, we have $|u|$ uniformly bounded 
\end{thm}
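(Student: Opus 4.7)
The plan is to run a Moser iteration on the Chern Laplacian $\Delta_0$, using as inputs the Laplacian lower bound $\Delta_0 u \geq -C_0$ from the preceding lemma together with the hypothesis $u \geq 0$. The technical goal is a Caccioppoli-type estimate that iterates cleanly under the Sobolev inequality on the compact real $4$-manifold $M$, yielding $\sup u \leq C \|u\|_{L^1}$; combined with Theorem \ref{thm:L1apriori} this delivers the uniform bound on $|u|$ asserted in the concluding remark.

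First I would set $v = u + 1 \geq 1$ and test $-\Delta_0 v \leq C_0$ against $v^p$ for $p \geq 1$. Integration against $\omega_0^2$ and one application of Stokes on $M$ yield an identity for $\int_M v^p \, i\del\delb v \wedge \omega_0$; the torsion piece from $d\omega_0$ (present because $\omega_0$ is only pluriclosed, not K\"ahler) is absorbed by Cauchy-Schwarz in the standard Hermitian Monge-Amp\`ere manner (cf.~\cite{TW:MAonHerm, CTW}), using the Gauduchon condition and $n = 2$ to keep a second integration by parts tractable. The result is the Caccioppoli-type estimate
\[
\int_M v^{p-1}|\del v|^2_{\omega_0}\, \omega_0^2 \leq C(p+1)\int_M v^{p+1}\, \omega_0^2.
\]
Setting $\xi = v^{(p+1)/2}$ and applying the real Sobolev inequality $\|\xi\|_{L^4}^2 \leq C_S\bigl(\|\nabla\xi\|_{L^2}^2 + \|\xi\|_{L^2}^2\bigr)$ converts this to the iterative gain
\[
\|v\|_{L^{2(p+1)}}^{p+1} \leq C(p+1)\|v\|_{L^{p+1}}^{p+1}.
\]

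Second, I iterate along exponents $q_k = 2^k q_0$ and telescope the product $\prod_k (C q_k)^{1/q_k}$, which converges by the standard geometric-series calculation. Starting at $q_0 = 2$ and then using the interpolation $\|v\|_{L^2}^2 \leq \|v\|_{L^\infty}\|v\|_{L^1}$ together with a rearrangement produces $\|v\|_{L^\infty} \leq C\|v\|_{L^1}$, with $C$ depending only on $\|F\|_{C^2}$, $\omega_0$, and $\beta$. Rewriting $v = u + 1$ and absorbing the additive constant $|M|_{\omega_0}$ into the constant yields $\sup u \leq C'\|u\|_{L^1}$. Since $\omega_0^2/2$ and its normalization to a probability measure differ only by the fixed factor $|M|_{\omega_0}$, the proof is insensitive to that choice, justifying the remark following the statement.

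The principal obstacle is the handling of the torsion contribution from $d\omega_0$ in the Hermitian integration by parts: in higher dimensions this would require substantially more care, but in the $n = 2$ pluriclosed regime it can be dominated by the gradient term and absorbed via Cauchy-Schwarz, exactly as in the Chu-Tosatti-Weinkove analysis \cite{CTW}. The remaining ingredients---the Laplacian lower bound and the Sobolev embedding $W^{1,2} \hookrightarrow L^4$ on a compact $4$-manifold---are already in place, so closing the iteration is routine once the torsion absorption is carried out.
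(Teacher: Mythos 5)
Your proposal is correct and follows the same route as the paper: the paper's proof simply invokes Lemma~3.4 of Tosatti--Weinkove \cite{TWBalancedandHerm}, which is precisely the Moser iteration for functions whose Chern Laplacian is bounded below on a compact Gauduchon surface that you reconstruct from scratch. You have unpacked the cited lemma rather than quoting it, but the underlying argument (Caccioppoli estimate with the torsion term killed by a second integration by parts using the Gauduchon condition, Sobolev embedding $W^{1,2}\hookrightarrow L^4$, dyadic iteration, and the $L^2$-to-$L^1$ interpolation) is the same.
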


\begin{proof}
This result follows immediately from Theorem \ref{thm:L1apriori}
and Lemma 3.4 of Tosatti-Weinkove \cite{TWBalancedandHerm}, which
employs an iteration method for sub-harmonic functions on surfaces
with Gauduchon metric.
\end{proof}

\subsection{Estimates on diagonal terms of the Hessian}

In this sub-section, we aim to establish the crucial estimate that
bounds $\lambda$ and $\eta$ from both sides. We will make use of
the the following expressions for the linearized operator. For any
smooth function $\phi$ over $M,$ we define
\begin{equation}
L\phi=\beta\frac{\i\del_{+}\delb_{+}\phi}{\omega_{u}^{+}}+\frac{\i\del_{-}\delb_{-}\phi}{\omega_{u}^{-}}=\frac{\beta}{\lambda}\frac{\i\del_{+}\delb_{+}\phi}{\omega_{0}^{+}}+\frac{1}{\eta}\frac{\i\del_{-}\delb_{-}\phi}{\omega_{0}^{-}}.\label{eq:LinearizedAlphaTMA}
\end{equation}
We first state some computational results.
\begin{lem}
\label{lem:Lu}If $u\in\mathcal{A}(\omega_{0})$ solves (\ref{eq:abTMA})
with $\beta\in(0,1)$, then for any $\delta>0$, the following inequality
holds. 

\[
-Lu=\beta\frac{1}{\lambda}-\frac{e^{F+\xi}}{\lambda^{\beta}}+(1-\beta).
\]
\end{lem}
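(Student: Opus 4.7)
The statement appears to be a pure algebraic identity once the PDE is imposed; my plan is a direct three-step computation, with the only real care needed being the sign convention in the definition of $\Box$. (The phrase ``for any $\delta > 0$ the following inequality'' in the hypothesis does not actually enter the displayed identity; presumably $\delta$ governs a further inequality that continues below the excerpt.)

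First, I unwind the definitions in terms of $\lambda$ and $\eta$. Because $\Box u = i(\partial_+\bar\partial_+ u - \partial_-\bar\partial_- u)$, splitting $\omega_u = \omega_0 + \Box u$ along $T^+$ and $T^-$ gives $\omega_u^+ = \omega_0^+ + i\partial_+\bar\partial_+ u$ and $\omega_u^- = \omega_0^- - i\partial_-\bar\partial_- u$. Dividing by $\omega_0^+$ and $\omega_0^-$ respectively yields
$$\frac{i\partial_+\bar\partial_+ u}{\omega_0^+} = \lambda - 1, \qquad \frac{i\partial_-\bar\partial_- u}{\omega_0^-} = 1 - \eta,$$
where the minus sign in the second identity is the one place that could trip up the computation.

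Second, I substitute these into the definition (\ref{eq:LinearizedAlphaTMA}) of the linearized operator, obtaining
$$L u \;=\; \frac{\beta}{\lambda}(\lambda - 1) + \frac{1}{\eta}(1 - \eta) \;=\; \beta - \frac{\beta}{\lambda} + \frac{1}{\eta} - 1,$$
so that
$$-L u \;=\; \frac{\beta}{\lambda} - \frac{1}{\eta} + (1 - \beta).$$

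Third, I invoke the PDE (\ref{eq:AlphaTMA2}), which in the form $\eta = e^{-(F+\xi)}\lambda^\beta$ yields $1/\eta = e^{F+\xi}/\lambda^\beta$. Substituting into the previous display gives exactly the claimed formula. No analytic obstacle arises; the assumption $\beta\in(0,1)$ is not used in the identity itself, only later when the formula for $-Lu$ is fed into a maximum principle argument (where the convexity/concavity properties of $1/\lambda$ and $1/\lambda^\beta$ matter), and where $\delta > 0$ will presumably appear as a parameter in a Young-type splitting.
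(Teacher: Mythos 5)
Your proof is correct and follows essentially the same route as the paper: unwind the definition of $L$ using $\omega_u = \omega_0 + \Box u$, express the result in terms of $\lambda$ and $\eta$ to get $-Lu = \beta/\lambda - 1/\eta + (1-\beta)$, then substitute $1/\eta = e^{F+\xi}/\lambda^\beta$ from the PDE. Your observation that the phrase ``for any $\delta>0$, the following inequality holds'' is vestigial is also accurate — the displayed statement is an identity and $\delta$ plays no role in it.
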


\begin{proof}
Notice that since $\omega_{u}=\omega+\Box u$, by Equation \ref{eq:LinearizedAlphaTMA},
\begin{equation}
Lu=\beta\frac{\omega_{u}^{+}-\omega_{0}^{+}}{\omega_{u}^{+}}+\frac{\omega_{0}^{-}-\omega_{u}^{-}}{\omega_{u}^{-}}=\frac{1}{\eta}-\beta\frac{1}{\lambda}+(\beta-1).\label{eq:Lueqn}
\end{equation}
Combining (\ref{eq:Lueqn}) and (\ref{eq:AlphaTMA2}), we have proved
the claim.
\end{proof}
\begin{lem}
\label{lem:loglambda}If $u\in\mathcal{A}(\omega_{0})$
and $\xi\in\mathbb{R}$ as a pair solves (\ref{eq:abTMA}), then 
\begin{align*}
-L\log\lambda & =\frac{1}{\lambda}(|\del_{+}\log\eta|_{0}^{2}+2\Re(\langle\del_{+}\log\eta,\del_{+}\omega_{0}^{-}\rangle_{0})-\Delta_{0}^{+}F-\frac{\i\del_{-}\delb_{-}\omega_{0}^{+}}{\omega_{0}^{+}\wedge\omega_{0}^{-}})\\
 & +\frac{e^{F+\xi}}{\lambda^{\beta}}(|\del_{-}\log\lambda|_{0}^{2}+2\Re(\langle\del_{-}\log\lambda,\del_{-}\omega_{0}^{+}\rangle_{0})+\frac{\i\del_{-}\delb_{-}\omega_{0}^{+}}{\omega_{0}^{+}\wedge\omega_{0}^{-}}).
\end{align*}
 where $\Delta_{0}^{+}F:=(\i\del_{+}\delb_{+}F\wedge\omega_{0}^{-})/(\omega_{0}^{+}\wedge\omega_{0}^{-})$
and for a section $\mu=\mu_{+}+\mu_{-}\in\Lambda^{1,0}$, the norm
is defined as
\[
|\mu|_{0}^{2}=\frac{\i\mu_{+}\wedge\bar{\mu}_{+}\wedge\omega_{0}^{-}}{\omega_{0}^{+}\wedge\omega_{0}^{-}}+\frac{\i\mu_{-}\wedge\bar{\mu}_{-}\wedge\omega_{0}^{+}}{\omega_{0}^{+}\wedge\omega_{0}^{-}}.
\]
.
\end{lem}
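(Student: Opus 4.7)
The plan is to differentiate the logarithmic form of the PDE and carefully track the residual terms, which will cancel by pluriclosedness. Rewriting \eqref{eq:AlphaTMA2} as $\beta\log\lambda - \log\eta = F + \xi$ and applying $\i\del_+\delb_+$ gives $\beta\,\i\del_+\delb_+\log\lambda = \i\del_+\delb_+ F + \i\del_+\delb_+\log\eta$. Substituting into
\begin{equation*}
L\log\lambda = \frac{\beta}{\lambda}\frac{\i\del_+\delb_+\log\lambda}{\omega_0^+} + \frac{1}{\eta}\frac{\i\del_-\delb_-\log\lambda}{\omega_0^-}
\end{equation*}
isolates a $\frac{1}{\lambda}\Delta_0^+ F$ contribution and reduces the problem to computing the two ratios $\frac{\i\del_+\delb_+\log\eta}{\omega_0^+}$ and $\frac{\i\del_-\delb_-\log\lambda}{\omega_0^-}$ separately.

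Next I would work in the local holomorphic coordinates of Lemma \ref{lem:localcoordinate}, writing $\omega_0^+ = \i h^+\,dz\wedge d\bz$, $\omega_0^- = \i h^-\,dw\wedge d\bw$, and $\omega_u^\pm$ with $\lambda h^+$, $\eta h^-$ respectively. Expanding $(\log\eta)_{z\bz} = \eta_{z\bz}/\eta - |\eta_z|^2/\eta^2$ produces the $|\del_+\log\eta|_0^2$ term from the gradient piece. For the second-derivative piece, I would apply the product rule to $(\eta h^-)_{z\bz}$, which splits $\eta_{z\bz}$ into a $(\eta h^-)_{z\bz}$ part, cross derivatives $\eta_z (h^-)_{\bz} + \eta_{\bz}(h^-)_z$, and a pure term $\eta(h^-)_{z\bz}$. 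The cross derivatives assemble into $2\Re\langle\del_+\log\eta,\del_+\omega_0^-\rangle_0$, interpreting $\del_+\omega_0^-$ as the $(1,0)^+$-component of the Lee form of $\omega_0$, namely $\del_+\log h^-$. The pure term $(h^-)_{z\bz}/(h^+h^-)$ is then rewritten using the pluriclosedness of $\omega_0$, $(h^+)_{w\bw} + (h^-)_{z\bz} = 0$, to give $-\frac{\i\del_-\delb_-\omega_0^+}{\omega_0^+\wedge\omega_0^-}$.

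The analogous computation for $(\log\lambda)_{w\bw}$ proceeds by expanding $(\lambda h^+)_{w\bw}$ with the product rule and produces the mirrored second bracket, with the same gradient and cross-term structure (under the swap $\eta\leftrightarrow\lambda$, $+\leftrightarrow -$) but with the opposite sign on the curvature contribution, yielding $+\frac{\i\del_-\delb_-\omega_0^+}{\omega_0^+\wedge\omega_0^-}$. The two expansions leave behind residuals proportional to $(\eta h^-)_{z\bz}$ and $(\lambda h^+)_{w\bw}$, summing to $-\frac{(\eta h^-)_{z\bz} + (\lambda h^+)_{w\bw}}{\lambda\eta h^+h^-}$. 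This vanishes because $\omega_u$ is pluriclosed: indeed, $\omega_0$ is pluriclosed by assumption and $\i\del\delb\Box u = 0$ by the chain identity \eqref{eq:add010}, so $\i\del\delb\omega_u = 0$ and hence $(\eta h^-)_{z\bz}+(\lambda h^+)_{w\bw}=0$.

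The main obstacle is the combinatorial bookkeeping: one must carefully verify that the residual $\omega_u$-pluriclosed terms from the two brackets appear with matching coefficient $\frac{1}{\lambda\eta h^+h^-}$ so as to combine and cancel, while the explicit $\omega_0$-pluriclosed terms appear with opposite signs in the two brackets. Both pluriclosedness conditions are thus used simultaneously, but in logically distinct ways. After the cancellations, assembling the surviving pieces with coefficients $1/\lambda$ and $1/\eta = e^{F+\xi}/\lambda^\beta$ yields the claimed identity.
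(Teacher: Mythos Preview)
Your proposal is correct and follows essentially the same approach as the paper: differentiate the logarithmic form of the PDE, expand the second derivatives of $\log\lambda$ and $\log\eta$ via the product rule, and invoke the pluriclosedness of both $\omega_0$ and $\omega_u$ to rewrite and cancel the residual terms. The only cosmetic difference is ordering: you substitute the differentiated PDE into $L\log\lambda$ first and cancel the $\omega_u$-pluriclosed residuals at the end, whereas the paper first applies the $\omega_u$-pluriclosed identity to $\i\del_-\delb_-\lambda\wedge\omega_0^+$ and only then brings in the differentiated PDE, but the computations are the same.
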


\begin{proof}
Differentiating the logarithm of (\ref{eq:AlphaTMA2}), we get
\begin{equation}
\delb_{+}F=\beta\delb_{+}\log\lambda-\delb_{+}\log\eta,\label{eq:dF}
\end{equation}
\begin{equation}
\i\del_{+}\delb_{+}F=\beta\i\del_{+}\delb_{+}\log\lambda-\i\del_{+}\delb_{+}\log\eta.\label{eq:ddcF}
\end{equation}
For future use, we also list some direct consequence of the pluriclosed
condition. By (\ref{eq:add600}), $\omega_{u}=\lambda\omega_{0}^{+}+\eta\omega_{0}^{-}$
. Since both $\omega_{0}$ and $\omega_{u}$ are pluriclosed, we
have
\begin{equation}
\i\del\delb\omega_{0}=\i\del_{-}\delb_{-}\omega_{0}^{+}+\i\del_{+}\delb_{+}\omega_{0}^{-}=0;\label{eq:add511}
\end{equation}
\begin{align}
0= & \i\del\delb(\lambda\omega_{0}^{+}+\eta\omega_{0}^{-})\label{eq:pluriclosedcond}\\
= & \i\del_{-}\delb_{-}\lambda\wedge\omega_{0}^{+}+\i\del_{+}\delb_{+}\eta\wedge\omega_{0}^{-}\nonumber \\
 & +2\Re(\i\del_{-}\lambda\wedge\delb_{-}\omega_{0}^{+})+2\Re(\i\del_{+}\eta\wedge\delb_{+}\omega_{0}^{-})\nonumber \\
 & +\i\lambda\del_{-}\delb_{-}\omega_{0}^{+}+\i\eta\del_{+}\delb_{+}\omega_{0}^{-}.\nonumber 
\end{align}
Now we compute $L\log\lambda$ using (\ref{eq:add600}) and (\ref{eq:LinearizedAlphaTMA}).
\begin{align*}
L\log\lambda= & \beta\frac{\i\del_{+}\delb_{+}\log\lambda\wedge\omega_{0}^{-}}{\lambda\omega_{0}^{+}\wedge\omega_{0}^{-}}+\frac{\i\del_{-}\delb_{-}\log\lambda\wedge\omega_{0}^{+}}{\eta\omega_{0}^{+}\wedge\omega_{0}^{-}}\\
= & \beta\frac{\i\del_{+}\delb_{+}\log\lambda\wedge\omega_{0}^{-}}{\lambda\omega_{0}^{+}\wedge\omega_{0}^{-}}+\frac{\i\del_{-}\delb_{-}\lambda\wedge\omega_{0}^{+}}{\lambda\eta\omega_{0}^{+}\wedge\omega_{0}^{-}}-\frac{1}{\lambda^{2}\eta}\frac{\i\del_{-}\lambda\wedge\delb_{-}\lambda\wedge\omega_{0}^{+}}{\omega_{0}^{+}\wedge\omega_{0}^{-}}\\
\end{align*}
We can then apply  (\ref{eq:add511}) and (\ref{eq:pluriclosedcond}).
\begin{align*}
L\log \lambda = & \beta\frac{\i\del_{+}\delb_{+}\log\lambda\wedge\omega^{-}}{\lambda\omega_{0}^{+}\wedge\omega_{0}^{-}}-\frac{1}{\lambda^{2}\eta}\frac{\i\del_{-}\lambda\wedge\delb_{-}\lambda\wedge\omega_{0}^{+}}{\omega_{0}^{+}\wedge\omega_{0}^{-}}\\
 & -\frac{\i\del_{+}\delb_{+}\eta\wedge\omega_{0}^{-}+2\Re(\i\del_{-}\lambda\wedge\delb_{-}\omega_{0}^{+})+2\Re(\i\del_{+}\eta\wedge\delb_{+}\omega_{0}^{-})}{\lambda\eta\omega_{0}^{+}\wedge\omega_{0}^{-}}\\
 & -\frac{\i\lambda\del_{-}\delb_{-}\omega_{0}^{+}+\i\eta\del_{+}\delb_{+}\omega_{0}^{-}}{\lambda\eta\omega_{0}^{+}\wedge\omega_{0}^{-}}\\
 \end{align*}
 Finally, we can use (\ref{eq:dF}) and (\ref{eq:ddcF}) to obtain.
 \begin{align*}
L\log \lambda = & \beta\frac{\i\del_{+}\delb_{+}\log\lambda\wedge\omega_{0}^{-}}{\lambda\omega_{0}^{+}\wedge\omega_{0}^{-}}-\frac{\i\del_{+}\delb_{+}\log\eta\wedge\omega_{0}^{-}}{\lambda\omega_{0}^{+}\wedge\omega_{0}^{-}}\\
 & -\frac{1}{\lambda\eta^{2}}\frac{\i\del_{+}\eta\wedge\delb_{+}\eta\wedge\omega_{0}^{-}}{\omega_{0}^{+}\wedge\omega_{0}^{-}}-\frac{1}{\lambda^{2}\eta}\frac{\i\del_{-}\lambda\wedge\delb_{-}\lambda\wedge\omega_{0}^{+}}{\omega_{0}^{+}\wedge\omega_{0}^{-}}\\
 & -\frac{2\Re(\i\del_{-}\lambda\wedge\delb_{-}\omega_{0}^{+})+2\Re(\i\del_{+}\eta\wedge\delb_{+}\omega_{0}^{-})}{\lambda\eta\omega_{0}^{+}\wedge\omega_{0}^{-}}\\
 & -\frac{\i\lambda\del_{-}\delb_{-}\omega_{0}^{+}+\i\eta\del_{+}\delb_{+}\omega_{0}^{-}}{\lambda\eta\omega_{0}^{+}\wedge\omega_{0}^{-}}\\
 \end{align*}
 This can be simplified to 
 \begin{align*}
L\log \lambda = & \frac{1}{\lambda}(\Delta_{0}^{+}F-\frac{\i\del_{+}\delb_{+}\omega_{0}^{-}}{\omega_{0}^{+}\wedge\omega_{0}^{-}}-2\Re(\langle\del_{+}\log\eta,\del_{+}\omega_{0}^{-}\rangle_{0})-|\del_{+}\log\eta|_{0}^{2})\\
 & +\frac{1}{\eta}(-\frac{\i\del_{-}\delb_{-}\omega_{0}^{+}}{\omega_{0}^{+}\wedge\omega_{0}^{-}}-|\del_{-}\log\lambda|_{0}^{2}-2\Re(\langle\del_{-}\log\lambda,\del_{-}\omega_{0}^{+}\rangle_{0})).
\end{align*}
This can be simplified further by using (\ref{eq:abTMA}) and (\ref{eq:add511}), finishing the proof.
\end{proof}
\begin{lem}
\label{lem:logPhi}Notations as above. Consider $\Phi=\log\lambda+\psi(u)$
for some smooth test function $\psi$. Then at any critical point
of $\Phi$, the following holds.
\begin{align*}
 & -L\Phi=\\
 & \frac{1}{\lambda}[|\del_{+}\log\eta|_{0}^{2}+2\Re(\langle\del_{+}\log\eta,\del_{+}\omega_{0}^{-}\rangle_{0})-\beta\frac{\psi''}{(\psi')^{2}}|\del_{+}\log\lambda|_{0}^{2}+\beta\psi'-\frac{\i\del_{-}\delb_{-}\omega_{0}^{+}}{\omega_{0}^{+}\wedge\omega_{0}^{-}}-\Delta_{0}^{+}F]\\
 & +\frac{e^{F+\xi}}{\lambda^{\beta}}[(1-\frac{\psi''}{(\psi')^{2}})|\del_{-}\log\lambda|_{0}^{2}+2\Re(\langle\del_{-}\log\lambda,\del_{-}\omega_{0}^{+}\rangle_{0})-\psi'+\frac{\i\del_{-}\delb_{-}\omega_{0}^{+}}{\omega_{0}^{+}\wedge\omega_{0}^{-}}].\\
 & +\psi'(1-\beta)
\end{align*}
\end{lem}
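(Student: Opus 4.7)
The plan is to combine Lemma \ref{lem:loglambda} with a direct computation of $L\psi(u)$, and then impose the critical point condition. First, I would apply the chain rule to write $\i\del_\pm\delb_\pm\psi(u)=\psi'(u)\,\i\del_\pm\delb_\pm u+\psi''(u)\,\i\del_\pm u\wedge\delb_\pm u$. Using the decomposition $\omega_u=\omega_0+\Box u$ with $\Box=\i(\del_+\delb_+-\del_-\delb_-)$, I have $\i\del_+\delb_+u=(\lambda-1)\omega_0^+$ and $\i\del_-\delb_-u=(1-\eta)\omega_0^-$ (the sign flip on the $-$ sector being a feature of $\Box$). Together with the norm convention of Lemma \ref{lem:loglambda}, this yields
\[
L\psi(u)=\tfrac{\beta}{\lambda}\bigl[\psi'(\lambda-1)+\psi''|\del_+u|_0^2\bigr]+\tfrac{1}{\eta}\bigl[\psi'(1-\eta)+\psi''|\del_-u|_0^2\bigr].
\]

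Second, I would rewrite $1/\eta=e^{F+\xi}/\lambda^{\beta}$ via (\ref{eq:AlphaTMA2}) so that the prefactors in $L\psi(u)$ match those appearing in Lemma \ref{lem:loglambda}, and simply add: $-L\Phi=-L\log\lambda-L\psi(u)$. This produces the three constituent pieces: a $1/\lambda$ term carrying the contributions from the $+$ sector (including a $\beta\psi'$ and a $-\beta\psi''|\del_+u|_0^2$), an $e^{F+\xi}/\lambda^\beta$ term carrying the $-$ sector (with $-\psi'$ and $-\psi''|\del_-u|_0^2$), and a leftover scalar $(1-\beta)\psi'$ coming from the $\psi'(\beta-1)$ present in $L\psi(u)$.

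Third, I would invoke the critical point condition on $\Phi=\log\lambda+\psi(u)$. Splitting by bi-type, $d\Phi=0$ is equivalent to $\del_\pm\log\lambda+\psi'(u)\,\del_\pm u=0$, hence $|\del_\pm u|_0^2=\frac{1}{(\psi')^2}|\del_\pm\log\lambda|_0^2$. Substituting this converts every $|\del_\pm u|_0^2$ into $|\del_\pm\log\lambda|_0^2$ with the factor $\psi''/(\psi')^2$; in the $-$ sector this combines with the pre-existing $|\del_-\log\lambda|_0^2$ term from Lemma \ref{lem:loglambda} to give the announced coefficient $1-\psi''/(\psi')^2$. Collecting terms under the common prefactors $1/\lambda$ and $e^{F+\xi}/\lambda^{\beta}$, and keeping $\psi'(1-\beta)$ separate, yields exactly the formula in the statement.

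The work is almost entirely bookkeeping; there is no new a priori estimate to produce and no maximum principle is invoked at this step, so the only real obstacle is sign discipline. In particular I would be careful about the asymmetric sign introduced by $\Box$ on the $-$ sector, the sign with which $\i\del_-\delb_-\omega_0^+$ enters through pluriclosedness (the identity $\i\del_-\delb_-\omega_0^++\i\del_+\delb_+\omega_0^-=0$ used implicitly in Lemma \ref{lem:loglambda}), and the consistent use of the convention $|\del_\pm u|_0^2=\i\del_\pm u\wedge\delb_\pm u\wedge\omega_0^\mp/(\omega_0^+\wedge\omega_0^-)$. Once these are fixed consistently with Lemma \ref{lem:loglambda}, the summation reorganizes into the stated identity.
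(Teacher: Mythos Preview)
Your proposal is correct and follows essentially the same route as the paper: compute $L\psi(u)$ by the chain rule, use the critical point condition $\del_\pm\log\lambda=-\psi'\del_\pm u$ to convert $|\del_\pm u|_0^2$ into $\tfrac{1}{(\psi')^2}|\del_\pm\log\lambda|_0^2$, rewrite $1/\eta=e^{F+\xi}/\lambda^\beta$, and add the result to Lemma~\ref{lem:loglambda}. The only cosmetic difference is that the paper packages the scalar part $\tfrac{\beta}{\lambda}\psi'(\lambda-1)+\tfrac{1}{\eta}\psi'(1-\eta)$ as $\psi'\,Lu$ and then invokes Lemma~\ref{lem:Lu}, whereas you expand it directly; the two computations are identical.
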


\begin{proof}
The critical point condition is 
\begin{equation}
0=\frac{d\lambda}{\lambda}+\psi'du.\label{eq:critpointcond}
\end{equation}
We may compute the following, while applying (\ref{eq:critpointcond}):
\begin{align}
L\psi(u)= & \psi'Lu+\psi''[\frac{\beta}{\lambda}|\del_{+}u|_{0}^{2}+\frac{1}{\eta}|\del_{-}u|_{0}^{2}]\nonumber \\
= & \psi'Lu+\frac{\psi''}{(\psi')^{2}}[\frac{\beta}{\lambda}|\del_{+}\log\lambda|_{0}^{2}+\frac{e^{F+\xi}}{\lambda^{\beta}}|\del_{-}\log\lambda|_{0}^{2}]\nonumber \\
= & \frac{\beta}{\lambda}(\frac{\psi''}{(\psi')^{2}}|\del_{+}\log\lambda|_{0}^{2}-\psi')+\frac{e^{F+\xi}}{\lambda^{\beta}}(\frac{\psi''}{(\psi')^{2}}|\del_{-}\log\lambda|_{0}^{2}+\psi')-\psi'(1-\beta)\label{eq:add602}
\end{align}
 We then combine (\ref{eq:add602}) and Lemma \ref{lem:loglambda}
to prove our claim.
\end{proof}
Finally, we state first part of our $C^{2}$ estimate:
\begin{thm}
\label{thm:lowerbound}Notation as before. For any $u\in\mathcal{A}(\omega_{0})$
and $\xi\in\mathbb{R}$ solving (\ref{eq:abTMA}) for $\beta\in(0,1)$,
there exists $C>0$ depending only on $\beta$, $\|F\|_{C^{2}}$,
and $\omega_{0}$ s.t.
\[
\omega_{u}^{+}\geq C\omega_{0}^{+}.
\]
\end{thm}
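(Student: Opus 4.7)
The strategy is a minimum principle argument applied to the auxiliary function
\[
\Phi = \log\lambda + \psi(u),
\]
where $\psi$ is chosen so that $\psi'$ is a large positive constant; for concreteness take $\psi(u) = A u$ with $A$ to be determined. Since $u$ is uniformly bounded by Theorem \ref{thm:oscbyL1}, a uniform lower bound on $\Phi$ translates directly into a positive lower bound on $\lambda$, which is equivalent to the conclusion $\omega_u^{+} \geq C\omega_0^{+}$.

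At a minimum point $p_0$ of $\Phi$ one has $L\Phi \geq 0$, hence the expression for $-L\Phi$ computed in Lemma \ref{lem:logPhi} is nonpositive at $p_0$. Substituting $\psi(u) = Au$ (so $\psi' = A$, $\psi'' = 0$) and using the identity $e^{F+\xi}/\lambda^{\beta} = 1/\eta$ coming from (\ref{eq:AlphaTMA2}), the two bad cross-terms
$2\Re\langle \partial_+\log\eta, \partial_+\omega_0^{-}\rangle_0$ and
$2\Re\langle \partial_-\log\lambda, \partial_-\omega_0^{+}\rangle_0$
can be absorbed by completing the square into the good quadratic gradient terms $|\partial_+\log\eta|_0^2$ and $|\partial_-\log\lambda|_0^2$ present in the same brackets. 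After absorption, the inequality $-L\Phi \leq 0$ at $p_0$ reduces to the schematic form
\[
\frac{\beta A - C_1}{\lambda(p_0)} \;-\; \frac{(A + C_2)\, e^{F+\xi}}{\lambda(p_0)^{\beta}} \;+\; A(1 - \beta) \;\leq\; 0,
\]
where the constants $C_1, C_2$ depend only on $\beta$, $\|F\|_{C^2}$, and $\omega_0$, and where we have also used Lemma \ref{lem:best} to bound $\xi$.

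The decisive observation is that the assumption $\beta \in (0,1)$ forces $\lambda^{-1}$ to strictly dominate $\lambda^{-\beta}$ as $\lambda \to 0$. Choosing $A$ sufficiently large that $\beta A > C_1$, the left-hand side tends to $+\infty$ as $\lambda(p_0) \to 0$, contradicting the displayed inequality. Hence $\lambda(p_0)$ is bounded below by a positive constant depending only on $\beta$, $\|F\|_{C^2}$, and $\omega_0$. Since $\Phi \geq \Phi(p_0)$ everywhere, combining with the $C^0$ bound on $u$ from Theorem \ref{thm:oscbyL1} yields the global lower bound $\lambda \geq c > 0$.

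The principal obstacle is that equation (\ref{eq:abTMA}) is not concave when $\beta \neq 1$, so the usual Evans--Krylov or Caffarelli-type concavity arguments are unavailable. The whole estimate rests on the strict inequality $\beta < 1$: it is this inequality that both produces the positive constant $(1-\beta)$ in $-Lu$ of Lemma \ref{lem:Lu} and provides the crucial asymptotic dominance $\lambda^{-1} \gg \lambda^{-\beta}$ as $\lambda \to 0$. Calibrating the constant $A$ so that the coefficient $\beta A$ still exceeds the constant $C_1$ generated by absorbing the torsion/curvature cross-terms of the non-K\"ahler background $\omega_0$ is the key technical balancing act.
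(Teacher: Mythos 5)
Your proof is correct and follows essentially the same path as the paper: both take $\Phi = \log\lambda + Au$ with $A$ a large constant, evaluate at a minimum of $\Phi$, invoke Lemma \ref{lem:logPhi}, absorb the cross-terms by Cauchy--Schwarz into the quadratic gradient terms, and exploit $\beta<1$ so that the $\lambda^{-1}$ term dominates $\lambda^{-\beta}$ as $\lambda\to 0$. The only cosmetic difference is the final step: the paper applies Young's inequality $\lambda^{-\beta}\le \beta\epsilon\lambda^{-1}+(1-\beta)\epsilon^{-\beta/(1-\beta)}$ and then chooses $\epsilon$ and $A$ explicitly to extract a clean bound $\lambda(p)\geq C$, whereas you argue qualitatively that the left side blows up as $\lambda(p_0)\to 0$; these are equivalent.
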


\begin{proof}
We use $C$, $C_{1}$ to denote a positive constant that depends only
on $\beta,$ $\|F\|_{C^{2}}$, and $\omega_{0}$, which may change
from line to line unless otherwise mentioned. We take $\psi(x)=Ax$
in Lemma \ref{lem:logPhi}, whereas $A>0$ is a constant to be determined
later. By the Cauchy-Schwarz and arithmetic-geometric mean inequalities,
we have
\begin{align}
2\Re(\langle\del_{+}\log\eta,\del_{+}\omega_{0}^{-}\rangle_{0}) & \geq-\frac{1}{2}|\del_{+}\log\eta|_{0}^{2}-C,\label{eq:700}\\
2\Re(\langle\del_{-}\log\lambda,\del_{-}\omega_{0}^{+}\rangle_{0}) & \geq-\frac{1}{2}|\del_{-}\log\lambda|_{0}^{2}-C.\label{eq:add601}
\end{align}
 Therefore, for any $\epsilon>0$, at point $p\in M$ for which $\Phi(p)=\min\Phi$,
by Lemma \ref{lem:best}, Lemma \ref{lem:logPhi}, (\ref{eq:700})
and (\ref{eq:add601}), we have:

\begin{align}
0\geq-L\Phi\geq & \frac{1}{\lambda}(\beta A-C)-\frac{C}{\lambda^{\beta}}(A+1)+A(1-\beta).\nonumber \\
\geq & \frac{1}{\lambda}[\beta A-C-\beta C\epsilon(A+1)]+(1-\beta)A-C\epsilon^{-\frac{\beta}{1-\beta}}(A+1)\nonumber \\
> & \frac{1}{\lambda}[\beta A(1-\epsilon C)-C-\beta C\epsilon]-C\epsilon^{-\frac{\beta}{1-\beta}}(A+1),\label{eq:add603}
\end{align}
where the second line follows from Young's inequality. We first fix
$C>0$, then we pick $\epsilon=\frac{1}{2C}$. (\ref{lem:best}) then
leads to 

\begin{equation}
0\geq[C_{1}A-C_{2}]-C_{3}(A+1)\lambda(p),\label{eq:add604}
\end{equation}
where $C_{i}=C_{i}(\beta,\|F\|_{C^{2}},\omega_{0})>0.$ Then, we
pick $A=2C_{2}/C_{1}>0$ in (\ref{eq:add604}) to obtain

\[
\lambda(p)\geq C>0.
\]
Finally, by Theorem \ref{thm:oscbyL1}, and the choice of $p$ and
$\Phi$, we obtain our estimate.
\end{proof}
Next, we apply Lemma \ref{lem:logPhi} with a different test function
to obtain the upper bound. 
\begin{thm}
\label{thm:upperbound}Supposing $u\in\mathcal{A}(\omega_{0})$
and $\xi\in\mathbb{R}$ as a pair is a solution of (\ref{eq:abTMA})
for $\beta\in(0,1)$, then there exists a constant $C=C(\|F\|_{C^{2}},\omega_{0},\beta)>0$
such that
\[
\omega_{u}^{+}\leq C\omega_{0}^{+}.
\]
\end{thm}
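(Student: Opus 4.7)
The plan is to run an argument parallel to Theorem~\ref{thm:lowerbound}, but now at a maximum of the test function and leveraging the lower bound $\lambda \geq c > 0$ already in hand. As a first attempt, I would consider
\[
\Phi = \log \lambda - A u
\]
with $A > 0$ a large constant, and let $p\in M$ be a maximum point of $\Phi$. Since $u$ is uniformly bounded by Theorem~\ref{thm:oscbyL1}, any pointwise upper bound on $\lambda(p)$ immediately translates into a uniform upper bound on $\lambda$ over $M$, which is precisely the content of the theorem.

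At $p$ the maximum principle gives $-L\Phi(p) \geq 0$. Plugging $\psi(x) = -Ax$, with $\psi' = -A$ and $\psi'' = 0$, into Lemma~\ref{lem:logPhi} yields an identity of the form
\begin{align*}
-L\Phi
&= \frac{1}{\lambda}\bigl[|\del_{+}\log\eta|_{0}^{2} + 2\Re\langle \del_{+}\log\eta, \del_{+}\omega_{0}^{-}\rangle_{0} - \beta A + \mathrm{bdd}\bigr]\\
&\quad + \frac{e^{F+\xi}}{\lambda^{\beta}}\bigl[|\del_{-}\log\lambda|_{0}^{2} + 2\Re\langle \del_{-}\log\lambda, \del_{-}\omega_{0}^{+}\rangle_{0} + A + \mathrm{bdd}\bigr]\\
&\quad - A(1-\beta).
\end{align*}
Controlling the cross terms by Cauchy-Schwarz as in~(\ref{eq:700})-(\ref{eq:add601}), trading $1/\lambda^{\beta}$ against $1/\lambda$ via the same Young-type inequality used in Theorem~\ref{thm:lowerbound}, and invoking Theorem~\ref{thm:lowerbound} to treat $1/\lambda$ and $1/\lambda^{\beta}$ as uniformly bounded, I would aim for a pointwise estimate
\[
0 \leq -L\Phi(p) \leq \frac{C_{1}(A)}{\lambda(p)} + \frac{C_{2}(A)}{\lambda(p)^{\beta}} - A(1-\beta)
\]
with constants depending only on $F$, $\omega_{0}$, $\beta$. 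Since $\beta<1$, choosing $A$ large enough then forces $\lambda(p) \leq C$, because otherwise the right-hand side becomes strictly negative, contradicting $-L\Phi(p)\geq 0$.

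The main obstacle is handling the nonnegative gradient-squared terms $|\del_{+}\log\eta|_{0}^{2}$ and $|\del_{-}\log\lambda|_{0}^{2}$ in the identity for $-L\Phi$: for an upper bound on $-L\Phi$ one cannot simply drop them, and the critical-point identity $d\log\lambda = A\, du$ only expresses them in terms of $|\nabla u|^{2}$, which is not yet controlled a priori. The expected remedy is to replace the linear $\psi(x)=-Ax$ by a nonlinear variant --- for instance $\psi(x) = -Ax + B(M-x)^{-1}$ with $M > \sup u$ --- so that the factors $-\beta\psi''/(\psi')^{2}$ and $1-\psi''/(\psi')^{2}$ appearing in Lemma~\ref{lem:logPhi} contribute sufficiently negative coefficients in front of $|\del_{+}\log\lambda|_{0}^{2}$ and $|\del_{-}\log\lambda|_{0}^{2}$ to absorb the troublesome gradient terms, once one uses $|\del_{+}\log\eta|_{0}^{2} \leq 2\beta^{2}|\del_{+}\log\lambda|_{0}^{2} + 2|\del_{+}F|_{0}^{2}$ obtained by differentiating~(\ref{eq:AlphaTMA2}). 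This nonlinear tweak of $\psi$ is the decisive technical step compensating for the non-concavity of~(\ref{eq:abTMA}), and it mirrors, with reversed signs, the linear choice that succeeded at the minimum in Theorem~\ref{thm:lowerbound}.
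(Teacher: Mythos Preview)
Your plan is exactly the paper's: apply Lemma~\ref{lem:logPhi} to $\Phi=\log\lambda+\psi(u)$ at a maximum, convert $|\del_{+}\log\eta|_{0}^{2}$ into $|\del_{+}\log\lambda|_{0}^{2}$ via~(\ref{eq:dF}), and pick a nonlinear $\psi$ so that $\psi''/(\psi')^{2}>1$ kills both gradient-squared coefficients while $\psi'<0$ supplies the negative constant $\psi'(1-\beta)$. The paper's concrete choice is $\psi(x)=\tau x-\log(x+1)$ with $\tau=\tfrac14(1+\operatorname{osc}u)^{-1}$, which on $[0,\operatorname{osc}u]$ yields simultaneously $\psi'\in[\tau-1,-3\tau]$ and $\psi''/(\psi')^{2}=(\tau(1+x)-1)^{-2}>1$; after dropping the gradient terms one is left with $0\le C_{3}\lambda(p)^{-1}+C_{1}\lambda(p)^{-\beta}-C_{2}$, which bounds $\lambda(p)$ directly.

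One caution about your suggested $\psi(x)=-Ax+B(M-x)^{-1}$: the two requirements compete. Keeping $|\psi'|=A-B(M-x)^{-2}$ bounded below forces $A$ well above $B(M-x)^{-2}$, but then $\psi''/(\psi')^{2}=2B(M-x)^{-3}\big/(A-B(M-x)^{-2})^{2}$ becomes small; you would have to tune $A,B,M$ rather delicately in terms of $\operatorname{osc}u$, whereas the logarithmic choice avoids this. Also, the appeal to Theorem~\ref{thm:lowerbound} is unnecessary: the paper's upper-bound argument is independent of the lower bound, and the final inequality above already forces $\lambda(p)\le C$ without knowing $\lambda\ge c$.
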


\begin{proof}
We consider a test function $\Phi=\log\lambda+\psi(u)$ for some smooth
test function $\psi$, which is to be deterimined later. We also consider
point $p\in M$ such that $\Phi(p)=\max\Phi.$ 

First, we use the Cauchy-Schwarz and Young's inequalities to find
that for any $\delta>0$, the following hold:
\begin{align}
 & |\del_{+}\log\eta|^{2}+2\Re(\langle\del_{+}\log\eta,\del_{+}\omega_{0}^{-}\rangle_{0})\label{eq:add606}\\
= & |\del_{+}(F-\beta\log\lambda)|_{0}^{2}+2\Re(\langle\del_{+}(F-\beta\log\lambda),\del_{+}\omega_{0}^{-}\rangle_{0})\\
\leq & \beta^{2}|\del_{+}\log\lambda|_{0}^{2}+\beta C|\del_{+}(\log\lambda)|_{0}+C\nonumber \\
\leq & (\beta^{2}+\delta\beta)|\del_{+}\log\lambda|_{0}^{2}+C(1+\frac{1}{\delta});\nonumber 
\end{align}
\begin{align*}
2\Re(\langle\del_{-}\log\lambda,\del_{-}\omega_{0}^{+}\rangle_{0}) & \leq\delta|\del_{-}\log\lambda|_{0}^{2}+\frac{C}{\delta},
\end{align*}
We apply Lemma \ref{lem:logPhi} and (\ref{eq:add606}) to obtain
the following estimate at point $p$: 
\begin{align}
0\leq & \frac{1}{\lambda}(\beta^{2}+\delta\beta-\frac{\psi''}{(\psi')^{2}})|\del_{+}\log\lambda|_{0}^{2}+\beta\psi'+C(1+\frac{1}{\delta}))]\nonumber \\
 & +\frac{C}{\lambda^{\beta}}[(1+\delta-\frac{\psi''}{(\psi')^{2}})|\del_{-}\log\lambda|_{0}^{2}-\psi'+C(1+\frac{1}{\delta})]\nonumber \\
 & +\psi'(1-\beta).\label{eq:666}
\end{align}
We now pick $\psi(x)=\tau x-\log(x+1)$ where $\tau=\frac{1}{4}(1+\mathrm{osc}u)^{-1}$.
Then by Theorem \ref{thm:oscbyL1} and (\ref{eq:add610}), we get
\begin{align}
\psi'(u)= & \tau-\frac{1}{1+u}\in[\tau-1,-3\tau]\label{eq:668}\\
\psi''(u)= & \frac{1}{(1+u)^{2}}\geq16\tau^{2}>0,\nonumber 
\end{align}
and
\[
\frac{\psi''(u)}{(\psi'(u))^{2}}=\frac{1}{(\tau(1+u)-1)^{2}}\geq\frac{16}{9}.
\]
Thus, if $\delta=1/2,$ we have 
\begin{equation}
\beta^{2}+\delta\beta-\frac{\psi''}{(\psi')^{2}}<1+\delta-\frac{\psi''}{(\psi')^{2}}<\frac{3}{2}-\frac{16}{9}<0.\label{eq:667}
\end{equation}
\[
\]
Now we combine (\ref{eq:666}), (\ref{eq:668}) and (\ref{eq:667})
to get bounded $C_{1},$ $C_{2}$, $C_{3},$ where $C_{1}=3\tau>0$
and $C_{2}>0,$ such that
\begin{align*}
0\leq & C_{3}\lambda(p)^{-1}+C_{1}\lambda(p)^{-\beta}-C_{2},
\end{align*}
which, by Young's inequality, leads to
\begin{align*}
0\leq & C_{3}+C_{1}(C_{\epsilon}+\epsilon\lambda(p))-C_{2}\lambda(p),
\end{align*}
where $C_{\epsilon}>0$ depending on $\epsilon.$ Now we may choose
$\epsilon=\frac{C_{2}}{2C_{1}}$ to get 
\begin{equation}
\lambda(p)\leq C<\infty.\label{eq:add669}
\end{equation}

Finally, by Theorem \ref{thm:oscbyL1}, and the choice of $p$ and
$\Phi$, we obtain our estimate.
\end{proof}
Theorems \ref{thm:lowerbound} \& \ref{thm:upperbound} imply that
the metric is uniformly equivalent to the background metric.
\begin{cor}
\label{cor:fullmetricbound}As an immediate consequence of Theorems
\ref{thm:lowerbound} \& \ref{thm:upperbound} with $\beta\in(0,1)$
and $u$, $b$ as noted in those theorems,
\begin{align}
C^{-1}\omega_{0}\leq & \omega_{u}\leq C\omega_{0}.\label{eq:fullmetricbound}
\end{align}
In particular, the linearized operator $L$ defined in (\ref{thm:alpha})
is uniformly elliptic.
\end{cor}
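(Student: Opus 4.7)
The plan is to simply chain together the two preceding theorems together with Lemma \ref{lem:best} and the equation (\ref{eq:AlphaTMA2}) itself. Theorems \ref{thm:lowerbound} and \ref{thm:upperbound} already give two-sided bounds $C^{-1} \leq \lambda \leq C$ on the ratio $\lambda = \omega_u^+/\omega_0^+$. So the only genuine work left is to transfer this to a two-sided bound on $\eta = \omega_u^-/\omega_0^-$, and then to assemble the full metric bound and the ellipticity statement.

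First, I would invoke the PDE in the form (\ref{eq:AlphaTMA2}), namely $\lambda^\beta = e^{F+\xi}\eta$, rearranged as $\eta = e^{-F-\xi}\lambda^\beta$. Since Lemma \ref{lem:best} provides $|\xi| \leq \|F\|_\infty$ and $F$ is fixed, the factor $e^{-F-\xi}$ is bounded above and below by constants depending only on $\|F\|_\infty$. Combined with $C^{-1}\leq \lambda \leq C$, this yields
\begin{equation}
\tilde{C}^{-1} \leq \eta \leq \tilde{C}
\end{equation}
for a constant $\tilde{C}$ depending only on $\beta$, $\|F\|_{C^2}$, and $\omega_0$.

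Next, since $\omega_u^\pm = \lambda\omega_0^+$ and $\eta\omega_0^-$ respectively, and $\omega_0 = \omega_0^+ + \omega_0^-$, the decomposition of split type gives
\begin{equation}
\omega_u = \lambda\,\omega_0^+ + \eta\,\omega_0^-,
\end{equation}
so that the uniform bounds on $\lambda$ and $\eta$ directly yield $C^{-1}\omega_0 \leq \omega_u \leq C \omega_0$, after enlarging $C$ if necessary. This is exactly (\ref{eq:fullmetricbound}).

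Finally, the uniform ellipticity of $L$ follows by inspection of (\ref{eq:LinearizedAlphaTMA}): the coefficients $\beta/\lambda$ and $1/\eta$ are bounded above and below by positive constants, so $L$ is comparable (in the sense of positive-definite quadratic forms acting on $i\partial\bar\partial$) to the decoupled operator $\frac{i\partial_+\bar\partial_+}{\omega_0^+} + \frac{i\partial_-\bar\partial_-}{\omega_0^-}$, which is uniformly elliptic with respect to $\omega_0$. There is no real obstacle here, as all the heavy lifting has already been carried out in Theorems \ref{thm:lowerbound} and \ref{thm:upperbound}.
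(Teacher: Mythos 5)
Your proof is correct and follows precisely the route the paper intends: use the two-sided bound on $\lambda$ from Theorems \ref{thm:lowerbound} and \ref{thm:upperbound}, transfer it to $\eta$ via equation (\ref{eq:AlphaTMA2}) together with the bound on $\xi$ from Lemma \ref{lem:best}, and then read off both the metric equivalence and the uniform ellipticity of $L$ from the boundedness of the coefficients. The paper's own proof is a one-line citation of exactly these ingredients; you have supplied the (short) details faithfully.
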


\begin{proof}
This is just a direct consequence of Theorems \ref{thm:lowerbound}
\& \ref{thm:upperbound}, Lemma \ref{lem:best}, and our equation
\ref{eq:AlphaTMA2}. 
\end{proof}

\subsection{Full $C^{2}$ estimate}

In this subsection, we estimate the mixed second derivatives. Unlike
the situation in the usual Monge-Amp\`ere equations, where there is
direct control of off-diagonal terms of Hessian due to the PDE, these
mixed derivatives are not appearing in our geometric equations directly. 

First, we list some regularity results that may be obtained already.
\begin{prop}
\label{prop:W2p}Notation as above. For any $p>1$ and $\epsilon>0,$there
exists a univeral constant $C$ and depends only on $(F,\omega_{0},p,\epsilon)$
such that $\|\nabla^{2}u\|_{L^{p}}\leq C,\ \|\nabla u\|_{C^{\epsilon}}\leq C.$
\end{prop}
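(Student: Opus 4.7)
The plan is to leverage the diagonal Hessian estimates of Corollary \ref{cor:fullmetricbound} together with linear elliptic $L^{p}$ theory to control the full second-order jet of $u$, including the mixed second derivatives that the nonlinear equation does not bound directly. Working in local split holomorphic coordinates $(z,w)$ from Lemma \ref{lem:localcoordinate} and writing $\omega_{0}=\i g^{+}dz\wedge d\bz+\i g^{-}dw\wedge d\bw$, we have $\Box u=\i(u_{z\bz}dz\wedge d\bz-u_{w\bw}dw\wedge d\bw)$, so the two-sided bound $C^{-1}\omega_{0}\leq\omega_{u}\leq C\omega_{0}$ translates into pointwise control on the diagonal terms $u_{z\bz}$ and $u_{w\bw}$.

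A short computation shows that because $\omega_{0}$ is of split type, its Chern Laplacian takes the form
\[
\Delta_{\omega_{0}}u=\frac{1}{2g^{+}}u_{z\bz}+\frac{1}{2g^{-}}u_{w\bw},
\]
so $\Delta_{\omega_{0}}u$ is uniformly bounded on $M$, even though the mixed derivatives $u_{z\bw}$ and $u_{w\bz}$ do not appear in it. Crucially, $\Delta_{\omega_{0}}$ is a uniformly elliptic linear second-order operator on the real four-manifold $M$ with smooth, $u$-independent coefficients, so the standard linear $L^{p}$ machinery is available.

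With $\|u\|_{L^{\infty}}\leq C$ from Theorem \ref{thm:oscbyL1} and $\|\Delta_{\omega_{0}}u\|_{L^{\infty}}\leq C$ in hand, I would apply the interior Calder\'on--Zygmund $L^{p}$ estimates on a finite holomorphic atlas of $M$, together with a standard covering/patching argument, to obtain $\|u\|_{W^{2,p}(M)}\leq C(F,\omega_{0},\beta,p)$ for every $p\in(1,\infty)$; this recovers the missing mixed second derivatives through pure linear theory rather than through the fully nonlinear equation. Finally, for $p>4=\dim_{\mathbb{R}}M$, Morrey's embedding $W^{2,p}(M)\hookrightarrow C^{1,1-4/p}(M)$ yields the stated $\|\nabla u\|_{C^{\epsilon}}\leq C$ for any prescribed $\epsilon\in(0,1)$. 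There is no serious obstacle at this step: the principal difficulty, the absence of any equation-level bound on the mixed Hessian entries, is dispatched entirely by Calder\'on--Zygmund, and no estimate beyond those already established in this section is required.
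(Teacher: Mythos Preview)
Your proposal is correct and follows essentially the same route as the paper: from the diagonal bounds of Corollary \ref{cor:fullmetricbound} one extracts a uniform $L^{\infty}$ bound on $\Delta_{\omega_{0}}u$, feeds this into linear Calder\'on--Zygmund theory to get $W^{2,p}$ control on the full Hessian, and then applies Sobolev/Morrey embedding for the $C^{1,\epsilon}$ bound. The paper's proof is simply a terser statement of exactly these three steps.
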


\begin{proof}
By Theorems \ref{thm:lowerbound} \& \ref{thm:upperbound}, we have
obtained uniform lower and upper bound of $\Delta u$, which implies
the Hessian $L^{p}$ estimate. We can then apply the standard Soblev
inequality to obtain the $C^{1,\epsilon}$ estimate of $u.$ 
\end{proof}
In general, a Laplacian bound does not imply a uniform bound of the
full Hessian in non-linear PDE theory. Proposition \ref{prop:W2p}
is the optimal estimate. 

We shall run the $C^{2}$ estimate one more time, which will greatly
simplified with existing estimates. In order to proceed, we use Lemma
\ref{lem:localcoordinate} to work locally in an open neighborhood $U$
of $M$, where there exists local holomorphic function $z$ and $w$
such that in $U,$ $T^{+}=\mathrm{span\{\frac{\partial}{\partial z}\}}$
and $T^{-}=\mathrm{span}\{\frac{\partial}{\partial w}\}$. We also
write
\[
\omega_{0}=\sqrt{-1}(gdz \wedge d\bar{z}+hdw\wedge d\bar{w}).
\]

\begin{rem}
\label{rem:local data}Since $M$ is compact, by a covering argument,
we may assume $g,h,g^{-1},h^{-1}$, and their derivatives are universally
bounded. 
\end{rem}

We continue to express the linearized operator $L$ in (\ref{eq:AlphaTMA2})
locally 
\begin{equation}
L\phi=\beta\frac{\i\del_{+}\delb_{+}\phi}{\omega_{u}^{+}}+\frac{\i\del_{-}\delb_{-}\phi}{\omega_{u}^{-}}=\frac{\beta}{\lambda g}\phi_{z\bar{z}}+\frac{1}{\eta h}\phi_{w\bar{w}}.\label{eq:add86}
\end{equation}
By Corollary \ref{cor:fullmetricbound}, $L$ is uniformly elliptic.

Let $\{x^{i}\}_{i=1}^{4}$be $\Re(z),\Im(z),\Re(w),\Im(w)$, respectively.
We use $\tilde{g}$ to denote the standard Euclidean metric in $U$
with coordinates $\{x^{i}\}.$ Let $\delta=\sum a^{i}\frac{\partial}{\partial x^{i}}$
be a local vector field in $U$, where $a^{i}\in\mathbb{R}$ with
$\sum a_{i}^{2}\leq1$. For simplicity, for any smooth function f,
we write $\delta f$ as $f_{\delta}$. It is clear that by our set-up
\begin{equation}
|f_{\delta}|^{2}\leq|\tilde{\nabla}f|^{2}:=\sum|\frac{\partial f}{\partial x^{i}}|^{2}.\label{eq:add41}
\end{equation}
For future use, we also define 
\begin{equation}
\tilde{\Delta}u:=\sum(\frac{\partial}{\partial x^{i}})^{2}u=\i(u_{z\bar{z}}+u_{w\bar{w}}).\label{eq:add87}
\end{equation}

\begin{rem}
It is important to note that in the setting above, we have $f_{\delta z}=f_{z\delta},$
$f_{\delta w}=f_{w\delta},$$f_{\delta\bar{z}}=f_{\bar{z}\delta},$$f_{\delta\bar{w}}=f_{\bar{w}\delta}$.
\end{rem}

\begin{lem}
\label{lem:subsolution}For $\delta u$, locally we have univeral
constants $C_{1}$, $C_{2}$ and $C_{3}$ that is independent of $u$
such that
\begin{align}
L(u_{\delta\delta}) & \geq C_{1};\label{eq:add80}\\
L(C_{2}\Delta u-u_{\delta\delta}) & \geq C_{3.}\label{eq:add81}
\end{align}
\end{lem}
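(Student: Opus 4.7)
The plan is to differentiate the logarithm of equation (\ref{eq:AlphaTMA2}) twice in the direction $\delta$. Locally, write $\tilde\lambda := g + u_{z\bz}$ and $\tilde\eta := h - u_{w\bw}$, so that $\lambda = \tilde\lambda/g$, $\eta = \tilde\eta/h$, and the equation becomes
\[
\beta \log \tilde\lambda - \log \tilde\eta = \tilde F, \qquad \tilde F := F + \xi + \beta \log g - \log h,
\]
where $\tilde F$ has universally controlled derivatives by Remark \ref{rem:local data} and Lemma \ref{lem:best}. Differentiating once in $\delta$ yields
\[
\frac{\beta \tilde\lambda_\delta}{\tilde\lambda} - \frac{\tilde\eta_\delta}{\tilde\eta} = \tilde F_\delta. \qquad (\ast)
\]
Differentiating $(\ast)$ again in $\delta$, using that $\partial/\partial x^k$ commutes with $\partial/\partial z$, $\partial/\partial \bz$, $\partial/\partial w$, $\partial/\partial \bw$, and rearranging via (\ref{eq:add86}) produces the master identity
\[
L(u_{\delta\delta}) \;=\; \tilde F_{\delta\delta} - \frac{\beta g_{\delta\delta}}{\tilde\lambda} + \frac{h_{\delta\delta}}{\tilde\eta} + \frac{\beta \tilde\lambda_\delta^{\,2}}{\tilde\lambda^2} - \frac{\tilde\eta_\delta^{\,2}}{\tilde\eta^2}.
\]
By Corollary \ref{cor:fullmetricbound}, the first three terms on the right are universally bounded, so the analysis reduces to understanding the last two terms.

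For the first inequality (\ref{eq:add80}), substitute $\tilde\eta_\delta/\tilde\eta = \beta \tilde\lambda_\delta/\tilde\lambda - \tilde F_\delta$ from $(\ast)$ into $\tilde\eta_\delta^2/\tilde\eta^2$ and expand. Writing $x := \tilde\lambda_\delta/\tilde\lambda$, the combination becomes
\[
\frac{\beta \tilde\lambda_\delta^{\,2}}{\tilde\lambda^2} - \frac{\tilde\eta_\delta^{\,2}}{\tilde\eta^2} \;=\; \beta(1-\beta)\, x^2 + 2\beta \tilde F_\delta\, x - \tilde F_\delta^{\,2},
\]
a quadratic in $x$ with positive leading coefficient $\beta(1-\beta) > 0$ (this is where $\beta \in (0,1)$ is crucial). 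Its minimum value is $-\tilde F_\delta^{\,2}/(1-\beta)$, universally bounded. This gives $L(u_{\delta\delta}) \geq C_1$ and completes (\ref{eq:add80}).

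For (\ref{eq:add81}), apply the master identity to each of the coordinate directions $\partial/\partial x^k$ for $k=1,\ldots,4$ and sum. Setting $X_k := \tilde\lambda_{x^k}/\tilde\lambda$ and $X_\delta := \tilde\lambda_\delta/\tilde\lambda = \sum a^k X_k$ (and similarly $Y_k, Y_\delta$), we obtain
\[
C_2 L(\Delta u) - L(u_{\delta\delta}) \;=\; [\text{universally bounded}] \;+\; C_2\beta \sum_k X_k^2 - C_2 \sum_k Y_k^2 - \beta X_\delta^{\,2} + Y_\delta^{\,2}.
\]
Using $(\ast)$ to eliminate every $Y$ in favor of the corresponding $X$ and $\tilde F$-derivative, and then invoking Cauchy--Schwarz in the form $X_\delta^{\,2} \leq \sum_k X_k^2$ (since $|a| \leq 1$), the quadratic content simplifies to
\[
(C_2 - 1)\,\beta(1-\beta) \sum_k X_k^2 \;+\; \text{(linear in $X_k$ with $\tilde F$-coefficients)},
\]
modulo universally bounded terms. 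An arithmetic--geometric mean inequality absorbs the linear cross terms into a fraction of the positive quadratic, provided $C_2$ is chosen sufficiently large in terms of $\beta$ and $\|\tilde F\|_{C^1}$. This produces the lower bound $C_3$ and establishes (\ref{eq:add81}).

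The principal obstacle is that the equation (\ref{eq:abTMA}) controls only the pure $(z\bz)$ and $(w\bw)$ components of the complex Hessian, not the off-diagonal real Hessian entries appearing in $u_{\delta\delta}$. Consequently, $L(u_{\delta\delta})$ cannot be bounded from above on its own: the term $\beta \tilde\lambda_\delta^{\,2}/\tilde\lambda^2$ is genuinely unbounded in terms of our a priori estimates. The role of $C_2 \Delta u$ is to supply a dominant positive quadratic contribution which, after a Cauchy--Schwarz comparison in a single direction, swallows the bad term. The positivity of the compensating quadratic hinges on the factor $\beta(1-\beta)$, so the argument degenerates precisely when $\beta = 1$, consistent with the paper's treatment of that case as essentially linear in Section 5.
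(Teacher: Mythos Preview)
Your proof is correct and follows essentially the same route as the paper: differentiate the logarithm of (\ref{eq:AlphaTMA2}) twice in a real direction, identify the key quadratic form $\beta(1-\beta)(\lambda_\delta/\lambda)^2$ whose positivity relies on $\beta\in(0,1)$, and for (\ref{eq:add81}) sum over coordinate directions and compare via $X_\delta^2\le\sum_k X_k^2$. The only organizational difference is that the paper first records the two-sided estimate $C_5+C_6|\lambda_\delta/\lambda|^2\le L(u_{\delta\delta})\le C_7+C_8|\lambda_\delta/\lambda|^2$ and then combines the upper bound for $u_{\delta\delta}$ with the lower bound for $\tilde\Delta u$, choosing $C_2=C_8/C_{10}$ explicitly, whereas you compute the combination $C_2L(\Delta u)-L(u_{\delta\delta})$ directly; both arrive at the same conclusion.
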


\begin{proof}
We apply $\delta$ to our PDE (\ref{eq:AlphaTMA2}) twice to obtain
\begin{align}
\beta\frac{\lambda_{\delta}}{\lambda}-\frac{\eta_{\delta}}{\eta} & =F_{\delta},\label{eq:add71}\\
\beta\frac{\lambda_{\delta\delta}}{\lambda}-\frac{\eta_{\delta\delta}}{\eta} & -\beta(\frac{\lambda_{\delta}}{\lambda})^{2}+(\frac{\eta_{\delta}}{\eta})^{2}=F_{\delta\delta},\label{eq:add72}
\end{align}
which lead to 
\begin{align}
\beta\frac{\delta^{2}\lambda}{\lambda}-\frac{\delta^{2}\eta}{\eta} & =\delta^{2}F+\beta(\frac{\delta\lambda}{\lambda})^{2}-(\beta\frac{\delta\lambda}{\lambda}-\delta F)^{2}.\label{eq:add73}\\
 & =F_{\delta\delta}+F_{\delta}^{2}+(\beta-\beta^{2})(\frac{\lambda_{\delta}}{\lambda})^{2}+2\beta\frac{\lambda_{\delta}}{\lambda}F_{\delta}.\nonumber 
\end{align}
On the other hand, using Theorems \ref{thm:lowerbound} \& \ref{thm:upperbound}
and Remark \ref{rem:local data}, we have 
\begin{equation}
\beta\frac{\delta^{2}\lambda}{\lambda}-\frac{\delta^{2}\eta}{\eta}=\frac{\beta}{\lambda}(1+\frac{u_{z\bar{z}}}{g})_{\delta\delta}-\frac{1}{\eta}(1+\frac{u_{w\bar{w}}}{g}))_{\delta\delta}=Lu+K_{1}(\frac{\lambda_{\delta}}{\lambda})+K_{2}\frac{\eta_{\delta}}{\eta}\label{eq:add74}
\end{equation}
where $K_{i},$$i=1,2,3$ are some universally bounded functions.
Note that for any $\epsilon>0$
\begin{equation}
|\frac{\lambda_{\delta}}{\lambda}F_{\delta}|\leq\epsilon|\lambda_{\delta}|^{2}+\frac{1}{2\epsilon}|F_{\delta}|^{2}\label{eq:add78}
\end{equation}
 and we use (\ref{eq:add73}) and (\ref{eq:add74}) to conclude
\begin{equation}
C_{5}+C_{6}|\frac{\lambda_{\delta}}{\lambda}|^{2}\leq L(u_{\delta\delta})\leq C_{7}+C_{8}|\frac{\lambda_{\delta}}{\lambda}|^{2},\label{eq:add75}
\end{equation}
where $C_{i}$ are universally bounded. In particular, since $\beta<1,$
we may pick $\epsilon=C_{4}>0$ in (\ref{eq:add78}) small enough
to ensure that $C_{6}>0$ and $C_{8}>0$. Therefore, we have proved
(\ref{eq:add80}), the first part of our claim.

To prove the second half of our claim, we consider $\tilde{\Delta}u$.
Apply \ref{eq:add75} to $\delta=\frac{\partial}{\partial x^{i}}$ repeatedly
and sum up resulting inequalities, we obtain 
\begin{equation}
C_{9}+C_{10}|\frac{\tilde{\nabla}\lambda}{\lambda}|^{2}\leq L(\tilde{\Delta}u)\leq C_{11}+C_{12}|\frac{\tilde{\nabla}\lambda}{\lambda}|^{2}\label{eq:add79}
\end{equation}
Again, we may have $C_{10}>0$ and $C_{12}>0$.
\[
\]

Now notice that by (\ref{eq:add41})
\begin{equation}
|\frac{\lambda_{\delta}}{\lambda}|^{2}\leq|\frac{\tilde{\nabla}\lambda}{\lambda}|^{2}.\label{eq:add83}
\end{equation}
We can then use (\ref{eq:add75}) and (\ref{eq:add79}) to get, for
any $C_{2}$
\[
L(C_{2}\tilde{\Delta}u-u_{\delta\delta})\geq C_{2}C_{10}(|\frac{\tilde{\nabla}\lambda}{\lambda}|^{2}+|\frac{\tilde{\nabla}\eta}{\eta}|^{2})-C_{8}(|\frac{\lambda_{\delta}}{\lambda}|^{2}+|\frac{\eta_{\delta}}{\eta}|^{2})+C_{11}.
\]
Finally, we may pick $C_{2}=C_{8}/C_{10}$ and use can apply (\ref{eq:add83})
to get (\ref{eq:add80}).
\end{proof}
Finally, we may establish the full $C^{2}$ estimate.
\begin{thm}
Notation as above. There exists a universal constant $C$ that depends
only on $\beta,F$ and $\omega_{0}$ such that we have the following
Hessian bound
\[
|\nabla^{2}u|\leq C.
\]
\end{thm}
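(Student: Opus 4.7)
The plan is to control $u_{\delta\delta}$ uniformly for every real unit vector $\delta\in\mathbb{R}^{4}$; bounding all such directional second derivatives controls every component of the real Hessian $\nabla^{2}u$. The tools needed are the two one-sided estimates from Lemma \ref{lem:subsolution}, the uniform ellipticity of $L$ from Corollary \ref{cor:fullmetricbound}, and the a priori $W^{2,p}$ bound from Proposition \ref{prop:W2p}.

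By Corollary \ref{cor:fullmetricbound}, in each coordinate chart of Lemma \ref{lem:localcoordinate} the operator $L$ has smooth coefficients whose ellipticity constants depend only on $\beta$, $F$, and $\omega_{0}$. By Proposition \ref{prop:W2p}, $u_{\delta\delta}\in L^{p}(M)$ uniformly for every $p>1$. Lemma \ref{lem:subsolution} asserts that for every unit $\delta$ both $u_{\delta\delta}$ and the auxiliary quantity $v_{\delta}:=C_{2}\tilde{\Delta}u-u_{\delta\delta}$ are subsolutions of $L$ with bounded right-hand side: $Lu_{\delta\delta}\geq C_{1}$ and $Lv_{\delta}\geq C_{3}$, with $C_{1},C_{2},C_{3}$ universal.

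I would then apply the Krylov--Safonov local maximum principle (or equivalently a Moser-type iteration) for subsolutions of uniformly elliptic non-divergence operators. Implemented through a finite atlas of coordinate charts with cutoff functions, this yields $\sup_{M}u_{\delta\delta}\leq C$ and $\sup_{M}v_{\delta}\leq C$, with $C$ depending only on $\beta$, $F$, $\omega_{0}$ and the already-established $L^{p}$ bound on the Hessian. The first estimate gives the required upper bound on $u_{\delta\delta}$. The second, combined with the fact that $\tilde{\Delta}u$ is uniformly bounded as an immediate consequence of Theorems \ref{thm:lowerbound} and \ref{thm:upperbound}, yields the matching lower bound $u_{\delta\delta}=C_{2}\tilde{\Delta}u-v_{\delta}\geq -C$. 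Taking the supremum over unit $\delta\in\mathbb{R}^{4}$ therefore controls every component of the real Hessian, producing $|\nabla^{2}u|\leq C$.

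The main obstacle in this plan is the passage from the uniform $W^{2,p}$ bound to a uniform $L^{\infty}$ bound on the Hessian; this is precisely the content of the local maximum principle step, and it relies essentially on the uniform ellipticity of $L$, which in turn rests on the pointwise bounds on $\lambda$ and $\eta$ from Theorems \ref{thm:lowerbound} and \ref{thm:upperbound}. Note that a more direct maximum principle applied to $u_{\delta\delta}$ alone fails, because Lemma \ref{lem:subsolution} provides only a lower bound on $Lu_{\delta\delta}$ (which is compatible with the sign required at an interior maximum on closed $M$); it is the combined use of both subsolution inequalities, together with the bound on $\tilde{\Delta}u$, that produces two-sided control.
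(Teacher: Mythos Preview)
Your proposal is correct and follows essentially the same approach as the paper: both arguments combine the subsolution inequalities of Lemma~\ref{lem:subsolution} for $u_{\delta\delta}$ and $C_{2}\tilde{\Delta}u-u_{\delta\delta}$ with the uniform ellipticity of $L$ (Corollary~\ref{cor:fullmetricbound}) and the $W^{2,p}$ bound (Proposition~\ref{prop:W2p}), then invoke the local maximum principle for subsolutions (the paper cites \cite[Theorem~4.8(2)]{gt}, which is exactly the Krylov--Safonov estimate you name) to upgrade to $L^{\infty}$ control, using the bound on $\tilde{\Delta}u$ to close the two-sided estimate. The only cosmetic difference is that the paper finishes by polarization on $\delta=\tfrac{1}{\sqrt{2}}(\partial_{x^{i}}+\partial_{x^{j}})$ rather than ranging over all unit $\delta$, which is equivalent.
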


\begin{proof}
We work in the open set $U$ defined as above. For $u_{\delta\delta}$
and $C_{2}\tilde{\Delta}u-u_{\delta\delta}$ defined as in Lemma \ref{lem:subsolution}.
By Corollary \ref{cor:fullmetricbound}, the linearlized operator
$L$ in (\ref{eq:add86}) is linear and uniformly elliptic. Therefore,
we may apply Proposition \ref{prop:W2p} and the local maximum principle
for subsolutions (Theorem 4.8 part 2 of \cite{gt}) to conclude that $u_{\delta\delta}\leq C$
and $C_{2}\tilde{\Delta}u-u_{\delta\delta}\leq C$ . Also note that
by (\ref{eq:add87}) and Corollary \ref{cor:fullmetricbound},$\tilde{\Delta}u\geq C.$
We then conclude that 
\[
|u_{\delta\delta}|\leq C.
\]
Finally, by considering $\delta=\frac{1}{\sqrt{2}}(\frac{\partial}{\partial x^{i}}+\frac{\partial}{\partial x^{i}}),$
we may get the desired bound for all mixed second derivatives. We
have finished the proof. 
\end{proof}

\subsection{Higher regularity}

Our previous method may be used to consider the higher regularity
estimate. However, an alternative is to apply the following Evans-Krylov
theorem for twisted type operators due to Collins \cite{collins:twistedtype}.
See also Streets-Warren \cite{sw:evanskrylov} for a related result.
\begin{defn}
On a Riemannian manifold, an elliptic operator $\Psi=\Psi_{\cup}+\Psi_{\cap}$
is said to be of \emph{twisted type} if 
\begin{enumerate}
\item $\Psi_{\cup}$ is uniformly elliptic and convex, and
\item $\Psi_{\cap}$ is degenerate elliptic and concave.
\end{enumerate}
This definition is not as general as that found in Collins' paper,
but will suffice for our purposes.
\end{defn}

\begin{lem}
Notation as above. There exists $\epsilon>0$ sufficiently small so
that
\[
\Psi=(\beta\log\frac{\omega_{u}^{+}}{\omega_{0}^{+}}-\epsilon\frac{\omega_{u}^{-}}{\omega_{0}^{-}})+(\epsilon\frac{\omega_{u}^{-}}{\omega_{0}^{-}}-\log\frac{\omega_{u}^{-}}{\omega_{0}^{-}})
\]
is elliptic and, when split as indicated by the parentheses, is of
twisted type.
\end{lem}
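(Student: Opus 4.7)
The plan is to reduce everything to a direct computation of first and second derivatives in the local split coordinates supplied by Lemma~\ref{lem:localcoordinate}. In such a chart I would write $\omega_0 = \sqrt{-1}(g\,dz\wedge d\bar z + h\,dw\wedge d\bar w)$, so that $\lambda = 1 + u_{z\bar z}/g$ and $\eta = 1 - u_{w\bar w}/h$. Both summands, and $\Psi$ itself, then become functions of the two linear Hessian quantities $r := u_{z\bar z}$ and $s := u_{w\bar w}$, which are in turn linear functionals of the full real Hessian $D^2 u$. This observation reduces ellipticity and convexity/concavity of each piece to checking signs of first and second partials in $(r,s)$, with the uniform metric equivalence $C^{-1}\omega_0\le\omega_u\le C\omega_0$ of Corollary~\ref{cor:fullmetricbound} providing all the quantitative bounds one needs.

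For the first summand the derivatives compute to $\partial_r\Psi_\cup = \beta/(g\lambda)$ and $\partial_s\Psi_\cup = \epsilon/h$, both strictly positive. By Corollary~\ref{cor:fullmetricbound} we have $C^{-1}\le\lambda,\eta\le C$, so these partials are bounded above and below by positive constants depending only on $\beta$, $\|F\|_{C^2}$, and $\omega_0$; hence $\Psi_\cup$ is uniformly elliptic. Its $(r,s)$-Hessian is
\begin{equation*}
D^2_{(r,s)}\Psi_\cup = \begin{pmatrix} -\beta/(g^2\lambda^2) & 0 \\ 0 & 0 \end{pmatrix},
\end{equation*}
which is sign-definite as required. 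For the second summand I would compute $\partial_r\Psi_\cap = 0$ and $\partial_s\Psi_\cap = (1-\epsilon\eta)/(h\eta)$; the former shows degeneracy in the $r$-direction, while the latter is strictly positive as soon as $\epsilon < 1/\sup_M\eta$. The associated Hessian is
\begin{equation*}
D^2_{(r,s)}\Psi_\cap = \begin{pmatrix} 0 & 0 \\ 0 & 1/(h^2\eta^2) \end{pmatrix},
\end{equation*}
sign-definite of the opposite sign to that of $\Psi_\cup$, which together completes the twisted-type decomposition.

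The only quantitative choice is the threshold on $\epsilon$: it must be small enough that $\partial_s\Psi_\cap$ stays uniformly positive, i.e. $\epsilon < 1/\sup_M\eta$. Since the upper bound on $\eta$ from Corollary~\ref{cor:fullmetricbound} depends only on $\beta$, $\|F\|_{C^2}$, and $\omega_0$, any sufficiently small such $\epsilon$ may be fixed once and for all, independently of $u$. I anticipate no substantive analytic obstacle; the entire verification is bookkeeping of signs, with the previously established two-sided bounds on $\lambda$ and $\eta$ absorbing the work.
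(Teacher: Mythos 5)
Your proposal is correct and takes essentially the same approach as the paper's own proof: both reduce to first and second variations of $\Psi_\cup$ and $\Psi_\cap$ through the scalar quantities $\lambda,\eta$ (you parameterize by $r=u_{z\bar z}$, $s=u_{w\bar w}$; the paper uses the $\delta(\cdot)$ calculus), and both invoke Corollary~\ref{cor:fullmetricbound} to get uniform ellipticity of $\Psi_\cup$ and to pick $\epsilon$ small enough so that $\tfrac{1}{\eta}-\epsilon\ge 0$. One small caution worth noting, which applies equally to the paper's own write-up: your $(r,s)$-Hessians exhibit $\Psi_\cup$ as \emph{concave} (negative semi-definite) and $\Psi_\cap$ as \emph{convex} (positive semi-definite), which is the reverse of the literal wording in the preceding definition of twisted type; your phrase ``sign-definite as required'' glosses over this in the same way the paper's ``appropriate concavity properties'' does, and one should either match Collins' sign convention explicitly or apply the theorem to $-\Psi$.
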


\begin{proof}
Letting $\Psi_{\cup}$ and $\Psi_{\cap}$ be defined as follows. 
\[
\Psi_{\cup}=\beta\log\frac{\omega_{u}^{+}}{\omega_{0}^{+}}-\epsilon\frac{\omega_{u}^{-}}{\omega_{0}^{-}},\quad\Psi_{\cap}=\epsilon\frac{\omega_{u}^{-}}{\omega_{0}^{-}}-\log\frac{\omega_{u}^{-}}{\omega_{0}^{-}}
\]
Notice that corresponding linearized operators are 
\[
\delta\Psi_{\cup}(\delta u)=\frac{\beta}{\lambda}\frac{\i\del_{+}\delb_{+}\delta u}{\omega_{0}^{+}}+\epsilon\frac{\i\del_{-}\delb_{-}\delta u}{\omega_{0}^{-}},\quad\delta\Psi_{\cap}(\delta u)=(\frac{1}{\eta}-\epsilon)\frac{\i\del_{-}\delb_{-}\delta u}{\omega_{0}^{-}}.
\]
By Corollary \ref{cor:fullmetricbound}, $\Psi_{\cup}$ is uniformly
elliptic and $\epsilon>0$ can be chosen sufficiently small so that
$\Psi_{\cap}$ is degenerate elliptic. Then since 
\[
\delta (\frac{1}{\lambda})=-\frac{1}{\lambda^{2}}\frac{\i\del_{+}\delb_{+}\delta u}{\omega_{0}^{+}},\quad \delta(\frac{1}{\eta})=\frac{1}{\eta^{2}}\frac{\i\del_{-}\delb_{-}\delta u}{\omega_{0}^{-}},
\]
it is the case that $\Psi_{\cup}$ and $\Psi_{\cap}$ have the appropriate
concavity properties.
\end{proof}
\begin{thm}
\label{thm:collins}\cite[Theorem 3.2]{collins:twistedtype} Suppose
that $u\in C^{\infty}(B_2)$ on $B_{2}\subset\mathbb{R}^{n}$where $F=F_{\cup}+F_{\cap}$.
Let $\mathcal{U}=D^{2}u(\bar{B}_{1})$ and let $\mathcal{V}\supset\mathcal{U}$be
an open and convex set. Suppose that $F_{\cup}$ is uniformly elliptic,
convex and $C^{2}$ on $\mathcal{V}$, and $F_{\cap}$ is $C^{2}$
on $\mathcal{V}$. Assume furthermore that $F_{\cap}$ is degenerate
elliptic and concave on $\mathcal{U}$. Then, for every $\gamma\in(0,1)$
we have an estimate 
\[
\|D^{2}u\|_{C^{\gamma}(B_{\frac{1}{2}})}\leq C(n,\lambda,\Lambda,\gamma,\Gamma,F,\|D^{2}u\|_{\infty})
\]
where 
\[
\Gamma=\mathrm{osc}_{B_{1}}(-F_{\cup}(D^{2}u))
\]
depend only on $\Lambda$ and $\|D^{2}u\|_{\infty}$.
\end{thm}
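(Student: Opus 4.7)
The plan is to establish Theorem \ref{thm:collins} as an interior $C^{2,\gamma}$ regularity result for the twisted operator $F = F_\cup + F_\cap$, adapting Caffarelli's $C^{2,\gamma}$ estimate for uniformly elliptic convex equations to the split setting. By a standard scaling and covering reduction, it suffices to exhibit geometric oscillation decay of $D^2 u$ on dyadic balls: for every unit direction $e$ and every $r<1/2$, one has $\mathrm{osc}_{B_r} u_{ee}\leq Cr^{\gamma}$ for some $\gamma\in(0,1)$ and some $C$ depending only on the data listed in the hypothesis.

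The core move is to recast the equation in the convex form
\[
F_\cup(D^2u) = F(D^2u) - F_\cap(D^2 u) =: g(x),
\]
so that $u$ solves a convex uniformly elliptic equation with right-hand side depending on the unknown Hessian through $F_\cap$. If $g$ were a priori H\"older continuous, Caffarelli's theorem for convex uniformly elliptic equations with H\"older source would deliver the claimed estimate. The task is thus to bootstrap the regularity of $g$ from only the standing bound $\|D^2 u\|_\infty <\infty$.

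I would implement this bootstrap through a Campanato-type iteration. At each scale $r$ centered at $x_0$, freeze the right-hand side as the constant $g(x_0)$ and compare $u$ to the Caffarelli-approximating quadratic polynomial $v$ solving the frozen convex problem $F_\cup(D^2 v) = g(x_0)$. Caffarelli's estimate furnishes a quadratic approximation error of order $r^{2+\gamma_0}$ for some universal $\gamma_0>0$. Since $F_\cap$ is $C^2$ on $\mathcal{V}$, the oscillation $\mathrm{osc}_{B_r} F_\cap(D^2 u)$ is controlled linearly by $\mathrm{osc}_{B_r} D^2 u$, which provides the input oscillation for the next scale. Iterating along dyadic scales produces geometric decay of $\mathrm{osc}_{B_r} D^2 u$ and hence the $C^{2,\gamma}$ estimate.

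The main obstacle is closing this feedback loop uniformly in the scale. The hypothesis that $F_\cap$ is only degenerate elliptic, rather than uniformly elliptic, plays an essential role here: it guarantees that the effective ellipticity constants of the linearization $F_\cup^{ij}+F_\cap^{ij}$ are controlled entirely by those of $F_\cup$, so that Caffarelli's universal constant in each iteration step remains uniform and independent of how close to degenerate the $F_\cap$ contribution gets. The delicate quantitative balance is to choose the scaling parameter so that the Caffarelli contraction rate strictly beats the Lipschitz feedback from $F_\cap$, yielding a positive decay exponent $\gamma$; this is precisely where the twisted structure (convex uniformly elliptic $+$ concave degenerate) is used in a non-trivial way, and I expect it to be the genuinely hard step.
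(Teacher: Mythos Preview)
The paper does not prove Theorem~\ref{thm:collins}; it is quoted verbatim from Collins~\cite[Theorem~3.2]{collins:twistedtype} and used as a black box to pass from the full $C^{2}$ estimate to $C^{2,\gamma}$ regularity. So there is nothing in the paper to compare your proposal against.

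That said, your sketch is a reasonable outline of how Collins's result is proved. The basic idea---rewrite the equation as $F_\cup(D^2u)=g$ with $g=-F_\cap(D^2u)+F(D^2u)$ and feed the degenerate concave part back as a right-hand side whose oscillation is controlled by the Hessian oscillation---is correct in spirit. One point worth sharpening: the concavity of $F_\cap$ (not just its $C^2$ smoothness) is what makes the iteration close. Concavity plus degenerate ellipticity of $F_\cap$ means that $-F_\cap(D^2u)$ is a \emph{subsolution-type} quantity, so one obtains a one-sided Harnack/weak-Harnack improvement at each scale rather than merely a Lipschitz feedback; this is how Collins beats the trivial bound and gets genuine H\"older decay. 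Your description ``oscillation of $F_\cap(D^2u)$ is controlled linearly by oscillation of $D^2u$'' is true but would by itself only give a non-contracting iteration---the concavity is doing real work, not just the $C^2$ bound.
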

This theorem in hand, we are able to bootstrap to obtain estimates for $u\in C^k(M)$ for all $k$.

\section{Continuity Method}

In this section, we run the continuity method to solve (\ref{eq:add0})
when $\beta<1$ and $\alpha=1$. As $u\equiv0$ and $\xi =0$ is a solution for $F\equiv 0$,
we seek to apply the continuity method to the path $F_{t}=tF$ with
$F\in C^{\infty}(M)$ for $t\in[0,1]$. We consider the following
PDE
\begin{equation}
\begin{cases}
(\frac{\omega_{u_{t}}^{+}}{\omega_{0}^{+}})^{\beta}=e^{F_{t}+\xi_{t}}(\frac{\omega_{u_{t}}^{-}}{\omega_{0}^{-}})\\
\omega_{u}=\omega_{0}+\Box u>0
\end{cases}.\label{eq:new}
\end{equation}
Let
\begin{equation}
S=\{t\in[0,1]\:|\:\exists(u_{t},\xi_{t})\in\mathcal{\mathcal{A}}(\omega_{0})\times\mathbb{R}\text{ solving (\ref{eq:new})}\}.\label{eq:setS}
\end{equation}
Since $S$ is non-empty, it is sufficient to show that
$S$ is both open and closed in $[0,1]$.

\subsection{Openness}

The proof of openness follows a standard Inverse Function Theorem
argument.
\begin{thm}
On $(M^{2},I)$ a compact, complex surface with Hermitian metric $\omega_{0}$
and $\beta\in\mathbb{R}$, the following map 
\begin{align*}
\Psi:\mathcal{A}(\omega_{0})\times\mathbb{R} & \to C^{\gamma}(M)\\
(u,\xi) & \mapsto\beta\log\frac{\omega_{u}^{+}}{\omega_{0}^{+}}-\log\frac{\omega_{u}^{-}}{\omega_{0}^{-}}-\xi
\end{align*}
 is locally invertible.
\end{thm}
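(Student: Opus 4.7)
The plan is to apply the Inverse Function Theorem for Banach spaces, once an inherent gauge redundancy has been removed. Since $\Box$ annihilates constants, $\Psi(u+c,\xi)=\Psi(u,\xi)$, so $\Psi$ cannot be literally injective on $\mathcal{A}(\omega_{0})\times\mathbb{R}$; I would therefore pass to Hölder regularity and work on the slice
\[
X_{\gamma}:=\Bigl\{u\in C^{2,\gamma}(M):\ \omega_{0}+\Box u>0,\ \int_{M}u\,\omega_{0}^{2}=0\Bigr\}\times\mathbb{R},
\]
viewed as an open subset of a Banach space, and regard $\Psi:X_{\gamma}\to C^{\gamma}(M)$. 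Smoothness of $\Psi$ is clear since $u\mapsto\omega_{u}$ is affine and $\omega\mapsto\log(\omega^{\pm}/\omega_{0}^{\pm})$ is smooth on the positive cone.

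The first step is to compute the differential of $\Psi$ at $(u_{0},\xi_{0})\in X_{\gamma}$. From $\delta\omega_{u}^{+}=\i\del_{+}\delb_{+}\delta u$ and $\delta\omega_{u}^{-}=-\i\del_{-}\delb_{-}\delta u$ a direct calculation yields
\[
d\Psi_{(u_{0},\xi_{0})}(\delta u,\delta\xi)=L(\delta u)-\delta\xi,
\]
where $L$ is the linearized operator of (\ref{eq:LinearizedAlphaTMA}) evaluated at $\omega_{u_{0}}$. The key algebraic observation is that, for $\beta>0$, $L$ is exactly twice the Chern Laplacian of the positive Hermitian metric $\tilde\omega:=\omega_{u_{0}}^{+}+\beta^{-1}\omega_{u_{0}}^{-}$. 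This is a one-line computation in the split-adapted local coordinates of Lemma \ref{lem:localcoordinate}: mixed terms $\i\del_{+}\delb_{-}\phi$ and $\i\del_{-}\delb_{+}\phi$ wedge to zero against $\tilde\omega$ for dimensional reasons, so
\[
\i\del\delb\phi\wedge\tilde\omega=\beta^{-1}\,\i\del_{+}\delb_{+}\phi\wedge\omega_{u_{0}}^{-}+\i\del_{-}\delb_{-}\phi\wedge\omega_{u_{0}}^{+},\qquad\tilde\omega^{2}=2\beta^{-1}\,\omega_{u_{0}}^{+}\wedge\omega_{u_{0}}^{-},
\]
and dividing gives $L=2\Delta_{\tilde\omega}$. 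In particular $L$ is uniformly elliptic.

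The next step is to show that $(\delta u,\delta\xi)\mapsto L(\delta u)-\delta\xi$ is an isomorphism from $\{v\in C^{2,\gamma}(M):\int_{M}v\,\omega_{0}^{2}=0\}\times\mathbb{R}$ onto $C^{\gamma}(M)$. Let $e^{f}$ be the Gauduchon factor of $\tilde\omega$ supplied by Theorem \ref{thm:Gauduchonmetric}. By the Fredholm alternative of Theorem \ref{thm:19}, for given $w\in C^{\gamma}(M)$ the equation $L(\delta u)=w+\delta\xi$ is solvable in $C^{2,\gamma}$ if and only if $\int_{M}(w+\delta\xi)e^{f}\tilde\omega^{2}=0$; this pins down $\delta\xi$ uniquely, after which $\delta u$ is determined up to an additive constant fixed by the normalization. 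Injectivity is the same obstruction in reverse: if $L(\delta u)=\delta\xi=\mathrm{const}$, then $\delta\xi\int_{M}e^{f}\tilde\omega^{2}=0$ forces $\delta\xi=0$, so $L(\delta u)=0$, and then $\delta u$ is constant, hence zero by the normalization.

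With the linearization an isomorphism, the Inverse Function Theorem produces a neighborhood of $(u_{0},\xi_{0})$ on which $\Psi$ is a $C^{1}$-diffeomorphism onto a neighborhood of $\Psi(u_{0},\xi_{0})$ in $C^{\gamma}(M)$, which is exactly the openness statement needed for the continuity argument. I expect the main subtlety to be not any single analytic step but the clean bookkeeping around the gauge $u\mapsto u+c$ at the outset; once this is isolated, positivity of $\tilde\omega$ is automatic from $\beta>0$ and $\omega_{u_{0}}>0$, and the argument reduces to the Gauduchon--Fredholm theory of Chern Laplacians already developed in the paper.
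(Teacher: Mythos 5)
Your proof follows the same basic route as the paper: identify the linearization $L(\delta u)-\delta\xi$ with (a multiple of) a Chern Laplacian of an auxiliary positive Hermitian metric, pass to its Gauduchon representative, and invoke the Fredholm alternative (Theorem \ref{thm:19}) together with the Inverse Function Theorem. Where you genuinely improve on the paper is the treatment of the constant-gauge freedom: since $\Box$ kills constants, $\Psi(u+c,\xi)=\Psi(u,\xi)$, so the full linearization on $\mathcal{A}(\omega_0)\times\mathbb{R}$ has a one-dimensional kernel; the paper's injectivity paragraph jumps from $\Delta_{\tilde\omega}\delta u=0$ to $\delta u=0$ without stating a normalization, whereas your restriction to the zero-mean slice $\int_M u\,\omega_0^2=0$ makes this honest. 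That is exactly the right fix and it yields the local surjectivity needed for openness of $S$.

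However, there is a concrete computational error in the auxiliary metric. With $L\phi=\beta\,\frac{\i\del_+\delb_+\phi}{\omega_{u_0}^+}+\frac{\i\del_-\delb_-\phi}{\omega_{u_0}^-}$ (which you correctly cite from (\ref{eq:LinearizedAlphaTMA})), the metric for which $L=2\Delta_{\tilde\omega}$ must carry a weight $\beta$ on the \emph{minus} factor, not the plus factor. Indeed, writing $\tilde\omega=A\omega_{u_0}^++B\omega_{u_0}^-$, one has $\tilde\omega^2=2AB\,\omega_{u_0}^+\wedge\omega_{u_0}^-$ and $\i\del\delb\phi\wedge\tilde\omega=B\,\i\del_+\delb_+\phi\wedge\omega_{u_0}^-+A\,\i\del_-\delb_-\phi\wedge\omega_{u_0}^+$, so
\[
2\Delta_{\tilde\omega}\phi=\frac{1}{A}\,\frac{\i\del_+\delb_+\phi}{\omega_{u_0}^+}+\frac{1}{B}\,\frac{\i\del_-\delb_-\phi}{\omega_{u_0}^-},
\]
and matching to $L$ requires $A=\beta^{-1}$, $B=1$ (equivalently $\tilde\omega=\omega_{u_0}^++\beta\,\omega_{u_0}^-$ up to scale), which is the paper's choice. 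Your $\tilde\omega=\omega_{u_0}^++\beta^{-1}\omega_{u_0}^-$ instead gives $2\Delta_{\tilde\omega}\phi=\frac{\i\del_+\delb_+\phi}{\omega_{u_0}^+}+\beta\,\frac{\i\del_-\delb_-\phi}{\omega_{u_0}^-}$, which has the $\beta$ in the wrong slot and is not proportional to $L$ unless $\beta=1$. The downstream Gauduchon/Fredholm argument is structurally unaffected once $\tilde\omega$ is corrected (any positive Hermitian metric would supply a Gauduchon factor and a Fredholm alternative), but the stated identity $L=2\Delta_{\tilde\omega}$ is false as written, and it is the hinge on which the rest of the paragraph turns, so it should be fixed.
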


\begin{proof}
We compute the linearized operator at a solution $(u,\xi)$. We use
Theorem \ref{thm:Gauduchonmetric} to obtain a Gauduchon factor for
$\frac{1}{\beta}\omega_{u}^{+}+\omega_{u}^{-}$ and denote it by
$e^{f}$ , such that $\tilde{\omega}=e^{f}(\frac{1}{\beta}\omega_{u}^{+}+\omega_{u}^{-})$
is pluriclosed. Let $(\delta u,\delta \xi)\in C^{4}(M)\times\mathbb{R}\cong T_{(u,\xi)}(\mathcal{A}(\omega)\times\mathbb{R})$.
Define the tangent map
\[
\delta\Psi|{}_{(u,\xi)}(\delta u,\delta \xi)=e^{f}\Delta_{\tilde{\omega}}\delta u-\delta \xi.
\]
By the Inverse Function Theorem for Banach Spaces \cite[Theorem 17.6]{gt}, it is sufficient
to show that $\delta\Psi|_{(u,\xi)}:C^{\gamma}(M)\times\mathbb{R}\to T_{\Psi(u,\xi)}C^{\gamma}(M)$
is an isomorphism. 

We start with injectivity. Suppose that $\delta\Psi|_{(u,\xi)}(\delta u,\delta \xi)=0$.
Then, by Theorem \ref{thm:19}, the Chern-Poisson equation is uniquely
solvable if and only if 
\[
0=\int_{M} (\delta \xi) e^{-f}\tilde{\omega}^{2},
\]
which shows that $\delta \xi=0$. Therefore, $\Delta_{\tilde{\omega}}\delta u=0.$
By the maximum principle and (\ref{eq:new}), $\delta u=0$. Thus,
the kernel of $\delta\Psi|_{(u,\xi)}$ is trivial.

On the other hand, for any $\delta F\in C^{\gamma}(M)$, we use Theorem
\ref{thm:19} to obtain $(\delta u,\delta \xi)\in T_{(u,\xi)}\mathcal{A}(\omega)$
to satisfy the following
\begin{equation}
\delta \xi=-\frac{\int_{M}\delta Fe^{f}\omega_{u}^{2}}{\int_{M}e^{f}\omega_{u}^{2}},\quad\Delta_{\tilde{\omega}}\delta u=e^{-f}(\delta F+\delta \xi),\label{eq:sujectivity}
\end{equation}
which implies surjectivity. 
\end{proof}

\subsection{Closedness}

Finally, we prove the closedness of the set $S,$ which completes the proof
of Theorem \ref{thm:main} in the case $0<\beta<1.$ The closedness
of $S$ is a direct consequence of \emph{a priori} estimates: Theorems
\ref{thm:L1apriori} and \ref{thm:oscbyL1}, Corollary \ref{cor:fullmetricbound},
and Theorem \ref{thm:collins}. Considering Theorem \ref{t:TMAsolved}
which is the case for $\beta=1$ and using a symmetry argument to
treat the $\beta>1$ case, we have therefore established Theorem
\ref{thm:main}.

\bibliographystyle{abbrvnat}
%\bibliography{mybib}

\begin{thebibliography}{47}
\providecommand{\natexlab}[1]{#1}
\providecommand{\url}[1]{\texttt{#1}}
\expandafter\ifx\csname urlstyle\endcsname\relax
  \providecommand{\doi}[1]{doi: #1}\else
  \providecommand{\doi}{doi: \begingroup \urlstyle{rm}\Url}\fi

\bibitem[Alesker and Shelukhin(2013)]{Alesker2013309}
S.~Alesker and E.~Shelukhin.
\newblock On a uniform estimate for the quaternionic {C}alabi problem.
\newblock \emph{Israel J. of Math.}, 197\penalty0 (1):\penalty0 309 – 327, 2013.
\newblock \doi{10.1007/s11856-013-0003-1}.

\bibitem[Angella(2015)]{angella2015bottchern}
D.~Angella.
\newblock On the {B}ott-{C}hern and {A}eppli cohomology, 2015.

\bibitem[Angella and Tosatti(2021)]{Angella2021LeafwiseFF}
D.~Angella and V.~Tosatti.
\newblock Leafwise flat forms on {I}noue-{B}ombieri surfaces.
\newblock \emph{J. Funct. Anal.}, 2021.
\newblock URL \url{https://api.semanticscholar.org/CorpusID:235683210}.

\bibitem[Angella et~al.(2014)Angella, Dloussky, and Tomassini]{Angella2014OnBC}
D.~Angella, G.~Dloussky, and A.~Tomassini.
\newblock On {B}ott-{C}hern cohomology of compact complex surfaces.
\newblock \emph{Ann. Mat. Pura Appl. (1923 -)}, 195:\penalty0 199--217, 2014.
\newblock URL \url{https://api.semanticscholar.org/CorpusID:119171255}.

\bibitem[Apostolov and Dloussky(2007)]{apostolovanddloussky}
V.~Apostolov and G.~Dloussky.
\newblock Bihermitian metrics on hopf surfaces.
\newblock \emph{Mathematical Research Letters}, 15, 11 2007.
\newblock \doi{10.4310/MRL.2008.v15.n5.a1}.

\bibitem[Apostolov and Gualtieri(2007)]{ag:gkwithsplit}
V.~Apostolov and M.~Gualtieri.
\newblock Generalized {K}\"ahler manifolds, commuting complex structures, and split tangent bundles.
\newblock \emph{Comm. Math. Phys.}, 271:\penalty0 561 -- 575, 2007.

\bibitem[Apostolov and Streets(2017)]{Apostolov2017TheNG}
V.~Apostolov and J.~Streets.
\newblock The nondegenerate generalized {K}{\"a}hler {C}alabi–{Y}au problem.
\newblock \emph{J. Reine Angew. Math.}, 2021:\penalty0 1 -- 48, 2017.
\newblock URL \url{https://api.semanticscholar.org/CorpusID:119150656}.

\bibitem[Apostolov et~al.(2022)Apostolov, Fu, Streets, and Ustinovskiy]{apostolov2022generalized}
V.~Apostolov, X.~Fu, J.~Streets, and Y.~Ustinovskiy.
\newblock The generalized {K}\"ahler {C}alabi-{Y}au problem.
\newblock 2022.

\bibitem[Beauville(1998)]{Beauville}
A.~Beauville.
\newblock Complex manifolds with split tangent bundle.
\newblock \emph{arXiv: Algebraic Geometry}, 1998.
\newblock URL \url{https://api.semanticscholar.org/CorpusID:14042574}.

\bibitem[Bismut(1989)]{bismut:localindexthm}
J.-M. Bismut.
\newblock A local index theorem for non-{K}\"ahler manifolds.
\newblock \emph{Math. Ann.}, 284:\penalty0 681 -- 699, 1989.

\bibitem[Bismut(2013)]{bismut}
J.-M. Bismut.
\newblock \emph{Hypoelliptic {L}aplacian and {B}ott-{C}hern cohomology: A theorem of {R}iemann-{R}och-{G}rothendieck in complex geometry}.
\newblock Birkhauser/Springer, 2013.

\bibitem[Chu et~al.(2019)Chu, Tosatti, and Weinkove]{CTW}
J.~Chu, V.~Tosatti, and B.~Weinkove.
\newblock The {M}onge-{A}mp\`ere equation for non-integrable almost complex structures.
\newblock \emph{J. Eur. Math. Soc}, 21:\penalty0 1949--1984, 2019.
\newblock URL \url{https://doi.org/10.4171/JEMS/878}.

\bibitem[Collins(2016)]{collins:twistedtype}
T.~Collins.
\newblock {$C^{2,\alpha}$} estimates for nonlinear elliptic equations of twisted type.
\newblock \emph{Calc. Var. Partial Differential Equations}, 55\penalty0 (1), 2016.

\bibitem[Garcia-Fernandez and Streets(2021)]{gfs}
M.~Garcia-Fernandez and J.~Streets.
\newblock \emph{Generalized {R}icci flow}.
\newblock Am. Math. Soc., 2021.

\bibitem[Garcia-Fernandez et~al.(2023)Garcia-Fernandez, Jordan, and Streets]{gjs:nonkahler}
M.~Garcia-Fernandez, J.~Jordan, and J.~Streets.
\newblock Non-{K}{\"a}hler {C}alabi-{Y}au geometry and pluriclosed flow.
\newblock \emph{J. Math. Pures Appl.}, 177:\penalty0 329--367, 2023.
\newblock ISSN 0021-7824.
\newblock \doi{https://doi.org/10.1016/j.matpur.2023.07.002}.
\newblock URL \url{https://www.sciencedirect.com/science/article/pii/S0021782423000971}.

\bibitem[Gates et~al.(1978)Gates, Hull, and Ro\v{c}ek]{ghr:twistedmultiplet}
S.~Gates, C.~Hull, and M.~Ro\v{c}ek.
\newblock Twisted multiplets and new supersymmetric non-linear $\sigma$-models.
\newblock \emph{Comm. Pure Appl. Math.}, 31\penalty0 (3):\penalty0 339 -- 411, 1978.

\bibitem[Gauduchon(1977)]{gauduchon:nulle}
P.~Gauduchon.
\newblock Le th\`eor\`eme de l'exentricit\'e nulle.
\newblock \emph{Compt. Rend. Acad. Sci. Paris}, 285:\penalty0 387 -- 390, 1977.

\bibitem[Gauduchon and Ornea(1998)]{Gauduchon-Ornea}
P.~Gauduchon and L.~Ornea.
\newblock Locally conformally {K\"ahler} metrics on {Hopf} surfaces.
\newblock \emph{Ann. Inst. Fourier}, 48\penalty0 (4):\penalty0 1107--1127, 1998.
\newblock \doi{10.5802/aif.1651}.
\newblock URL \url{http://www.numdam.org/articles/10.5802/aif.1651/}.

\bibitem[Gilbarg and Trudinger(2001)]{gt}
D.~Gilbarg and N.~Trudinger.
\newblock \emph{Elliptic partial differential equations of second order}.
\newblock Springer, 2001.

\bibitem[Gualtieri(2011)]{g:gencompgeo}
M.~Gualtieri.
\newblock Generalized complex geometry.
\newblock \emph{Math. Ann.}, 174\penalty0 (1):\penalty0 75 -- 123, 2011.

\bibitem[Guan and Li(2010)]{guan2009complex}
B.~Guan and Q.~Li.
\newblock Complex {M}onge-{A}mp\`ere equations and totally real submanifolds.
\newblock \emph{Adv. Math.}, 225\penalty0 (3):\penalty0 1185--1223, 2010.
\newblock ISSN 0001-8708,1090-2082.
\newblock \doi{10.1016/j.aim.2010.03.019}.
\newblock URL \url{https://doi.org/10.1016/j.aim.2010.03.019}.

\bibitem[Hitchin(2003)]{hitchin:genCY}
N.~Hitchin.
\newblock Generalized {C}alabi-{Y}au manifolds.
\newblock \emph{Quart. J. Math.}, 54\penalty0 (3):\penalty0 281 -- 308, 2003.

\bibitem[Hull et~al.(2010)Hull, Lindstr{\"o}m, Ro\v{c}ek, von Unge, and Zabzine]{Hull2010GeneralizedCM}
C.~M. Hull, U.~Lindstr{\"o}m, M.~Ro\v{c}ek, R.~von Unge, and M.~Zabzine.
\newblock Generalized {C}alabi-{Y}au metric and generalized {M}onge-{A}mp{\`e}re equation.
\newblock \emph{J. High Energy Phys.}, 2010:\penalty0 1--23, 2010.
\newblock URL \url{https://api.semanticscholar.org/CorpusID:119636765}.

\bibitem[Ivanov and Papadopoulos(2001)]{IvanovPapa}
S.~Ivanov and G.~Papadopoulos.
\newblock Vanishing theorems and string backgrounds.
\newblock \emph{Classical Quantum Gravity}, 18\penalty0 (6):\penalty0 1089--1110, 2001.
\newblock ISSN 0264-9381.
\newblock URL \url{https://doi.org/10.1088/0264-9381/18/6/309}.

\bibitem[Jordan and Streets(2020)]{js:c3}
J.~Jordan and J.~Streets.
\newblock On a {C}alabi-type estimate for pluriclosed flow.
\newblock \emph{Adv. in Math.}, 366, 2020.

\bibitem[Kodaira(1975)]{Kodaira+1975+1471+1510}
K.~Kodaira.
\newblock \emph{On The Structure Of Compact Complex Analytic Surfaces, II}, pages 1471--1510.
\newblock Princeton University Press, Princeton, 1975.
\newblock ISBN 9781400869879.
\newblock \doi{doi:10.1515/9781400869879-013}.
\newblock URL \url{https://doi.org/10.1515/9781400869879-013}.

\bibitem[Mooney and Savin(2019)]{Mooney-Savin}
C.~Mooney and O.~Savin.
\newblock Regularity results for the equation $ u_{11}u_{22} = 1 $.
\newblock \emph{Discrete Contin. Dyn. Syst.}, 39\penalty0 (12):\penalty0 6865--6876, 2019.
\newblock ISSN 1078-0947.
\newblock \doi{10.3934/dcds.2019235}.
\newblock URL \url{https://www.aimsciences.org/article/id/30732454-76bd-4748-9b31-1c991c4671e1}.

\bibitem[Schweitzer(2007)]{Schweitzer2007AutourDL}
M.~Schweitzer.
\newblock Autour de la cohomologie de bott-chern.
\newblock \emph{arXiv: Algebraic Geometry}, 2007.
\newblock URL \url{https://api.semanticscholar.org/CorpusID:115165822}.

\bibitem[Shen and Smith(2022)]{Shen2022TheCE}
X.~S. Shen and K.~Smith.
\newblock The continuity equation on {H}opf and {I}noue surfaces.
\newblock \emph{Int. Math. Res. Not. IMRN}, 2022.
\newblock URL \url{https://api.semanticscholar.org/CorpusID:252199966}.

\bibitem[Streets(2014)]{Streets2014PluriclosedFO}
J.~Streets.
\newblock Pluriclosed flow on generalized {K}{\"a}hler manifolds with split tangent bundle.
\newblock \emph{J. Reine Angew. Math.}, 2014.
\newblock URL \url{https://api.semanticscholar.org/CorpusID:119612427}.

\bibitem[Streets(2016{\natexlab{a}})]{s:borninfeld}
J.~Streets.
\newblock Pluriclosed flow, {B}orn-{I}nfeld geometry, and rigidity results for generalized {K}{\"a}hler manifolds.
\newblock \emph{Comm. Partial Differential Equations}, 41\penalty0 (2):\penalty0 318--374, 2016{\natexlab{a}}.
\newblock \doi{10.1080/03605302.2015.1116560}.
\newblock URL \url{https://doi.org/10.1080/03605302.2015.1116560}.

\bibitem[Streets(2016{\natexlab{b}})]{s:globallygenerated}
J.~Streets.
\newblock Pluriclosed flow on manifolds with globally generated bundles.
\newblock \emph{Complex Manifolds}, 3\penalty0 (1), 2016{\natexlab{b}}.

\bibitem[Streets(2018)]{s:pcfongkwithsplit}
J.~Streets.
\newblock Pluriclosed flow on generalized {K}\"ahler manifolds with split tangent bundle.
\newblock \emph{J. Reine Angew. Math.}, 2018\penalty0 (739):\penalty0 241 -- 276, 2018.

\bibitem[Streets and Tian(2010)]{st:pluriclosed}
J.~Streets and G.~Tian.
\newblock A parabolic flow of pluriclosed metrics.
\newblock \emph{Int. Math. Res. Not.}, 2010\penalty0 (16):\penalty0 3101 -- 3133, 2010.

\bibitem[Streets and Tian(2012)]{STREETS2012366}
J.~Streets and G.~Tian.
\newblock Generalized {K}\"ahler geometry and the pluriclosed flow.
\newblock \emph{Nuclear Phys. B}, 858\penalty0 (2):\penalty0 366--376, 2012.
\newblock ISSN 0550-3213.
\newblock \doi{https://doi.org/10.1016/j.nuclphysb.2012.01.008}.
\newblock URL \url{https://www.sciencedirect.com/science/article/pii/S0550321312000211}.

\bibitem[Streets and Ustinovskiy(2021)]{su:gkrsolitons}
J.~Streets and Y.~Ustinovskiy.
\newblock Classification of generalized {K}{\"a}hler-{R}icci solitons on complex surfaces.
\newblock \emph{Comm. Pure Appl. Math.}, 74\penalty0 (9):\penalty0 1896--1914, 2021.
\newblock \doi{https://doi.org/10.1002/cpa.21947}.
\newblock URL \url{https://onlinelibrary.wiley.com/doi/abs/10.1002/cpa.21947}.

\bibitem[Streets and Warren(2016)]{sw:evanskrylov}
J.~Streets and M.~Warren.
\newblock Evans-{K}rylov estimates for a nonconvex {M}onge-{A}mp\`ere equation.
\newblock \emph{Math. Ann.}, 365:\penalty0 805 -- 834, 2016.

\bibitem[Tosatti and Weinkove(2010{\natexlab{a}})]{TW:MAonHerm}
V.~Tosatti and B.~Weinkove.
\newblock The complex {M}onge-{A}mp\`ere equation on compact {H}ermitian manifolds.
\newblock \emph{J. Amer. Math. Soc.}, 23:\penalty0 1187--1195, 2010{\natexlab{a}}.
\newblock URL \url{https://doi.org/10.1090/S0894-0347-2010-00673-X}.

\bibitem[Tosatti and Weinkove(2010{\natexlab{b}})]{TWBalancedandHerm}
V.~Tosatti and B.~Weinkove.
\newblock {Estimates for the Complex Monge-Ampère Equation on Hermitian and Balanced Manifolds}.
\newblock \emph{Asian J. Math.}, 14\penalty0 (1):\penalty0 19 -- 40, 2010{\natexlab{b}}.

\bibitem[Tosatti and Weinkove(2017)]{TW:MAforpluri}
V.~Tosatti and B.~Weinkove.
\newblock The {M}onge-{A}mp\`ere equation for $(n-1)$-plurisubharmonic functions on a compact {K}\"ahler manifold.
\newblock \emph{J. Amer. Math. Soc.}, 30:\penalty0 311--346, 2017.
\newblock URL \url{https://doi.org/10.1090/jams/875}.

\bibitem[Tosatti and Weinkove(2019)]{TW:HermformsMA}
V.~Tosatti and B.~Weinkove.
\newblock {H}ermitian metrics, $(n-1,n-1)$-forms, and {M}onge-{A}mp\`ere equations.
\newblock \emph{J. Reine Angew. Math.}, 755:\penalty0 67--101, 2019.
\newblock URL \url{https://doi.org/10.1515/crelle-2017-0017}.

\bibitem[Tosatti and Weinkove(2022)]{TosattiWeinkoveCRF}
V.~Tosatti and B.~Weinkove.
\newblock The {C}hern-{R}icci flow.
\newblock \emph{Atti Accad. Naz. Lincei Cl. Sci. Fis. Mat. Natur.}, 33\penalty0 (1):\penalty0 73--107, 2022.

\bibitem[Tricerri(1982)]{MR0706055}
F.~Tricerri.
\newblock Some examples of locally conformal {K}\"{a}hler manifolds.
\newblock \emph{Rend. Sem. Mat. Univ. Politec. Torino}, 40\penalty0 (1):\penalty0 81--92, 1982.
\newblock ISSN 0373-1243.

\bibitem[Wang et~al.(2016)Wang, Yang, and Zheng]{Wang2016OnBF}
Q.~Wang, B.~Yang, and F.~Zheng.
\newblock On {B}ismut flat manifolds.
\newblock \emph{Trans. Amer. Math. Soc.}, 2016.
\newblock URL \url{https://api.semanticscholar.org/CorpusID:55359054}.

\bibitem[Yau(1978)]{yau:cyproof}
S.-T. Yau.
\newblock On the {R}icci curvature of a compact {K}\"ahler manifold and the complex {M}onge-{A}mp\`ere equation, {I}.
\newblock \emph{Comm. Pure Appl. Math.}, 31\penalty0 (3):\penalty0 339 -- 411, 1978.

\bibitem[Ye(2023)]{Ye2022BismutEM}
Y.~Ye.
\newblock Bismut {E}instein metrics on compact complex manifolds, 2023.

\bibitem[Zheng(2019)]{Zhengsurvey}
F.~Zheng.
\newblock Some recent progress in non-{K}\"ahler geometry.
\newblock \emph{Sci. China Math.}, 62, 2019.
\newblock URL \url{https://doi.org/10.1007/s11425-019-9528-1}.

\end{thebibliography}

\end{document}